\documentclass[12pt]{article}%
   
\usepackage{amsmath,enumerate}
\usepackage{amsfonts}
\usepackage{amssymb}

\setlength{\topmargin}{-.5in}
\setlength{\textheight}{9in}
\setlength{\oddsidemargin}{.125in}
\setlength{\textwidth}{6.25in} 

\setcounter{MaxMatrixCols}{30}
\newtheorem{theorem}{Theorem}[section]

\newtheorem{conjecture}[theorem]{Conjecture}
\newtheorem{corollary}[theorem]{Corollary}

\newtheorem{lemma}[theorem]{Lemma}

\newtheorem{problem}[theorem]{Problem}

\newenvironment{proof}[1][Proof]{\noindent\textbf{#1.} }
{\hfill \ \rule{0.5em}{0.5em}}

\newcommand{\integers}{\mathbb{Z}}

\newcommand{\ind}{1\hspace{-2.6mm}{1}} 

\begin{document}

\title{Upper and lower bounds on $B_k^+$-sets}

\author{Craig M.~Timmons\thanks{Department of Mathematics, University of California San Diego, La Jolla, CA 92093, ctimmons@ucsd.edu.}}

\maketitle

\begin{abstract}
Let $G$ be an abelian group.  
A set $A \subset G$ is a \emph{$B_k^+$-set} if whenever $a_1 + \dots + a_k = b_1 + \dots + b_k$ with $a_i , b_j \in A$ 
there is an $i$ and a $j$ such that $a_i = b_j$.  If $A$ is a $B_k$-set then it is also a $B_k^+$-set but the converse is not 
true in general.  Determining the largest size of a $B_k$-set 
in the interval $\{1, 2, \dots , N \} \subset \integers$ or in the cyclic group $\integers_N$ is a well studied problem.  In this paper 
we investigate the corresponding problem for $B_k^+$-sets.  We prove non-trivial upper bounds on the maximum size of a 
$B_k^+$-set contained in the interval $\{1, 2, \dots , N \}$.  
For odd $k \geq 3$, we construct $B_k^+$-sets that have more elements than the $B_k$-sets constructed by Bose and Chowla.  
We prove a $B_3^+$-set $A \subset \integers_N$ has at most $(1 + o(1))(8N)^{1/3}$ elements.  Finally we obtain new upper 
bounds on the maximum size of a $B_k^*$-set $A \subset \{1,2, \dots , N \}$, a problem first investigated by Ruzsa.          
\end{abstract}


\section{Introduction}


Let $G$ be an abelian group.  A set $A \subset G$ is a \emph{$B_{k}^{+}$-set} if   
\begin{equation}\label{intro eq 1}
a_1 + \dots + a_k = b_1 + \dots + b_k ~~ \textrm{with} ~~ a_1, \dots , a_k,b_1, \dots , b_k \in A
\end{equation}
implies $a_i = b_j$ for some $i$ and $j$.  $A$ is a \emph{$B_k$-set} if (\ref{intro eq 1}) implies $(a_1, \dots , a_k)$ is a permutation of $(b_1, \dots , b_k)$.  If $A$ is a $B_k$-set then $A$ is also a $B_{k}^{+}$-set but in general the converse is not true.  $B_2$-sets are often called \emph{Sidon sets} and have received much attention since they were first studied by Erd\H{o}s and Tur\'{a}n \cite{ET} in 1941.    
Let $F_k (N)$ be the maximum size of a $B_k$-set $A \subset [N]$ and let 
$C_k (N)$ be the maximum size of a $B_k$-set $A \subset \integers_N$.  If $A \subset \integers_N$ is a $B_k$-set then 
$A$ is also a $B_k$-set when viewed as a subset of $\integers$ so for any $k \geq 2$, $C_k (N) \leq F_k (N)$.

Erd\H{o}s and Tur\'{a}n proved $F_2(N) \leq N^{1/2} + O(N^{1/4})$.  Their argument was used by Lindstr\"{o}m \cite{L} to show $F_2(N) \leq N^{1/2} + N^{1/4} +1$.  In 2010, Cilleruelo \cite{C1} obtained $F_2(N) \leq N^{1/2} + N^{1/4} + \frac{1}{2}$ as a consequence of a more general result and this is the best known upper bound on $F_2(N)$.  By counting differences 
$a - b$ with $a \neq b$, it is easy to prove $C_2(N) \leq \sqrt{N} +1$.  There are several constructions of dense $B_2$-sets (see \cite{S}, \cite{BC}, \cite{R}) that show $C_2(N) \geq N^{1/2}$ for infinitely many $N$.  
It follows that $F_2 (N) \sim \sqrt{N}$ and $\limsup \frac{C_2(N)}{ \sqrt{N} } = 1$.    

For $k \geq 3$, bounds on $F_k(N)$ and $C_k(N)$ are not as precise.  For each $k \geq 2$ and prime power $q$, Bose and Chowla \cite{BC} constructed a $B_k$-set $A \subset \integers_{q^k-1}$ with $|A| = q$ so that  
\begin{equation*}
(1 + o(1)) N^{1/k}  \leq F_k (N).
\end{equation*}
The current upper bounds on $F_k(N)$ and $C_k(N)$ do not match this lower bound for any $k \geq 3$.  If $A \subset [N]$ is a $B_k$-set then each $k$-multiset in $A$ gives rise to a unique sum in $\{1, \dots , kN \}$ so ${ |A| + k - 1 \choose k } \leq kN$ hence $F_k(N) \leq ( k! \cdot k N )^{1/k}$.  Similar counting 
shows $C_k (N) \leq ( k! N)^{1/k}$.  By considering differences one can improve these bounds.  We illustrate this idea with an example that is relevant to our results.  Let $A \subset \integers_N$ be a $B_3$-set.  There are ${ |A| \choose 2}( |A| - 2)$ 
sums of the form $a_1 + a_2 - a_3$ where $a_1, a_2$, and $a_3$ are distinct elements of $A$.  
It is not hard to check that each $n \in \integers_N$ has at most one representation as $n = a_1 + a_2 - a_3$ with 
$\{a_1, a_2 \} \in A^{(2)}$ and $a_3 \in A \backslash \{a_1, a_2 \}$.  
 This implies ${ |A| \choose 2}( |A| -2) \leq N$ so $|A| \leq (2N)^{1/3}+2$.  In general, for any $k \geq 2$ 
\begin{equation}\label{trivial ck}
C_k(N) \leq  \left( \left\lfloor \frac{k}{2} \right\rfloor ! \left\lceil \frac{k}{2} \right\rceil ! N \right)^{1/k} + O_k (1)
\end{equation}  
and 
\begin{equation}\label{trivial fk}
F_k (N) \leq  \left(  \left\lfloor \frac{k}{2} \right\rfloor ! \left\lceil \frac{k}{2} \right\rceil ! \cdot k N \right)^{1/k}+O_k(1).
\end{equation}
These bounds were first obtained by Jia \cite{Jia} in the even case and Chen \cite{Chen} in the odd case.        
The best upper bounds on $F_k(N)$ are to due to Green \cite{G}.  For every $k \geq 2$, (\ref{trivial fk}) has been improved (see for example \cite{G} or \cite{C}) but there is no value of $k \geq 3$ for which (\ref{trivial ck}) has been improved.    
This is interesting since all of the constructions take place in cyclic groups and provide lower bounds on $C_k(N)$.    
For other bounds on $B_k$-sets the interested reader is referred to Green \cite{G}, Cilleruelo \cite{C}, O'Bryant's survey \cite{OB}, or the book of Halberstam and Roth \cite{HR}.   

Now we discuss $B_k^+$-sets.  Write $F_k^+ (N)$ for the maximum size of a $B_k^+$-set $A \subset [N]$ and 
$C_k^+(N)$ for the maximum size of a $B_k^+$-set $A \subset \integers_N$.  Ruzsa \cite{R} proved that a set $A \subset [N]$
with no solution to the equation $x_1 + \dots + x_k = y_1 + \dots + y_k$ in $2k$-distinct integers has at most 
$(1 + o(1))k^{2-1/k}N^{1/k}$ elements.  Call such a set a $B_k^*$-set and define $F_k^* (N)$ in the obvious way.  Any 
$B_k^+$-set is also a $B_k^*$-set so $F_k^+ (N) \leq F_k^* (N)$.  Using the constructions of Bose and Chowla \cite{BC} and Ruzsa's Theorem 5.1 of \cite{R}, we get for every $k \geq 3$
\begin{equation*}
(1 + o(1)) N^{1/k}  \leq F_k (N) \leq F_k^+ (N) \leq F_k^* (N) \leq (1 + o(1)) k^{2 - 1/k }N^{1/k}.
\end{equation*} 

In this paper we improve this upper bound on $F_k^+(N)$ and $F_k^* (N)$, and improve the lower bound 
on $F_k^+ (N)$ for all odd $k \geq 3$.  We also prove a non-trivial upper bound on $C_3^+(N)$.  

Our first result is a construction which shows that for any odd $k \geq 3$, there is a $B_k^+$-set in $[N]$ that has more elements than any known $B_k$-set contained in $[N]$.  

\begin{theorem}\label{lower bound theorem}  
For any prime power $q$ and odd integer $k \geq 3$, there is a $B_k^+$-set $A \subset \integers_{2 (q^k-1) }$ with $|A| = 2q$.  
\end{theorem}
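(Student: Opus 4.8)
The plan is to ``double'' the Bose--Chowla construction. Let $B \subset \integers_{q^k-1}$ be the $B_k$-set of size $q$ produced by Bose and Chowla, and regard its elements as the integer representatives in $\{0,1,\dots,q^k-2\}$. I would then take
\[
A = B \cup \bigl( B + (q^k-1) \bigr) \subset \integers_{2(q^k-1)},
\]
where the second set is $\{\, b + (q^k-1) : b \in B \,\}$. Since the two translates lie in the disjoint residue blocks $\{0,\dots,q^k-2\}$ and $\{q^k-1,\dots,2q^k-3\}$, we get $|A| = 2q$ at once, and the work is to verify the $B_k^+$ property.

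Suppose then that $a_1 + \dots + a_k = b_1 + \dots + b_k$ in $\integers_{2(q^k-1)}$ with every $a_i, b_j \in A$. I would write each element uniquely as $a_i = \beta_i + \epsilon_i(q^k-1)$ and $b_j = \gamma_j + \delta_j(q^k-1)$ with $\beta_i, \gamma_j \in B$ and $\epsilon_i, \delta_j \in \{0,1\}$, and reduce the relation modulo $q^k-1$. This gives $\sum_i \beta_i \equiv \sum_j \gamma_j \pmod{q^k-1}$, and since $B$ is a genuine $B_k$-set in $\integers_{q^k-1}$, the tuple $(\beta_1,\dots,\beta_k)$ is a permutation of $(\gamma_1,\dots,\gamma_k)$; in particular $\sum_i \beta_i = \sum_j \gamma_j$ as integers. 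Substituting this back into the original relation leaves $(q^k-1)\bigl(\sum_i \epsilon_i - \sum_j \delta_j\bigr) \equiv 0 \pmod{2(q^k-1)}$, i.e.\ $\sum_i \epsilon_i \equiv \sum_j \delta_j \pmod 2$.

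To finish, I would assume for contradiction that $a_i \neq b_j$ for all $i,j$. Fix $v \in B$ and let $m_v$ be its multiplicity in the common multiset $\{\beta_i\} = \{\gamma_j\}$; let $p_v$, respectively $q_v$, count the copies of $v$ occurring among the $a$'s, respectively the $b$'s, with label $1$. The disjointness assumption says $(v,1)$ cannot occur on both sides and $(v,0)$ cannot occur on both sides, and a one-line case check shows this forces $(p_v,q_v) \in \{(0,m_v),(m_v,0)\}$. Summing over all $v$ in the support then gives $\sum_i \epsilon_i + \sum_j \delta_j = \sum_v (p_v + q_v) = \sum_v m_v = k$. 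Combined with $\sum_i \epsilon_i \equiv \sum_j \delta_j \pmod 2$ this makes $k$ even, a contradiction. Hence some $a_i = b_j$, and $A$ is a $B_k^+$-set.

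I expect the parity step at the end to be the crux: it is the only place oddness of $k$ enters, and it must be, since for even $k$ the construction fails outright --- one may split the common multiset of $\beta$'s into two equal halves, assign the label $1$ to one half on the left side and to the complementary half on the right side, and obtain a solution with the two $k$-multisets entirely disjoint. So the strategy is really to extract exactly the two constraints $\sum_i \epsilon_i + \sum_j \delta_j = k$ and $\sum_i \epsilon_i \equiv \sum_j \delta_j \pmod 2$ and observe that they are incompatible precisely when $k$ is odd.
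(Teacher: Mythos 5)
Your construction is exactly the paper's (Lemma 2.1): take the Bose--Chowla $B_k$-set and its translate by $q^k-1$, reduce modulo $q^k-1$ to force the multisets of $B$-components to agree, and then use a parity argument on the $\{0,1\}$-labels to force a common element when $k$ is odd. The only cosmetic difference is in the last step: the paper sorts the two $k$-tuples to pair them index-by-index and observes that not all label-pairs can differ, while you run the same parity count via explicit multiplicities $m_v$; both are correct and amount to the same argument.
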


Using known results on densities of primes (see \cite{BHP} for example), Theorem~\ref{lower bound theorem} implies 
\begin{corollary}\label{lb cor}
For any integer $N \geq 1$ and any odd integer $k \geq 3$, 
\begin{equation*}
F_k^+ (N) \geq (1 + o(1)) 2^{1 - 1/k}N^{1/k}.
\end{equation*}
\end{corollary}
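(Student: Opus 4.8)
The plan is to deduce the bound from Theorem~\ref{lower bound theorem} together with a standard density estimate for the primes. First I would pass from the cyclic group to the interval. If $A \subset \integers_m$ is a $B_k^+$-set, then, reading its elements as the representatives lying in $\{1, 2, \dots, m\} = [m]$, $A$ is also a $B_k^+$-set in $[m]$: an integer identity $a_1 + \dots + a_k = b_1 + \dots + b_k$ among elements of $A$ reduces modulo $m$ to the congruence in the definition of a $B_k^+$-set, yielding indices $i, j$ with $a_i \equiv b_j \pmod m$, hence $a_i = b_j$. Since moreover $F_k^+(N)$ is nondecreasing in $N$ (a $B_k^+$-set in $[N]$ is also one in $[N']$ whenever $N' \geq N$), Theorem~\ref{lower bound theorem} gives
\begin{equation*}
F_k^+(N) \geq 2q \qquad \text{for every prime power } q \text{ with } 2(q^k - 1) \leq N .
\end{equation*}

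Next, for a large $N$ I would take $q = q(N)$ to be the largest prime power with $q^k \leq N/2 + 1$, equivalently $2(q^k - 1) \leq N$, so that $F_k^+(N) \geq 2 q(N)$, and then estimate $q(N)$. Here the input about primes enters: by the prime number theorem --- or by the sharper result of \cite{BHP} --- the largest prime not exceeding $x$ is $(1 + o(1)) x$ as $x \to \infty$. Since every prime is a prime power, applying this with $x = (N/2 + 1)^{1/k}$ gives
\begin{equation*}
q(N) \geq (1 + o(1)) (N/2 + 1)^{1/k} = (1 + o(1)) \, 2^{-1/k} N^{1/k} .
\end{equation*}

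Combining the two displays, $F_k^+(N) \geq 2 q(N) = (1 + o(1)) \, 2 \cdot 2^{-1/k} N^{1/k} = (1 + o(1)) \, 2^{1 - 1/k} N^{1/k}$, which is the asserted inequality; for bounded $N$ the statement is vacuous, the implied error term being permitted to be as large as one wishes there. I do not expect a genuine obstacle here --- this is the routine device for converting a construction available for infinitely many $q$ into a bound valid for all $N$. The only point needing a little care is to invoke a prime-gap estimate strong enough that the largest prime below $x$ is asymptotic to $x$, and for this the prime number theorem already suffices, so the full strength of \cite{BHP} is not actually required.
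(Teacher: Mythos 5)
Your argument is correct and is exactly the (unwritten) proof the paper intends: read the $B_k^+$-set of Theorem~\ref{lower bound theorem} as a subset of the interval $[2(q^k-1)]$, use monotonicity of $F_k^+$, and choose $q$ to be the largest prime with $2(q^k-1)\leq N$, whose size is pinned down by a prime density estimate. Your remark that the prime number theorem already suffices (since it implies the largest prime below $x$ is $(1-o(1))x$) and that the full strength of \cite{BHP} is not needed for this asymptotic is also accurate.
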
 

Green proved $F_3(N) \leq (1 + o(1)) (3.5N)^{1/3}$.  We will use a Bose-Chowla $B_3$-set to construct a $B_{3}^{+}$-set $A \subset [2q^3]$ with $|A| = 2q = (4 \cdot 2q^3)^{1/3}$.
Putting the two results together we see that $A$ is denser than any $B_3$-set in $[2q^3]$ for sufficiently large prime powers $q$.  Ruzsa calls a $B_2^*$-set a \emph{weak Sidon set} and proves that a weak Sidon set $A \subset [N]$ has at most $N^{1/2} + 4N^{1/4} + 11$ elements (see \cite{R}) from which we deduce $F_2(N) \sim F_2^* (N)$.  Our construction and Green's upper bound show that $F_3(N)$ and $F_3^*(N)$ are not asymptotically equal.         

The proof of Theorem~\ref{lower bound theorem} is based on a simple lemma, Lemma~\ref{lemma 1 lb}, which implies 
\begin{equation}\label{F_k}
2  C_k(N) \leq  C_k^+ (2N) ~ \mbox{for any odd} ~ k \geq 3.
\end{equation}
This inequality provides us with a method of estimating $C_k(N)$ by proving upper bounds on $C_k^+ (N)$ for odd $k$.  Our next 
theorem provides such an estimate when $k =3$.  

\begin{theorem}\label{ub 3}
(i) If $A \subset \integers_N$ is a $B_3^+$-set then 
\begin{equation*}
|A|  \leq (1 + o(1)) (8 N)^{1/3}.
\end{equation*}

\noindent
(ii) If $A \subset [N]$ is a $B_3^+$-set then 
\begin{equation*}
|A| \leq (1 + o(1))(18N)^{1/3}.
\end{equation*}

\noindent
(iii) If $A \subset [N]$ is a $B_4^+$-set then 
\begin{equation*}
|A| \leq (1 + o(1))(272N)^{1/4}.
\end{equation*}
\end{theorem}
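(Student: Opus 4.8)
The plan is to adapt the difference-counting argument sketched in the introduction for $B_3$-sets to the $B_k^+$ setting, where now we may only conclude that a coincidence $a_i = b_j$ occurs rather than a full permutation. Fix a $B_3^+$-set $A \subset \integers_N$ (part (i)); the cases $A \subset [N]$ will follow the same template with the ambient sum ranging over an interval rather than over $\integers_N$. The key object is the set of ``signed sums'' $a_1 + a_2 + a_3 - b_1 - b_2 - b_3$ built from two $3$-multisets of $A$ that share no common element. The $B_3^+$ condition says precisely that such a combination never vanishes, but to get a bound on $|A|$ I want to count representations of a fixed value, so instead I would look at expressions of the form $a_1 + a_2 - b_1 - b_2$ (a difference of two sums of pairs of distinct elements) and bound how many times each value $n \in \integers_N$ can arise. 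Suppose $a_1 + a_2 - b_1 - b_2 = a_1' + a_2' - b_1' - b_2'$ with the $\{a_1,a_2\}, \{b_1,b_2\}$ disjoint and likewise the primed pairs; rearranging gives $a_1 + a_2 + b_1' + b_2' = a_1' + a_2' + b_1 + b_2$, a $4$-term identity. Since $A$ is a $B_3^+$-set (hence we need to first upgrade, via a short argument, from a $B_3^+$ to something controlling $4$-term sums, or work directly with $k=3$ by introducing a free ``dummy'' element), we get forced coincidences among the two sides, and a careful case analysis on which elements coincide should show that the value $n$ determines the multiset $\{a_1,a_2,b_1,b_2\}$ up to a bounded number of choices, say $O(1)$ of them. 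Counting: there are roughly $\binom{|A|}{2}^2 \sim |A|^4/4$ ordered configurations (pairs of disjoint $2$-subsets), each landing in $\integers_N$, and each target hit $O(1)$ times, which would give $|A|^4 \leq (8 + o(1))N \cdot (\text{const})$; I must arrange the constants so that this reads $|A| \leq (1+o(1))(8N)^{1/3}$ — note the exponent $1/3$, not $1/4$, which signals that the correct count is over $3$-element rather than $4$-element configurations.

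Reconsidering the exponent: for $k=3$ the natural count, as in the introduction's $B_3$-set example, is over expressions $a_1 + a_2 - a_3$ with $a_1, a_2, a_3$ distinct, of which there are $\binom{|A|}{2}(|A|-2) \sim |A|^3/2$, each in $\integers_N$. For a genuine $B_3$-set each value is hit at most once, giving $|A|^3/2 \leq N$, i.e. $|A| \leq (2N)^{1/3}$. For a $B_3^+$-set the uniqueness fails, but only mildly: if $a_1 + a_2 - a_3 = a_1' + a_2' - a_3'$ then $a_1 + a_2 + a_3' = a_1' + a_2' + a_3$ is a $3 = 3$ identity, so some $a_i$ or $a_3'$ equals some $a_j'$ or $a_3$. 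The plan is to enumerate these coincidence patterns and show that, apart from $O(|A|^2)$ ``degenerate'' triples (which are negligible since $|A|^2 = o(|A|^3)$), each value $n$ has at most $4$ representations $a_1+a_2-a_3$ as above — the factor $4$ being exactly what turns $|A|^3/2 \leq 4N$ into $|A| \leq (8N)^{1/3}$. So the heart of part (i) is a finite case analysis: given two representations of $n$, write the resulting $3$-term identity, use the $B_3^+$-property to locate a shared element, substitute, and track how many free parameters remain; each surviving case should pin down the second representation from the first up to a choice among at most $4$ alternatives (roughly: which of the elements of the first representation plays the role of the shared element). I expect this case analysis, and in particular verifying that the multiplicity is exactly $4$ and not larger, to be the main obstacle — one has to be careful about representations where two of $a_1, a_2, a_3$ happen to be involved in the coincidence simultaneously, and about the boundary between ``degenerate'' configurations that are thrown away and ``generic'' ones that are counted.

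For part (ii), $A \subset [N]$, the same count applies but now $a_1 + a_2 - a_3$ ranges over the interval $(-N, 2N)$, a set of size $\approx 3N$ rather than $N$; with the same multiplicity bound of $4$ this yields $|A|^3/2 \leq 4 \cdot 3N$, i.e.\ $|A| \leq (24N)^{1/3}$ — slightly worse than the claimed $(18N)^{1/3}$, so a refinement is needed. The improvement should come from a standard Cilleruelo-type trick: shift and dilate, or restrict to a subinterval, or use that the differences $a_i - a_3$ cannot all be large simultaneously, to shave the effective range from $3N$ down to $(9/4)N$ (which gives $|A|^3/2 \leq 4 \cdot (9/4)N = 9N$, hence $|A| \leq (18N)^{1/3}$); alternatively one counts $a_1 + a_2 - a_3$ together with $a_3 + a_3' - a_1$-type companions to symmetrize the range. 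I would flag this as a second, minor obstacle: getting the constant $18$ rather than $24$ requires the extra idea, but it is routine once the multiplicity lemma from part (i) is in hand.

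For part (iii), $A \subset [N]$ a $B_4^+$-set, the argument moves up one level: count expressions $a_1 + a_2 - a_3 - a_4$ with all four elements of $A$ distinct, of which there are $\binom{|A|}{2}\binom{|A|-2}{2} \sim |A|^4/4$, each lying in the interval $(-2N, 2N)$ of size $\approx 4N$. Two equal such expressions give an identity $a_1 + a_2 + a_3' + a_4' = a_1' + a_2' + a_3 + a_4$, a $4 = 4$ sum, so the $B_4^+$-property forces a common element; the case analysis is longer than in part (i) but structurally identical, and I would aim to show the multiplicity is at most some constant $c$ with $c/4$ small enough that $|A|^4/4 \leq c \cdot 4N$ rearranges to $|A| \leq (272N)^{1/4}$, i.e.\ one needs $16c = 272$, so $c = 17$. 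Verifying multiplicity exactly $17$ — presumably $1$ generic term plus $16$ sporadic coincidence patterns, or a similar bookkeeping — is where the bulk of the work in part (iii) lies, and combined with a range-refinement analogous to part (ii) this should be the most delicate computation in the theorem; but no genuinely new idea beyond parts (i)–(ii) is required.
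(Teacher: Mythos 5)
Your proposal rests on a pointwise multiplicity bound that is false, and this is not a fixable gap inside the framework you set up; the paper's argument is structurally different precisely because such a bound is unavailable.

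Concretely, you claim that for a $B_3^+$-set, "apart from $O(|A|^2)$ degenerate triples, each value $n$ has at most $4$ representations $a_1+a_2-a_3$." Consider two representations $n = a - b + c = x - y + z$ and rearrange to $a + y + c = x + b + z$. The $B_3^+$ property does force an overlap, but the overlap $b = y$ is allowed: then $a + c = x + z$ with $\{a,c\} \cap \{x,z\} = \emptyset$, and this can happen many times. In the paper's notation, if $d$ is a "popular" sum with $|S(d)| = s$ disjoint pairs $\{a,c\}$ summing to $d$, and if $b := d - n$ happens to lie in $A$, then $f(n) \geq s - 1$. Since the pairs in a single $S(d)$ are pairwise disjoint, $s$ can be as large as $\lfloor |A|/2\rfloor$; moreover the construction in Theorem~\ref{lower bound theorem} produces a $B_3^+$-set in which about half of all pairs lie in some $S(d)$, so such large fibers genuinely occur. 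Thus $f$ is not bounded in $L^\infty$, and the inequality $|A|^3/2 \le 4N$ you want does not follow from a case analysis of coincidence patterns. What the paper controls is the second moment $\sum f(n)^2$: after decomposing the collision count into contributions from the overlap cases, it bounds $\sum_{c\in A} f(c)$ (not $\max_n f(n)$) via the structural Lemmas~\ref{disjoint}--\ref{step 2}, introduces the free parameter $\delta$ with $\sum_{c\in A} f(c) = \delta|A|^2$, and applies a variance-refined Cauchy--Schwarz (Lemma~\ref{cs}) to obtain $|A| \le (1+o(1))(2+10\delta-4\delta^2)^{1/3}N^{1/3}$, with the maximum $8$ attained near $\delta=1$. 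The factor $8$ is an optimization over an unknown density parameter, not a multiplicity.

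The same issue recurs in your parts (ii) and (iii). For (ii), the constant $18$ in the paper comes from the same $\delta$-argument with a tighter bound $\sum_{c\in A}f(c)\le |A|^2/2 + O(|A|)$ available over $[N]$ (fewer $3$-term a.p.'s, and Lemma~\ref{not too many}), not from shrinking the range from $3N$ to $\tfrac{9}{4}N$; you would not get $18$ by the route you sketch even granting your multiplicity claim. For (iii), the paper proves only $f(n)\le 2|A|$ (Lemma~\ref{l 1}), again linear in $|A|$ rather than constant, and the decomposition $272 = 16\cdot 17$ arises from $\sum f(n)^2 \le \sum f(n) + 2|A|\sum_{n\in A-A}f(n) \le \tfrac{|A|^4}{4} + 4|A|^4$ together with Cauchy--Schwarz over the length-$4N$ range; there is no constant "$17$" multiplicity bound to be found, and the crucial new ingredient you would need (and do not supply) is the restriction of the inner sum to $n \in A-A$, which uses that $A$ is a $B_2$-set (Lemma~\ref{l 0}).
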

   
Theorem~\ref{ub 3}(i) and (\ref{F_k}) imply
\begin{corollary}\label{C_3}
If $A \subset \integers_N$ is a $B_3$-set then 
\begin{equation*}
|A| \leq (1 + o(1)) ( 2 N)^{1/3}.
\end{equation*}
\end{corollary}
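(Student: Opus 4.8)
The plan is to obtain Corollary~\ref{C_3} as an immediate consequence of Theorem~\ref{ub 3}(i) together with the inequality (\ref{F_k}); no genuinely new argument is required beyond keeping track of the $o(1)$ terms. First I would reduce to bounding $C_3(N)$: let $A \subset \integers_N$ be a $B_3$-set of maximum size, so $|A| = C_3(N)$, and note that any $B_3$-set in $\integers_N$ satisfies $|A| \leq C_3(N)$, so it suffices to estimate $C_3(N)$.

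Next I would apply (\ref{F_k}) with $k = 3$ (which is the odd case to which it applies), giving $2\,C_3(N) \leq C_3^+(2N)$. Then I would invoke Theorem~\ref{ub 3}(i), but with the cyclic group $\integers_{2N}$ in place of $\integers_N$: every $B_3^+$-set in $\integers_{2N}$ has at most $(1+o(1))(8\cdot 2N)^{1/3} = (1+o(1))(16N)^{1/3}$ elements, and since $2N \to \infty$ exactly when $N \to \infty$, the error term is still $o(1)$ after the substitution. Hence $C_3^+(2N) \leq (1+o(1))(16N)^{1/3}$. Combining the two estimates gives
\[
C_3(N) \;\leq\; (1+o(1))\,\tfrac{1}{2}(16N)^{1/3} \;=\; (1+o(1))(2N)^{1/3},
\]
the last equality being the numerical identity $\tfrac12 \cdot 16^{1/3} = 2^{1/3}$. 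Since $|A| \leq C_3(N)$ for every $B_3$-set $A \subset \integers_N$, this is the claimed bound.

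The only point needing care — the "main obstacle," such as it is — is verifying that the $o(1)$ coming from Theorem~\ref{ub 3}(i) is unaffected by replacing $N$ with $2N$ and by multiplying through by the constant $\tfrac12$; both checks are routine. All of the real content sits in the proof of Theorem~\ref{ub 3}(i) and in Lemma~\ref{lemma 1 lb} (which yields (\ref{F_k})), so the corollary itself is just the combination of these two facts.
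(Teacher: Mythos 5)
Your proof is correct and follows exactly the route the paper takes: the paper derives Corollary~\ref{C_3} by combining inequality (\ref{F_k}) with Theorem~\ref{ub 3}(i), and your application of Theorem~\ref{ub 3}(i) to $\integers_{2N}$ together with the arithmetic $\tfrac{1}{2}(16N)^{1/3} = (2N)^{1/3}$ is precisely how the constants work out.
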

As shown above, there is a simpler argument that implies this bound.  The novelty here is that our results imply (\ref{trivial ck}) for $k =3$.  It is important to mention that the error term we obtain is larger than the error term in the bound $C_3(N) \leq (2N)^{1/3} +2$.  We feel that any improvement in the leading term of Theorem~\ref{ub 3}(i) or (\ref{trivial ck}) would be significant.        

For $k \geq 5$ we were able to improve the upper bound $F_k^+ (N) \leq (1  +o(1))k^{2 - 1/k}N^{1/k}$ by modifying arguments of Ruzsa.  Our method applies to $B_k^*$-sets and as a consequence we improve the upper bound on $F_k^* (N)$ for all $k \geq 3$.  We state our result only for $k=3$ and for large $k$.  For other small values of $k$ the reader is referred to Table 1 in Section 6.  

\begin{theorem}\label{upper bounds}
If $A \subset [N]$ is a $B_3^*$-set then 
\begin{equation*}
|A| \leq (1 + o(1))( 162N)^{1/3}.
\end{equation*}
If $A \subset [N]$ is a $B_k^*$-set then 
\begin{equation*}
|A| \leq \left( \frac{1}{4} + \epsilon(k) \right)k^2 N^{1/k}
\end{equation*}
where $\epsilon (k) \rightarrow 0$ as $k \rightarrow \infty$.  
\end{theorem}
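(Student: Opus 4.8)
The plan is to push Ruzsa's argument for $B_k^*$-sets (Theorem~5.1 of \cite{R}) through a \emph{balanced head--tail split} of each $k$-term sum, which trades the ambient sum condition for a difference condition and supplies the improvement over $k^{2-1/k}$. Let $A\subset[N]$ be a $B_k^*$-set, put $n=|A|$, $p=\lceil k/2\rceil$, $q=\lfloor k/2\rfloor$, and attach to each increasing $k$-tuple $a_1<\dots<a_k$ from $A$ the signed sum $\sigma(a_1,\dots,a_k)=a_1+\dots+a_p-a_{p+1}-\dots-a_k$. An elementary estimate shows that these values lie in an interval of length $(1+o(1))\tfrac k2 N$ (for $k=3$ the quantities $a_1+a_2-a_3$ essentially fill $(-N,N)$). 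The first step is a collision lemma: if two distinct increasing $k$-tuples $\mathbf a\neq\mathbf b$ have $\sigma(\mathbf a)=\sigma(\mathbf b)$, then moving the negative terms across gives a $k$-versus-$k$ sum equation whose $2k$ terms are the positive part of $\mathbf a$, the negative part of $\mathbf b$, the positive part of $\mathbf b$ and the negative part of $\mathbf a$; were these $2k$ elements all distinct this would contradict the $B_k^*$ property, so a collision forces a coincidence among them, that is, a restricted overlap pattern between $\mathbf a$ and $\mathbf b$. In particular, disjoint $k$-subsets of $A$ always have distinct signed sums.

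Granting the collision lemma, I carry out a double count in the spirit of the proofs of (\ref{trivial ck}) and (\ref{trivial fk}), but under this weaker hypothesis. On one side $A$ has $\binom nk$ increasing $k$-tuples; on the other they map into only $(1+o(1))\tfrac k2 N$ values of $\sigma$, and the collision lemma, combined with the removal/sunflower bookkeeping described below, controls how much this map can compress. The balanced split enters exactly as in (\ref{trivial ck}): it is a block of size $q=\lfloor k/2\rfloor$ that gets transposed, so the relevant ``free'' factor is $\lfloor k/2\rfloor!\,\lceil k/2\rceil!$ rather than the $(k-1)!$ coming from an unbalanced split, and because the split bears on both multisets of the defining equation this gain compounds to an improvement of roughly $4^{k}$ in the governing loss factor over what Ruzsa's argument produces. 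Writing the outcome as $\binom nk\le(1+o(1))\,L_k\cdot\tfrac k2 N$, expanding $\binom nk\sim n^k/k!$ and taking $k$-th roots --- using $(k!)^{1/k}\sim k/e$ and the estimate for $L_k$ above --- the leading term is $\tfrac14 k^2 N^{1/k}$, with $\epsilon(k)$ absorbing the $o(1)$ window error and the lower-order ($k$-dependent) factors, each of which I check tends to the stated limit. For $k=3$ I rerun the argument keeping every constant exact --- the split is $a_1+a_2-a_3$, the window has length $\sim 2N$, and $\lfloor k/2\rfloor!\,\lceil k/2\rceil!=2$ --- and carry the overlap contribution precisely to reach $\binom n3\le(1+o(1))\,27N$, i.e.\ $|A|\le(1+o(1))(162N)^{1/3}$.

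The main obstacle is estimating the loss factor $L_k$, i.e.\ bounding in aggregate how many increasing $k$-tuples can share a single value of $\sigma$. Unlike for $B_k$-sets there is no $k!$-type bound: a $B_k^*$-set may have many tuples with a common $\sigma$-value, as long as their overlap patterns keep defeating the $2k$-distinctness the hypothesis requires. I would imitate Ruzsa's removal scheme: if some value of $\sigma$ is taken by more than a Deza--Frankl-type threshold of tuples, a sunflower argument yields an element lying in all of them, which may be deleted; iterating, one peels the configuration down to shorter disjoint equal-sum blocks, and then observes that two disjoint equal-sum $j$-subsets with $j\ge 2$ --- which must exist elsewhere in $A$ once $n$ is large --- can be spliced back in to reconstitute a forbidden $2k$-distinct solution, so the overlap is severely constrained unless $n$ is already small. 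Bounding the number of deletions quantitatively, and handling the stubborn case in which the shared elements sit in opposite-sign slots of $\mathbf a$ and $\mathbf b$ (so that no single common element need appear), is the delicate part and the place where the exact constant $162$ when $k=3$, and the limiting value $\tfrac14$, are pinned down.
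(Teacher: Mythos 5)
Your proposal and the paper take genuinely different routes, but your route has a gap at its load-bearing step. The paper works with the Fourier quantities $\sum\sigma_k(n)^2=\int_0^1|f(t)|^{2k}dt$: it proves (via Lemma~\ref{1.1 lemma 2}) that a $B_k^*$-set has a large subset that is a $B_{\lceil k/2\rceil}^*$-set, feeds this into H\"{o}lder's inequality to get the recursion $c_k^*\leq k^k c_{k/2}^*$ (even) and $c_k^*\leq\max\{k^{k+1}c_l^*,k^{k-1}c_{l+1}^*\}$ (odd $k=2l+1$), seeds the recursion with a hand-computed $c_3^*\leq 54$, and finally closes the loop with the Cauchy--Schwarz bound $|A|^{2k}/(kN)\leq\sum\sigma_k(n)^2$. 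The asymptotic constant $\tfrac14$ emerges from unwinding the recursion roughly $\log_2 k$ times, with the product $\prod(2^{-i})^{2^{-i}}\to\tfrac14$ doing the work. Your approach instead tries to count $k$-subsets directly against the window of values of a balanced signed sum $\sigma$.

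The gap is exactly where you flag it: the loss factor $L_k$. For a $B_k$-set, injectivity of the balanced signed-sum map gives (\ref{trivial ck}) at once. For a $B_k^*$-set the map is far from injective, and the number of $k$-subsets sharing a $\sigma$-value is not controlled by your collision lemma alone --- the collision lemma only rules out \emph{disjoint} colliding $k$-subsets. The ``sunflower/removal'' sketch does not close this: the sunflower lemma gives you many sets sharing a common core, not an element common to all of them, and deleting elements of $A$ changes the ambient set in a way you have not tracked; worse, the step ``two disjoint equal-sum $j$-subsets must exist elsewhere in $A$'' is the \emph{negation} of the $B_j^*$ property the paper's Lemma~\ref{1.1 lemma 2} shows $A$ (minus $O(k)$ elements) enjoys, so you cannot invoke it to splice anything back in. Your stated $k=3$ target, $\binom{n}{3}\leq(1+o(1))27N$, is simply asserted; the paper's derivation of the corresponding constant (via $c_3^*\leq 54$) needs the full structural analysis of the overlap graph --- the sets $P_i^2$, $Q_i^1$, Lemma~\ref{1.1 lemma 8}, Corollary~\ref{star cor}, and the constrained optimization of $\delta_1+4\delta_2+9\delta_3+\tfrac14(1-\sqrt{6\delta_3})^2$ --- and nothing in your sketch substitutes for that. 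Likewise, the claimed compound gain of $4^k$ in $L_k$ over Ruzsa is precisely what would need to be proved, not just asserted; the paper gets its $\tfrac14$ by a recursion that sidesteps having to estimate any single-level collision count at all. Until $L_k$ is actually bounded, both the constant $162$ and the limit $\tfrac14$ remain unproven in your write-up.
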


Our results do not rule out the possibility of $F_k^+(N)$ being asymptotic to $F_k^*(N)$.

\begin{problem}\label{problem 1}
Determine whether or not $F_k^+(N)$ is asymptotic to $F_k^*(N)$ for $k \geq 3$.  
\end{problem}

If $A \subset [N]$ is a $B_k^*$-set then the number of solutions to $2x_1 + x_2 + \dots + x_{k-1} = y_1 + \dots + y_k$ 
with $x_i,y_j \in A$ is $o( |A|^k)$ (see \cite{R}).  A $B_k^*$-set allows solutions to this equation 
with $x_1, \dots , x_{k-1}, y_1 , \dots , y_k$ all distinct but such a solution cannot occur in a $B_k^+$-set.  If it were true that 
$F_k^+(N)$ is asymptotic to $F_k^*(N)$ then this would confirm the belief that it is the sums of $k$ distinct elements of $A$ that control the size of $A$ and the lower order sums should not matter.  Jia \cite{Jia} defines a \emph{semi-$B_k$-set} to be a set $A$
with the property that all sums of $k$ distinct elements from $A$ are distinct.  He states that Erd\H{o}s conjectured \cite{E3} that a 
semi-$B_k$-set $A \subset [N]$ should satisfy $|A| \leq (1 + o(1))N^{1/k}$.  A positive answer to Problem~\ref{problem 1} would be evidence in favor of this conjecture.     

At this time we do not know how to construct $B_{2k}^{+}$-sets or $B_{2k}^{*}$-sets for any $k \geq 2$ that are bigger than the corresponding Bose-Chowla $B_{2k}$-sets.  We were able to construct interesting  $B_4^+$-sets in the non-abelian setting.

Let $G$ be a non-abelian group.  A set $A \subset G$ is a \emph{non-abelian $B_k$-set} if 
\begin{equation}\label{intro eq 2}
a_1 a_2 \cdots  a_k = b_1  b_2  \cdots  b_k ~~ \textrm{with} ~~ a_i,b_j \in A
\end{equation}                     
implies $a_1 = b_1, a_2 = b_2, \dots , a_k = b_k$.  If $A \subset G$ is a non-abelian $B_k$-set then every $k$-letter word in $|A|$ is different so $|A|^k \leq |G|$.  Odlyzko and Smith \cite{OS} proved that there exists infinitely many groups $G$ such that 
$G$ has a non-abelian $B_4$-set $A \subset G$ with $|A| = (1+o(1)) \left( \frac{|G|}{1024}  \right)^{1/4}$.  They actually prove something more general that gives constructions of non-abelian $B_k$-sets for all $k \geq 2$ but this is the only result that we need.  We define a 
\emph{non-abelian $B_k^+$-set} to be a set $A \subset G$ such that (\ref{intro eq 2}) implies $a_i = b_i$ for some 
$ i \in \{1,2, \dots , k \}$.  As in the abelian setting, a non-abelian $B_k$-set is also a non-abelian $B_k^+$-set but the converse is not true in general.  Using a construction 
of \cite{OS}, we prove

\begin{theorem}\label{lb 4}
For any prime $p$ with $p-1$ divisible by 4, there is a non-abelian group $G$ of order $48(p^4 - 1)$ that contains a 
non-abelian $B_4^+$-set $A \subset G$ with 
\begin{equation*}
|A| = \frac{1}{2} (p-1).
\end{equation*}
\end{theorem}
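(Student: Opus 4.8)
The plan is to adapt the doubling construction behind Theorem~\ref{lower bound theorem} and Lemma~\ref{lemma 1 lb} to the non-abelian $B_4$-set of Odlyzko and Smith \cite{OS}. In the abelian setting that argument sends a $B_k$-set $B \subset \integers_N$ to the $B_k^+$-set $B \cup (B+N) \subset \integers_{2N}$, and works precisely because $k$ is odd; for even $k$ it breaks down, and one cannot fix it by adjoining a central involution. Indeed, if $B_0$ is \emph{any} non-abelian $B_4$-set with $|B_0| \ge 4$ and $z$ is central of order $2$, choose distinct $\alpha_1,\alpha_2,\alpha_3,\alpha_4 \in B_0$ and put
\[
a_1 = \alpha_1 z,\quad a_2=\alpha_2,\quad a_3=\alpha_3,\quad a_4=\alpha_4,\qquad b_1=\alpha_1,\quad b_2=\alpha_2 z,\quad b_3=\alpha_3 z,\quad b_4=\alpha_4 z;
\]
then $a_1a_2a_3a_4 = \alpha_1\alpha_2\alpha_3\alpha_4 z = b_1b_2b_3b_4$ with no $a_i$ equal to any $b_j$. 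So the doubling for $k=4$ must take place inside a genuinely non-abelian extension, which is exactly what makes non-abelian groups relevant.

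I would start from the Odlyzko--Smith group $G_0$ of order $24(p^4-1)$ built from $\mathbb{F}_{p^4}$, together with their non-abelian $B_4$-set $B_0\subset G_0$ with $|B_0| = \tfrac14(p-1)$. The Galois group of $\mathbb{F}_{p^4}/\mathbb{F}_p$ has a unique element of order $2$, namely $x\mapsto x^{p^2}$, and I would use the automorphism $\tau$ of $G_0$ it induces to form the semidirect product $G = G_0 \rtimes_\tau \langle z\rangle$ with $z^2 = 1$ and $zgz^{-1} = \tau(g)$. Then $|G| = 48(p^4-1)$ and $G$ is non-abelian, and I set $A = B_0 \cup B_0 z$; since $B_0$ and $B_0 z$ lie in different cosets of $G_0$, $|A| = 2|B_0| = \tfrac12(p-1)$.

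To check that $A$ is a non-abelian $B_4^+$-set, suppose $a_1a_2a_3a_4 = b_1b_2b_3b_4$ with $a_i,b_j \in A$, and write $a_i = \alpha_i z^{\epsilon_i}$, $b_j = \beta_j z^{\delta_j}$ with $\alpha_i,\beta_j \in B_0$ and $\epsilon_i,\delta_j \in \{0,1\}$. Collecting the $z$'s on the right turns the equation into the $\integers_2$-relation $\epsilon_1+\epsilon_2+\epsilon_3+\epsilon_4 \equiv \delta_1+\delta_2+\delta_3+\delta_4 \pmod 2$ together with the $G_0$-relation
\[
\tau^{s_1}(\alpha_1)\tau^{s_2}(\alpha_2)\tau^{s_3}(\alpha_3)\tau^{s_4}(\alpha_4) = \tau^{t_1}(\beta_1)\tau^{t_2}(\beta_2)\tau^{t_3}(\beta_3)\tau^{t_4}(\beta_4),
\]
where $s_i = \epsilon_1+\dots+\epsilon_{i-1}$ and $t_j = \delta_1+\dots+\delta_{j-1}$, so $s_1=t_1=0$. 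All eight factors lie in $B_0\cup\tau(B_0)$, so — assuming, as I would aim to prove, that $B_0\cup\tau(B_0)$ is still a non-abelian $B_4$-set — the factors match positionwise; since $B_0\cap\tau(B_0)=\emptyset$ this forces $\alpha_i=\beta_i$ and $s_i\equiv t_i\pmod 2$ for each $i$. Reading $s_i\equiv t_i$ for $i=2,3,4$ gives $\epsilon_i\equiv\delta_i$ for $i=1,2,3$, and the $\integers_2$-relation then forces $\epsilon_4\equiv\delta_4$; hence $\epsilon_i=\delta_i$ for all $i$ and $a_i=b_i$. The non-triviality of $\tau$ is exactly what defeats the ``shifted'' configurations (like the one above) that survive for even $k$ in the abelian or central-extension settings: the $G_0$-coordinates now record the order in which the factors $z$ occur.

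The step I expect to be the main obstacle is the compatibility of $\tau$ with $B_0$ inside the explicit Odlyzko--Smith group: one must choose $\tau$ so that $B_0$ and $\tau(B_0)$ are disjoint and so that $B_0\cup\tau(B_0)$ inherits enough of the $B_4$-structure of $B_0$ to make the positionwise matching above valid. Verifying this means unwinding the field-theoretic description of $B_0$ (multiplicative cosets, norms, and the Galois action on $\mathbb{F}_{p^4}$), and it is here that the hypothesis $4\mid p-1$ is used; if instead $B_0\cup\tau(B_0)$ turns out to be only a non-abelian $B_4^+$-set, then the $G_0$-relation only yields $\alpha_i=\beta_j$ for a single pair $i,j$ and one must work harder to also pin down $\epsilon_i = \delta_j$. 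Once the matching is established, the remaining parity bookkeeping over the $2^4$ label patterns is routine.
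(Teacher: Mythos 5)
Your opening observation is correct and is the right diagnostic: a central extension of the Odlyzko--Smith group $G_1$ by $\integers_2$ cannot work, and the shifted configuration you exhibit shows exactly why the odd-$k$ trick of Lemma~\ref{lemma 1 lb} fails at $k=4$. But the construction you then propose has a genuine gap that you yourself flag as ``the main obstacle'': the entire argument rests on the assertion that $B_0 \cup \tau(B_0)$, for the Galois involution $\tau: x \mapsto x^{p^2}$, remains a non-abelian $B_4$-set (or at least supports the positionwise matching you need). This is not a cleanup step; it is the whole content of the theorem, since without it the $G_0$-equation gives no information about the $\epsilon_i,\delta_j$. You offer no argument for it, and it is far from obvious from the field-theoretic description of $B_0$. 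There is also a numerical slip: the paper cites the Odlyzko--Smith group as having order $4(p^4-1)$, not $24(p^4-1)$, so your semidirect product with $\integers_2$ would give order $8(p^4-1)$, not the $48(p^4-1)$ claimed.

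The paper takes a completely different and much lighter route. Instead of twisting $G_1$ by an automorphism, it proves a clean closure lemma (Lemma~\ref{lemma 5.1}): if $A \subset G$ is a non-abelian $B_k$-set and $B \subset H$ is a non-abelian $B_k^+$-set, then $A \times B$ is a non-abelian $B_k^+$-set in $G \times H$. It then exhibits a fixed group $H$ of order $12$ (upper triangular $2\times 2$ matrices over $\mathbb{F}_4$) together with a two-element non-abelian $B_4^+$-set $\{\alpha,\beta\} \subset H$, verified by a short finite case check using the relations $\alpha^3=\beta^3=\mathrm{id}$ and $\alpha^2\beta=\beta^2\alpha$. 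Taking $G = G_1 \times H$ and $A = A_1 \times \{\alpha,\beta\}$ gives $|G| = 4(p^4-1)\cdot 12 = 48(p^4-1)$ and $|A| = 2\cdot\tfrac14(p-1) = \tfrac12(p-1)$. The doubling thus happens entirely in the new, small coordinate, which sidesteps any need to understand the Galois action on $B_0$; the only arithmetic input is the Odlyzko--Smith theorem used as a black box. If you want to salvage your approach, you would need to actually prove the $B_4$-property of $B_0\cup\tau(B_0)$, which is likely to be substantially harder than the finite verification the paper does in $H$; alternatively, note that your plan is structurally equivalent to the paper's if you replace ``semidirect product with $\tau$'' by ``direct product with a small group carrying a $B_4^+$-pair,'' and the paper's Lemma~\ref{lemma 5.1} is exactly the tool that makes that painless.
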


Our result shows that there are infinitely many groups $G$ such that $G$ has a non-abelian $B_4^+$-set $A$ with $|A| = \left( \frac{|G|}{768}  \right)^{1/4} + o( |G|^{1/4} )$.  We conclude our introduction with the following conjecture concerning $B_{2k}^+$-sets.

\begin{conjecture}\label{conjecture}
If $k \geq 4$ is any even integer then there exists a positive constant $c_k$ such that for infinitely many $N$,  
\begin{equation*}
F_{k}^{+}(N) \geq (1 + c_k +o(1) ) N^{1/k}.
\end{equation*}
\end{conjecture}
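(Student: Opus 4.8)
Since this is stated as a conjecture I will describe the natural lines of attack and the barrier that, I believe, is the real point. The mechanism behind the odd-$k$ result (Lemma~\ref{lemma 1 lb}, Corollary~\ref{lb cor}) can be restated as follows: if $C$ is a subset of a finite abelian group $H$ such that $0$ cannot be written as a sum of $k$ nonzero elements of $C-C$, and $A\subset\integers_m$ is a $B_k$-set with $|A|\ge k$, then $C\times A\subset H\times\integers_m$ is a $B_k^+$-set. (Indeed, a sum identity $\sum_i(c_i,a_i)=\sum_j(d_j,b_j)$ projects in the second coordinate to a $B_k$-identity, hence there is a bijection $\pi$ with $a_i=b_{\pi(i)}$; the first coordinate then reads $\sum_i(c_i-d_{\pi(i)})=0$ with every summand in $C-C$, so some $c_i=d_{\pi(i)}$, whence $(c_i,a_i)=(d_{\pi(i)},b_{\pi(i)})$.) Taking $C=\{0,1\}\subset\integers_2$ satisfies the hypothesis exactly when $k$ is odd, and --- using $\integers_2\times\integers_m\cong\integers_{2m}$ for odd $m$ --- this recovers Corollary~\ref{lb cor}. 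For even $k$ the route is closed: whenever $|C|\ge 2$, choosing $x\ne y$ in $C$ gives $(x-y)+(y-x)+\dots+(x-y)+(y-x)=0$ with $k$ nonzero summands from $C-C$, and one checks directly that $C\times A$ is then not a $B_k^+$-set. The same alternating identity obstructs \emph{every} construction in which two or more copies of a Bose--Chowla type set occupy independent coordinates; getting around this ``parity barrier'' is what a proof of the conjecture must do.

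A $B_k^+$-set in a cyclic group $\integers_n$ is automatically a $B_k^+$-set of integers in $[n]$, so it suffices to produce, for each even $k$, a cyclic group $\integers_n$ carrying a $B_k^+$-set of size $(1+c_k+o(1))n^{1/k}$; by the previous paragraph this group cannot split, up to isomorphism, as a product of a $B_k$-set with a small cyclic factor, so the two copies must be genuinely \emph{entangled}. For $k=4$ a concrete candidate is a \emph{twisted doubling}: fix a Bose--Chowla $B_4$-set $A\subset\integers_m$ and a unit $r\ne 1$ with $A$ and $rA$ disjoint as subsets of $\{0,\dots,m-1\}$, and attempt to show that $(rA)\cup(A+\delta)$ is a $B_4^+$-set in $\integers_n$ for $n\asymp m$ and a suitably chosen shift $\delta$. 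A large enough $\delta$ collapses every relation that mixes the two copies, and the disjointness kills the obvious off-diagonal coincidences, so the whole problem reduces to a matching statement about $A\cup rA$ --- and there the multiplication $x\mapsto rx$ has to do the work that non-commutativity does in the non-abelian setting. This suggests a second, perhaps more robust, route: find an honest abelian analogue of Theorem~\ref{lb 4}. The non-abelian $B_4^+$-construction beats its $B_4$ counterpart precisely because the relation $xy=yx$ is forbidden in a non-abelian group, which is exactly the symmetry exploited in the alternating identity above; one would therefore look for an abelian group equipped with extra structure --- for instance a module structure over a small non-prime or non-commutative ring, or a distinguished non-degenerate bilinear form --- able to play that role. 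Since the conjecture asks only for \emph{some} $c_k>0$, a construction with even a tiny gain would suffice.

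\textbf{The main obstacle.} The crux is exactly the parity barrier. All known mechanisms for dense $B_k$- and $B_k^+$-sets are ``modular'' --- a subset of $\mathbb{F}_{q^k}$ cut out by an algebraic condition, or the product of such a set with a small group --- and for even $k$ every one of them is unconditionally defeated by the identity $d+(-d)+\dots+d+(-d)=0$. One also cannot simply adjoin a positive proportion of new elements to a Bose--Chowla set: a counting argument shows that the ``forbidden'' values $b_1+\dots+b_k-a_2-\dots-a_k$ already cover $\integers_m$ with large multiplicity, about $q^{k-1}$-fold, so almost every candidate element is inadmissible. A proof must therefore build a genuinely entangled pair of copies and, in doing so, control the new family of potential $B_k$-violations that the entanglement itself creates --- that is the technical heart of the matter. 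I expect the $k=4$ twisted doubling, or an abelian incarnation of Theorem~\ref{lb 4}, to be the right first target, with the passage to all even $k$ --- where the flip can be assembled from $k/2$ transpositions in exponentially many ways --- being the genuinely hard step.
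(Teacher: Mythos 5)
The statement you were asked to prove is a \emph{conjecture}: the paper poses it as an open problem and offers no proof, explicitly expressing the ``hope'' that a resolution would supply a construction not based on Bose--Chowla sets. There is therefore nothing in the paper to compare a proof against, and your decision to give a structured discussion of obstructions and candidate approaches rather than a purported proof is the correct call. Your abstraction of Lemma~\ref{lemma 1 lb} --- $C\times A$ is a $B_k^+$-set whenever $A$ is a $B_k$-set and $0$ is not a sum of $k$ nonzero elements of $C-C$ --- is a correct and tidy generalization, and your alternating-identity argument showing that \emph{every} such product construction with $|C|\ge 2$ collapses when $k$ is even pinpoints exactly the barrier the paper itself gestures at. One small inaccuracy worth flagging: the isomorphism $\integers_2\times\integers_m\cong\integers_{2m}$ requires $m$ odd, whereas the paper's $A^+=\{a+bN:a\in A,\ b\in\{0,1\}\}$ is built directly inside the cyclic group $\integers_{2N}$ for \emph{all} $N$ by an argument that does not pass through a direct product; this is what lets Corollary~\ref{lb cor} use every prime power $q$ even though $q^k-1$ is even for odd $q$. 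Your proposed routes (twisted doublings, an abelian incarnation of Theorem~\ref{lb 4}) are reasonable heuristics, but as you acknowledge they leave the technical heart --- controlling the new $B_k$-violations created by entangling the two copies --- untouched, which is consistent with the conjecture's status as open.
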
 

If Conjecture~\ref{conjecture} is true with $c_k = 2^{1 - 1/k} - 1$ as in the odd case, then using Green's upper bound $F_4(N) \leq (1 + o(1))(7N)^{1/4}$ we can conclude that $F_4 (N)$ and $F_4^* (N)$ are not asymptotically the same just as in the case when $k =3$.  Our hope is that a positive answer to Conjecture~\ref{conjecture} will either provide an analogue of (\ref{F_k}) for even $k \geq 4$ or a construction of a $B_k^+$-set that does not use Bose-Chowla $B_k$-sets.  


\section{Proof of Theorem~\ref{lower bound theorem}}


In this section we show how to construct $B_{k}^{+}$-sets for odd $k \geq 3$.  Our idea is to take a dense $B_k$-set $A$ and a translate of $A$.  

\begin{lemma}\label{lemma 1 lb}
If $A \subset \integers_N$ is a $B_k$-set where $k \geq 3$ is odd, then 
\begin{equation*}
A^{+} := \left\{ a + bN : a \in A, b \in \{0,1 \} \right\}
\end{equation*}
is a $B_k^+$-set in $\integers_{2N}$.
\end{lemma}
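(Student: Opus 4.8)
The plan is to prove the lemma by a direct ``two-digit'' analysis of any putative relation in $A^{+}$. First I would fix representatives: identify each element of $A \subset \integers_N$ with an integer in $\{0,1,\dots,N-1\}$, so that $A^{+}$ consists of the $2|A|$ distinct residues $a$ and $a+N$ (for $a \in A$) modulo $2N$. Given a relation $x_1 + \dots + x_k = y_1 + \dots + y_k$ in $\integers_{2N}$ with all $x_i, y_j \in A^{+}$, write $x_i = a_i + b_i N$ and $y_j = c_j + d_j N$ with $a_i, c_j \in A$ and $b_i, d_j \in \{0,1\}$, and lift the relation to the congruence $\sum_i (a_i + b_i N) \equiv \sum_j (c_j + d_j N) \pmod{2N}$.

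The key steps are as follows. (1) Reduce this congruence modulo $N$: since $N$ times anything vanishes mod $N$, we get $\sum_i a_i \equiv \sum_j c_j \pmod N$, which is a genuine relation among elements of the $B_k$-set $A$ (repeated summands are allowed, as in the definition), so the multiset $\{a_1,\dots,a_k\}$ equals the multiset $\{c_1,\dots,c_k\}$. (2) Relabel the $y_j$ so that $a_i = c_i$ for every $i$; then $\sum_i a_i = \sum_i c_i$ as integers, and the original congruence collapses to $N\sum_i b_i \equiv N\sum_i d_i \pmod{2N}$, i.e.\ $\sum_i b_i \equiv \sum_i d_i \pmod 2$. (3) If $b_i = d_i$ for some $i$, then $x_i = a_i + b_i N = c_i + d_i N = y_i$, and the $B_k^{+}$ conclusion holds. (4) Otherwise $b_i + d_i = 1$ for every $i$, so $\sum_i b_i + \sum_i d_i = k$; since $k$ is odd, $\sum_i b_i$ and $\sum_i d_i$ have opposite parities, contradicting step (2). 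Hence some $x_i$ equals some $y_j$, which is exactly the assertion that $A^{+}$ is a $B_k^{+}$-set in $\integers_{2N}$.

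There is no serious obstacle here; the lemma is genuinely short, and the only places needing care are bookkeeping ones. One should check in step (1) that the reduction mod $N$ really yields a legitimate $B_k$-relation in $\integers_N$ (it does, since the ``low digits'' $a_i$ are precisely elements of $A$), and in step (2) that after relabeling the surviving equation is an honest integer identity rather than merely a congruence, so that the comparison of $\sum b_i$ and $\sum d_i$ is exact. The single place the hypothesis is used essentially is step (4): for even $k$ the parity argument breaks down, and in fact $A^{+}$ is then \emph{not} a $B_k^{+}$-set --- taking $x$-multiset $\{a_1+N,\,a_2\}$ and $y$-multiset $\{a_1,\,a_2+N\}$ with $a_1 \ne a_2$ gives a relation with no common term. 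It is worth noting in passing that this same argument, applied to a densest $B_k$-set in $\integers_N$, yields the inequality $2C_k(N) \le C_k^{+}(2N)$ for odd $k$ that is used later.
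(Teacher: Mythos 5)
Your proof is correct and follows essentially the same route as the paper's: reduce the relation mod $N$ to force the $a_i$'s and $c_j$'s to match as multisets (using that $A$ is a $B_k$-set), relabel, and then observe that the surviving congruence $\sum b_i \equiv \sum d_i \pmod 2$ together with the oddness of $k$ forces $b_i = d_i$ for some index. The only cosmetic difference is that you phrase the final step as a contradiction (assuming $b_i \neq d_i$ for all $i$ gives $\sum b_i + \sum d_i = k$, which clashes with the parity condition) whereas the paper asserts the existence of a matching index directly; the reasoning is identical.
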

\begin{proof}
Let $k \geq 3$ be odd and suppose 
\begin{equation}\label{first eq}
\sum_{i=1}^{k} a_i + b_i N \equiv \sum_{i=1}^{k} c_i + d_i N ~ (\textrm{mod}2N)
\end{equation}
where $a_i, c_i \in A$ and $b_i,d_i \in \{0,1 \}$.  Taking (\ref{first eq}) modulo $N$ gives
\begin{equation*} 
\sum_{i=1}^{k} a_i \equiv \sum_{i=1}^{k} c_ i~ (\textrm{mod}N).
\end{equation*}  
Since $A$ is a $B_k$-set in $\integers_N$, $(a_1, \dots , a_k )$ must be a permutation of $(c_1, \dots , c_k )$.  If we label the $a_i$'s and $c_i$'s so that $a_1 \leq a_2 \leq \dots \leq a_k$ and $c_1 \leq c_2 \leq \dots \leq c_k$ then $a_i = c_i$ for $1 \leq i \leq k$.  Rewrite (\ref{first eq}) as 
\begin{equation*}
\sum_{i=1}^{k} b_i N \equiv \sum_{i=1}^{k} d_i N ~ (\textrm{mod} 2N).
\end{equation*}
The sums $\sum_{i=1}^{k} b_i$ and $\sum_{i=1}^{k} d_i$ have the same parity.  Since $k$ is odd and $b_i, d_i \in \{0,1 \}$, there must be a $j$ such that $b_j = d_j$ so $a_j + b_j N \equiv c_j + d_j N ~ (\textrm{mod} 2N)$.    
\end{proof}

\vspace{1em}

Let $q$ be a prime power, $k \geq 3$ be an odd integer, and $A_k$ be a Bose-Chowla $B_k$-set with $A_k \subset \integers_{q^k - 1}$ (see \cite{BC} for a description of $A_k$).  Let 
\begin{equation*}
A_k^+ = \{ a + b(q^k - 1) : a \in A_k, b \in \{0,1 \} \}.
\end{equation*}
By Lemma~\ref{lemma 1 lb}, $A_k^+$ is a $B_k^+$-set in $\integers_{ 2(q^k-1)}$ and $|A_k^+ | = 2 |A_k| = 2q$ which proves Theorem~\ref{lower bound theorem}.


\section{Proof of Theorem~\ref{ub 3}(i)}


Let $A \subset \integers_N$ be a $B_3^+$-set.  If $N$ is odd then $2x \equiv 2y ~(\textrm{mod}N)$ implies $x \equiv y ~(\textrm{mod}N)$.  If $N$ is even then $2x \equiv 2y ~(\textrm{mod}N)$ implies 
$x \equiv y~(\textrm{mod}N)$ or $x \equiv y + N/2~(\textrm{mod}N)$.  Because of this, the odd case is quite a bit easier to deal with and so we present the more difficult case.  \underline{In this section $N$ is assumed to be even}.  If $N$ is odd then 
the proof of Theorem~\ref{ub 3}(ii) given in the next section works in $\integers_N$ and the only modification needed
is to divide by $N$ instead of $3N$ when applying Cauchy-Schwarz.  For simplicity of notation, we write $x=y$ rather than $x \equiv y~(\textrm{mod}N)$.  

For $n \in \integers_N$, define 
\begin{equation*}
f(n) = \# \left\{  ( \{ a,c \} , b ) \in A^{(2)} \times A : n = a - b + c, \{ a,c \} \cap \{b \} = \emptyset \right\}.
\end{equation*}
The sum $\sum f(n) (f(n) -1)$ counts the number of ordered pairs $\left(  ( \{a,c \} , b ) , ( \{x,z \} , y ) \right)$ such that the tuples 
$( \{a,c \} , b )$ and $( \{x,z \}, y )$ are distinct and both are counted by $f(n)$.  For each such pair we cannot have $\{ a , c \} = \{ x , z \}$ otherwise the tuples would be equal.  If $( ( \{ a , c \} , b ) , ( \{ x , z \} , y ) )$ is counted by  
$\sum f(n) ( f(n) -1)$ then $a +y + c = x + b + z$.  By the $B_3^+$ property, $\{ a , y , c \} \cap \{ x, b, z \} \neq \emptyset$ so that $\{ a, c \} \cap \{x, z \} \neq \emptyset$ or $b = y$.  The tuples are distinct so both of these cases cannot occur at the same time.  

\vspace{.25em}
\noindent
\textbf{Case 1:}  $\{a,c \} \cap \{ x, z \} \neq \emptyset$ and $b \neq y$.  

Without loss of generality, assume $a = x$.  Cancel $a$ from both sides of the equation $a - b + c = x - y + z$ and solve for $c$
to get $c = b - y +z$.  Here we are using the ordering of the tuples $( ( \{a,c \} , b), ( \{ x, z \} , y ) )$ to designate which element is solved for after the cancellation of the common term.  

If $z = b$ then $c + y = 2b$ and we have a 3-term arithmetic progression (a.p.\ for short).  
The number of trivial 3-term a.p.'s in $A$ is $2|A|$ since for any $a \in A$,
\begin{equation*}
a + a = 2a = 2(a + N/2).
\end{equation*}
Next we count the number of non-trivial 3-terms a.p.'s.    
By non-trivial, we mean that all terms involved in the a.p.\ are distinct and $a + a = 2 (a +N/2)$ is considered trivial.  

If $p + q = 2r$ is a 3-term a.p.\ then call $p$ and $q$ \emph{outer terms}.  
Let $p$ be an outer term of the 3-term a.p.\ $p + q = 2r$ where $p,q , r \in A$.  
We will show that $p$ is an outer term of at most one other non-trivial a.p.  Let $p + q' = 2r'$ 
be another a.p.\ with $q' , r' \in A$ and $(q, r, ) \neq ( q' , r' )$.  

If $r = r'$ then $p + q = 2r = 2r' = p + q'$ so $q = q'$ which is a contradiction and we can assume $r \neq r'$.  

If $q = q'$ then $2r = p + q = p + q' = 2r'$ so $r' = r$ or $r' = r + N/2$ and $p + q = 2r$ and 
$p +q = 2(r + N/2)$.  

Now suppose $r \neq r'$ and $q \neq q'$.  Since $2r - q = p = 2r' - q'$ we have by the $B_3^+$ property, 
\begin{equation*}
\{ r , q' \} \cap \{ r' , q \} \neq \emptyset.
\end{equation*}
The only two possibilities are $r =q$ and $r' = q'$ but in either of these cases we get a trivial 3-term a.p.  
Putting everything together proves 
\begin{lemma}\label{ap}
If $A \subset \integers_N$ is a $B_3^+$-set then the number of 3-term arithmetic progressions in $A$ is at most $3 |A|$. 
\end{lemma}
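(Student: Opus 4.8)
The plan is to estimate the number of $3$-term arithmetic progressions $p+q=2r$ (with $p,q,r\in A$) by splitting them into two families: the \emph{trivial} ones, in which the two outer terms coincide, $p=q$, and the \emph{non-trivial} ones, in which $p,q,r$ are pairwise distinct. I would bound each family separately and add.

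For the trivial family I would argue by direct enumeration. If $p=q=a$ then $2a=2r$, so in $\integers_N$ either $r=a$ or, since $N$ is even, $r=a+N/2$; hence each $a\in A$ accounts for at most two trivial progressions and there are at most $2|A|$ in all. The second midpoint $a+N/2$ is exactly the torsion phenomenon noted at the beginning of this section, and it is what forces the constant here to be $2$ rather than $1$.

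For the non-trivial family I would run a charging argument over outer terms, reprising the outer-term analysis developed above. Fix $p\in A$ that is an outer term of some non-trivial progression $p+q=2r$; I claim it is an outer term of at most one further non-trivial progression $p+q'=2r'$. Comparing the two forces a short dichotomy: if $r=r'$ then $q=q'$, no new progression; if $q=q'$ then $2r=2r'$, so $r'=r+N/2$ is the only option, which accounts for the single permitted extra progression; and if $r\ne r'$ and $q\ne q'$, then rewriting $2r-q=p=2r'-q'$ as $r+r+q'=r'+r'+q$ and applying the $B_3^+$ hypothesis gives $\{r,q'\}\cap\{r',q\}\ne\emptyset$, whose only surviving cases $r=q$ and $q'=r'$ each collapse one of the two progressions to a trivial one, contradicting non-triviality. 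So each element of $A$ is an outer term of at most two non-trivial progressions; since a non-trivial progression has exactly two distinct outer terms, double counting gives at most $|A|$ of them.

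Adding the two estimates yields at most $2|A|+|A|=3|A|$ progressions, as claimed. I expect the only genuinely delicate point to be the last case of the dichotomy — showing that two honest non-trivial progressions sharing one outer term cannot have their remaining outer terms and their midpoints both distinct — and this is precisely where the $B_3^+$ structure is used; the remaining steps are routine manipulations in $\integers_N$, made slightly fussier only by the element $N/2$.
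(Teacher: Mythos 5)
Your argument is correct and follows essentially the same route as the paper: split into trivial progressions (at most $2|A|$, owing to $2a = 2(a+N/2)$ when $N$ is even) and non-trivial ones, then charge each non-trivial progression to its two outer terms and show, via the same three-way dichotomy and the same application of the $B_3^+$ property to $r+r+q' = r'+r'+q$, that each element of $A$ is an outer term of at most two non-trivial progressions, giving at most $|A|$ of them by double counting. Nothing substantively different from the paper's proof.
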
            

Given a fixed element $a \in A$ and a fixed 3-term a.p.\ $c + y = 2b$ in $A$, there are at most $4!$ ways to form an ordered tuple of the form $( ( \{a,c \} , b ) , ( \{ a, b \} , y ) )$.  
The number of ordered tuples counted by $\sum f(n) (f(n) -1)$ when $\{a,c \} \cap \{x,z \} \neq \emptyset $ and $z  =b $ is at most $4! |A| \cdot 3|A|  = 72 |A|^2$.  The first factor of $|A|$ in the expression $4!|A| \cdot 3|A|$ comes from the number of ways to choose the element $a  = x \in \{a,c \} \cap \{x,z \}$.      

Assume now that $z \neq b$.  Recall that we have solved for $c$ to get $c = b - y + z$.  If $b = y$ then $c = z$ which implies $\{a,c \} = \{x,z \}$, a contradiction as the tuples are distinct.  By definition $y \neq z$ so $c = b - y + z$ with $\{b,z \} \in A^{(2)}$ and $\{y\} \cap \{b,z \} =  \emptyset$.  The number of ways to write $c$ in this form is $f(c)$.  Given such a solution $\{b,z \}, y$ counted by $f(c)$, there are two ways to order $b$ and $z$ and $|A|$ ways to choose $a =x$.  The number of ordered tuples we obtain when $\{a,c \} \cap \{x,z \} \neq \emptyset $ and $z \neq b$ is at most $|A| \cdot 2 \sum_{c \in A} f(c)$.  
This completes the analysis in Case 1.  

Before addressing Case 2, the case when $b=y$ and $\{a,c \} \cap \{x,z \} = \emptyset$, some additional notation is needed.  
For $d \in A+A$, define 
\begin{equation*}
S(d) = \left\{ \{a,b\} \in A^{(2)} : a + b = d, \exists \{a' , b' \} \in A^{(2)} ~ \textrm{with} ~ \{a,b\} \cap \{a',b'\} = \emptyset, 
a' + b' =d \right\}.
\end{equation*} 
Let $d_1, d_2, \dots , d_M$ be the integers for which $S(d_i) \neq \emptyset$.  Write $S_{i}^{2}$ for $S(d_i)$ and define  
\begin{equation*}
T_{i}^{1} = \{ a : a \in \{a,b \} ~ \textrm{for some} ~ \{a,b\} \in S_{i}^{2} \}.
\end{equation*}
Let $s_i = | S_{i}^{2} |$ and $d_1, d_2 , \dots , d_m$ be the integers for which $s_i = 2$, and $d_{m+1} , \dots , d_M$ be the integers for 
which $s_i \geq 3$.     
For $1 \leq i \leq M$, we will use the notation 
$S_{i}^{2} = \{ \{ a_{1}^{i} , b_{1}^{i} \} , \{a_{2}^{i}, b_{2}^{i} \}, \dots , \{ a_{s_i}^i , b_{s_i}^i \} \}$.  A simple but important observation is that for any fixed $i \in \{1, \dots , M \}$, any element of $A$ appears in at most one pair in $S_i^2$.   

If $A$ was a $B_3$-set then there would be no $d_i$'s.  This suggests that a $B_3^+$-set or a $B_3^*$-set that is denser than a $B_3$-set should have many $d_i$'s.  The $B_3^+$-set $A_3^+$ constructed in Theorem~\ref{lower bound theorem} has $m \approx  \frac{1}{2}{ |A_3^+| \choose 2}$.  However, if $A_3^+$ is viewed as a subset of $\integers$ then 
$m \approx \frac{1}{4} { |A_3^+ | \choose 2}$ (see Lemma~\ref{not too many} which also holds in $\integers_N$ if $N$ is odd).    

\vspace{1em}
\noindent
\textbf{Case 2:}  $b = y$ and $\{a,c \} \cap \{x,z \} = \emptyset$.

If $b = y$ then $a+c = x+z$.  There are $|A|$ choices for $b = y$ and 
\begin{equation*}
\sum_{i=1}^{M} |S_i^2| ( |S_i^2 | - 1)
\end{equation*}
ways to choose an ordered pair of different sets $\{a,c \}, \{x,z \} \in A^{(2)}$ with $a + c = x+z$ and $\{a,c \} \cap \{x,z \} = \emptyset$.  

\vspace{1em}

Putting Cases 1 and 2 together gives the estimate 
\begin{equation}\label{estimate 0}
\sum f(n)(f(n)-1) \leq  |A| \left( 2 \sum_{c \in A} f(c) + \sum_{i=1}^{M} |S_i^2| ( |S_i^2| - 1)  \right) + 72|A|^2.
\end{equation}
        
Our goal is to find upper bounds on the sums $\sum_{c \in A }f(c)$ and $\sum_{i=1}^{M} |S_i^2| ( |S_i^2| - 1)$. 

\begin{lemma}\label{disjoint}
If $x \in T_i^1 \cap T_j^1$ for some $i \neq j$ then (i) $\max \{ s_i , s_j \} \leq 3$ and (ii) if $s_i = s_j = 3$ then for some 
$x_1, y ,z \in A$ depending on $i$ and $j$ we have $d_j = d_i + N/2$ and 
\begin{equation*}
S_i^2 = \left\{ \{x,x_1 \}, \{y , z \} , \{y + \frac{N}{2} , z + \frac{N}{2} \} \right\}, 
S_j^2 = \left\{ \{x,x_1 + \frac{N}{2} \} , \{y + \frac{N}{2} , z \}, \{y , z + \frac{N}{2} \} \right\}.
\end{equation*}
\end{lemma}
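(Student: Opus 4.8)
The plan is to extract from $x \in T_i^1 \cap T_j^1$ two pairs $\{x, x_1\} \in S_i^2$ and $\{x, x_1'\} \in S_j^2$, with $x + x_1 = d_i$ and $x + x_1' = d_j$, and note that $x_1 \neq x_1'$ since $d_i \neq d_j$. The engine of everything is a claim I would prove first: \emph{every pair $P = \{a,b\} \in S_i^2$ with $x \notin P$ meets every pair $Q = \{c,e\} \in S_j^2$ with $x \notin Q$}. To prove the claim, I would first observe that $x \notin P$ forces $P$ to be disjoint from $\{x,x_1\}$ (otherwise $x_1 \in P$ and the other element of $P$ equals $d_i - x_1 = x$), and similarly $Q$ is disjoint from $\{x, x_1'\}$. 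Then from $a+b = d_i = x+x_1$ and $c+e = d_j = x+x_1'$ I would derive $a + b + x_1' = c + e + x_1$ (both sides equal $d_i + d_j - x$), apply the $B_3^+$ property to get $\{a,b,x_1'\} \cap \{c,e,x_1\} \neq \emptyset$, and eliminate every possibility other than $P \cap Q \neq \emptyset$ using the two disjointness facts and $x_1 \neq x_1'$.

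For part (i), I would use that $S(d) \neq \emptyset$ already implies $|S(d)| \geq 2$, so $s_i, s_j \geq 2$, together with the observation that within one $S_i^2$ the pairs are pairwise disjoint. Picking a pair $P \in S_i^2$ with $x \notin P$, the claim says $P$ meets all $s_j - 1$ pairs of $S_j^2$ other than $\{x,x_1'\}$; these are pairwise disjoint and $|P| = 2$, so $s_j - 1 \leq 2$. By symmetry $s_i \leq 3$ as well, which is (i).

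For part (ii), assuming $s_i = s_j = 3$, I would write $S_i^2 = \{\{x,x_1\}, P_2, P_3\}$ and $S_j^2 = \{\{x,x_1'\}, Q_2, Q_3\}$ and feed this into the claim: each $P_t$ meets each $Q_u$, and since $Q_2 \cap Q_3 = \emptyset$, $|P_t| = 2$, and $P_2 \cap P_3 = \emptyset$, this forces $Q_2$ and $Q_3$ each to contain exactly one element of $P_2$ and one of $P_3$. After relabelling I may assume $P_2 = \{p,q\}$, $P_3 = \{r,s\}$, $Q_2 = \{p,r\}$, $Q_3 = \{q,s\}$, so that $p + q = r + s = d_i$ and $p + r = q + s = d_j$. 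Then I would subtract $p+q = d_i$ from $p+r = d_j$ to get $r - q = d_j - d_i$, subtract $q+s = d_j$ from $r+s = d_i$ to get $r - q = d_i - d_j$, conclude $2(d_j - d_i) = 0$ in $\integers_N$ and hence $d_j = d_i + N/2$ (as $d_i \neq d_j$), and finally read off $r = q + N/2$, $p = d_i - q$, $s = d_i - r = p + N/2$, and $x_1' = d_j - x = x_1 + N/2$; taking $y = p$ and $z = q$ reproduces the stated forms of $S_i^2$ and $S_j^2$.

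I expect the only real obstacle to be the claim itself together with the disjointness bookkeeping in (ii): once the four pairs $P_2,P_3,Q_2,Q_3$ are known to form a ``$K_{2,2}$'' pattern, the remainder is a two-line linear computation in $\integers_N$. It is worth keeping in mind that $N$ is even, so that $N/2$ is meaningful and $2z = 0$ has exactly the solutions $z \in \{0, N/2\}$ — this is precisely where the $N/2$-shifts in the statement come from.
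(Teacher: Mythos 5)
Your proof is correct and follows essentially the same route as the paper: the core step in both is deriving $a+b+x_1'=c+e+x_1$ (the paper writes this as $a_l^i+b_l^i+b_1^j=a_k^j+b_k^j+b_1^i$ after solving for $x$) and invoking the $B_3^+$ property to force an intersection of the pairs, then reading off the $K_{2,2}$ incidence pattern and the relation $2(d_j-d_i)=0$. Your packaging of the intersection property as a single standalone claim for arbitrary $P,Q$ not containing $x$ is a mild streamlining of the paper's case-by-case version, but the underlying argument and the final arithmetic are identical.
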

\begin{proof}
If $s_i =2$ and $s_j = 2$ then we are done.  Assume $s_j > 2$ and 
let $S_i^2 = \{ \{a_1^i, b_1^i \}, \dots ,\{a_{s_i}^i , b_{s_i}^i \} \}$ and 
$S_j^2 = \{ \{ a_1^j , b_1^j \}, \dots , \{ a_{s_j}^j, b_{s_j}^j \} \}$.  Without loss of generality, suppose $x = a_{i}^{1}$ and $x = a_{j}^{1}$.  
By definition, $s_i \geq 2$ so we can write $d_i = x + b_{1}^{i} = a_{2}^{i} + b_{2}^{i}$ and 
$d_j = x+ b_{1}^{j} = a_{2}^{j} + b_{2}^{j} = a_{3}^{j} + b_{3}^{j}$.  

Solve for $x$ to get $x = a_2^i + b_2^i - b_1^i = a_2^j + b_2^j - b_1^j$ which can be rewritten as 
\begin{equation}\label{disjoint 1}
a_2^i + b_2^i + b_1^j = a_2^j + b_2^j + b_1^i.
\end{equation}
Since $d_i \neq d_j$, $b_{1}^{i}$ cannot be $b_{1}^{j}$ therefore $b_1^j$ is not on the right hand side of (\ref{disjoint 1}), and $b_1^i$ is not on the left hand side of (\ref{disjoint 1}).  By the $B_3^+$ property,  
\begin{equation*}
\{ a_2^i , b_2^i \} \cap \{a_2^j , b_2^j \} \neq \emptyset.
\end{equation*}
The same argument can be repeated with $a_3^j$ in place of $a_2^j$ and $b_3^j$ in place of $b_2^j$ to get 
\begin{equation*}
\{ a_2^i , b_2^i \} \cap \{a_3^j , b_3^j \} \neq \emptyset.
\end{equation*}
Recall any element of $A$ can occur at most once in the list $a_1^j , b_1^j, a_2^j, b_2^j, \dots a_{s_j}^{j}, b_{s_j}^{j}$ thus $s_j \leq 3$. 
By symmetry, $s_i \leq 3$.  

Now suppose $s_i = s_j = 3$.  Repeating the argument above we have for each $2 \leq k \leq 3$ and $2 \leq l \leq 3$,
\begin{equation*}
| \{a_l^i , b_l^i \} \cap \{ a_k^j , b_k^j \} | =1.
\end{equation*}
This intersection cannot have size 2 since $d_i \neq d_j$.  Without loss of generality, let $y = a_2^i = a_2^j$, $z = b_2^i = a_3^j$, $u = a_3^i = b_2^j$, and $v = b_3^i = b_3^j$.  We represent these equalities between $T_i^1$ and $T_j^1$ using a bipartite graph with 
parts $T_i^1$ and $T_j^1$ where $w \in T_i^1$ is adjacent to $w' \in T_j^1$ if and only if $w = w'$ (see Figure 1). 
\vspace{.5em} 
\begin{center}
\begin{picture}(300,120)
\put(0,30){\circle*{4}}
\put(60,30){\circle*{4}}
\put(120,30){\circle*{4}}
\put(180,30){\circle*{4}}
\put(240,30){\circle*{4}}
\put(300,30){\circle*{4}}

\put(0,90){\circle*{4}}
\put(60,90){\circle*{4}}
\put(120,90){\circle*{4}}
\put(180,90){\circle*{4}}
\put(240,90){\circle*{4}}
\put(300,90){\circle*{4}}

\put(0,30){\line(0,1){60}}
\put(120,30){\line(0,1){60}}
\put(180,30){\line(1,1){60}}
\put(240,30){\line(-1,1){60}}
\put(300,30){\line(0,1){60}}

\put(-18,15){$a_1^j = x$}
\put(-18,97){$a_1^i = x$}
\put(57,15){$b_1^j$}
\put(57,97){$b_1^i$}
\put(102,15){$a_2^j = y$}
\put(102,97){$a_2^i = y$}
\put(162,15){$b_2^j = u$}
\put(162,97){$b_2^i = z$}
\put(222,15){$a_3^j = z$}
\put(222,97){$a_3^i = u$}
\put(283,15){$b_3^j = v$}
\put(283,97){$b_3^i = v$}
\put(137,-5){\Large{$T_j^1$}}
\put(137,110){\Large{$T_i^1$}}

\end{picture}

\vspace{1em}

Figure 1 - Equality Graph for Lemma~\ref{disjoint}

\end{center}

\vspace{.5em}

The equalities $d_i = y + z = u+v$ and $d_j = y+u = z + v$ imply $d_i - d_j = z - u$ and $d_i - d_j = u - z$
therefore $2z = 2u$.  If $z = u$ then this is a contradiction since the elements in the list 
$x, b_1^i , y , z, u, v$ are all distinct.  It is in this step that the parity of $N$ plays an important role.  We conclude $u = z + N/2$ and 
\begin{equation*}
d_j = y + u = y + (z + N/2) = y + z + N/2 =d_i + N/2.
\end{equation*}
Let $b_1^i = x_1$ so $b_1^j = x_1 + N/2$.  Since $d_i = y + z = u + v$ and $u = z + N/2$, 
\begin{equation*}
v = y + z - u = y + z - (z + N/2) = y - N/2 = y + N/2.
\end{equation*}
Substituting $u = z+N/2$ and $v = y + N/2$ gives the assertion about the pairs in $S_i^2$ and $S_j^2$ when $s_i = s_j = 3$.   
\end{proof}

\begin{corollary}\label{disjoint c}
If $s_i \geq 4$ then for any $j \neq i$, $T_i^1 \cap T_j^1 = \emptyset$.  Furthermore, any $x \in A$ is in at most two 
$T_i^1$'s with $s_i =3$.  
\end{corollary}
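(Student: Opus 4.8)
The plan is to read off both assertions directly from Lemma~\ref{disjoint}, which has already done all of the substantive work; the corollary is essentially bookkeeping on top of it. For the first statement, I would argue by contradiction: suppose $s_i \geq 4$ and yet $T_i^1 \cap T_j^1 \neq \emptyset$ for some $j \neq i$. Choosing any $x$ in this intersection and applying Lemma~\ref{disjoint}(i) gives $\max\{s_i, s_j\} \leq 3$, contradicting $s_i \geq 4$. Hence no such $j$ exists, i.e.\ $T_i^1 \cap T_j^1 = \emptyset$ for every $j \neq i$.

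For the second statement I would again argue by contradiction. Suppose some $x \in A$ lies in three pairwise distinct sets $T_i^1, T_j^1, T_l^1$, each with $s_i = s_j = s_l = 3$. Apply Lemma~\ref{disjoint}(ii) to the pair $(i,j)$: since $x \in T_i^1 \cap T_j^1$ and $s_i = s_j = 3$, we obtain $d_j = d_i + N/2$. Applying the same part of the lemma to the pair $(i,l)$ gives $d_l = d_i + N/2$. Subtracting these two relations yields $d_j = d_l$ in $\integers_N$, which is impossible because $d_1, \dots, d_M$ are distinct by construction. Therefore $x$ can belong to at most two of the $T_i^1$'s with $s_i = 3$.

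The only point needing a line of care is that the conclusion ``$d_j = d_i + N/2$'' of Lemma~\ref{disjoint}(ii) is symmetric in the two indices once one works modulo $N$, since $N/2 + N/2 \equiv 0$; this is what makes the three pairwise applications mutually consistent and the cancellation $d_j = d_l$ legitimate. Apart from this, there is no real obstacle: one need only be careful, in the second statement, to count over the $T_i^1$'s with $s_i = 3$ only, and not to conflate them with the ones having $s_i = 2$.
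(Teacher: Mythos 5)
Your proof is correct and follows essentially the same route as the paper: the first claim falls out of Lemma~\ref{disjoint}(i), and the second is obtained by applying Lemma~\ref{disjoint}(ii) twice (to the pairs $(i,j)$ and to $(i,l)$) and deriving a contradiction with the distinctness of the $d_i$'s. The only cosmetic difference is that the paper runs the second argument through the explicit structure of $S_i^2$ and $S_j^2$ — observing $\{x,x_1\}\in S_i^2$, $\{x,x_1+N/2\}\in S_j^2$, and by uniqueness of the pair containing $x$ in $S_i^2$, also $\{x,x_1+N/2\}\in S_k^2$, forcing $d_j=d_k$ — whereas you extract only the numerical consequence $d_j=d_i+N/2$ and subtract; both lead to the same cancellation, and your remark about the symmetry modulo $N$ (while not strictly needed, since you keep $i$ in the first slot in both applications) is a sound sanity check.
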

\begin{proof}
The first statement follows immediately from Lemma~\ref{disjoint}.  For the second statement, suppose 
$x \in T_i^1 \cap T_j^1$ with $s_i = s_j = 3$ and $i \neq j$.  By Lemma~\ref{disjoint}, 
$\{ x , x_1 \} \in S_i^2$ and $\{ x , x_1 + N/2 \} \in S_j^2$ for some $x_1 \in A$.  If $x \in T_k^1$ with $k \neq i$ then 
$\{x , x_1 + N/2 \} \in S_k^2$ so $d_j = x + (x_1 + N/2) = d_k$ and $j = k$.  
\end{proof}

\vspace{1em}

\begin{lemma}\label{step 2}
If $A \subset \integers_N$ is a $B_3^+$-set then 
\begin{equation*}
\sum_{c \in A} f(c) \leq  |A|^2 + 7|A|.
\end{equation*}
\end{lemma}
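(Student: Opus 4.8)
The plan is to bound $\sum_{c\in A} f(c)$ by carefully analyzing when a single element $c$ can have many representations $c = a - b + c'$ (with $\{a,c'\}\in A^{(2)}$ and $b\notin\{a,c'\}$), that is, many solutions to $a + c' = b + c$ with the stated restrictions. First I would fix $c \in A$ and observe that each representation counted by $f(c)$ gives a pair $\{a,c'\}$ and an element $b$ with $a + c' = b + c$; equivalently, $\{a, c'\}$ and $\{b, c\}$ are two two-element multisets with the same sum. Since $b \ne a$ and $b \ne c'$, and typically $c \notin \{a,c'\}$, this is essentially saying $\{a,c'\} \in S(b+c)$ — i.e. $b+c$ is one of the $d_i$'s and $\{a,c'\}$ is one of the pairs in the corresponding $S_i^2$, with $\{b,c\}$ being a \emph{different} disjoint pair in the same $S_i^2$ (we must separately handle the degenerate cases where $c$ equals $a$ or $c'$, or where $\{b,c\}$ is not disjoint from $\{a,c'\}$, but these contribute only $O(|A|)$). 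So, up to lower-order terms, $f(c)$ counts pairs $(i, \{a,c'\})$ with $c \in T_i^1$ (via the partner pair $\{b,c\}\in S_i^2$) and $\{a,c'\}\in S_i^2\setminus\{\{b,c\}\}$, which is at most $\sum_{i:\, c\in T_i^1}(s_i - 1)$.

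Summing over $c$, I would then get $\sum_{c\in A} f(c) \le \sum_{i=1}^{M} s_i(s_i-1) + O(|A|)$, since each $i$ contributes $s_i - 1$ for each of the $s_i$ elements $c \in T_i^1$. The heart of the matter is then to show $\sum_i s_i(s_i-1) \le |A|^2 + O(|A|)$. This is where Corollary~\ref{disjoint c} does the work: the sets $T_i^1$ with $s_i \ge 4$ are pairwise disjoint and disjoint from all other $T_j^1$, so $\sum_{i: s_i \ge 4} s_i \le |A|$, and crudely $\sum_{i: s_i\ge 4} s_i(s_i-1)$ needs a better bound — actually I expect one argues that for $s_i \ge 4$, $\sum s_i(s_i-1)$ is controlled because $\sum_{i:s_i\ge 4} s_i \le |A|$ together with a bound on the individual $s_i$; more carefully, the pairs themselves $\{a,c'\}\in S_i^2$ for $s_i\ge 4$ partition into disjoint families, and counting ordered pairs of disjoint elements of $A^{(2)}$ with a common sum is naturally at most $\binom{|A|}{2}$-ish. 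For the $T_i^1$ with $s_i = 3$: each $x\in A$ lies in at most two such $T_i^1$ (Corollary~\ref{disjoint c}), so $\sum_{i: s_i = 3} s_i = \sum_{i:s_i=3}|T_i^1| \le 2|A|$, giving $\sum_{i:s_i=3} s_i(s_i-1) = \sum_{i:s_i=3} 6 \le \frac{6}{3}\cdot 2|A| = 4|A|$. For $s_i = 2$: $\sum_{i: s_i=2} s_i(s_i-1) = 2m$ where $m$ is the number of such $i$; this is the dominant term and one bounds $2m \le |A|^2 + O(|A|)$ because distinct $i$ with $s_i = 2$ correspond to distinct values $d_i$, each realized by a disjoint pair of $2$-sets, and the total number of pairs $\{a,c'\}\in A^{(2)}$ summing to some fixed value, over all values, is just $\binom{|A|}{2}$ — so $2m \le 2\binom{|A|}{2} \le |A|^2$.

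The main obstacle I anticipate is the bookkeeping for the $s_i \ge 4$ contribution: one needs that $\sum_{i: s_i \ge 4} s_i(s_i-1)$ is also at most $O(|A|)$ (so that it folds into the error term), and this requires using the full strength of the disjointness in Corollary~\ref{disjoint c} together with perhaps a pigeonhole on how large $s_i$ itself can be — or, alternatively, noting that when $s_i \ge 4$, all $s_i$ of the 2-sets in $S_i^2$ are pairwise disjoint (by the ``any element of $A$ appears in at most one pair in $S_i^2$'' observation), so $2s_i \le |A|$, and then $\sum_{i: s_i \ge 4} s_i(s_i - 1) \le (\max_i s_i)\sum_{i:s_i\ge 4}(s_i-1) \le |A|\cdot|A|/2$, which is \emph{not} lower order — so more care is needed, and probably the right move is to note that for $s_i\ge 3$ all pairs in $S_i^2$ are disjoint and so these $d_i$'s and their pairs inject into a count of ordered disjoint pairs with equal sum, which is itself $O(|A|^2)$ and gets absorbed, or better, that the number of $i$ with $s_i\ge 3$ is small. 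I would reconcile this by tracking constants carefully: the final target $|A|^2 + 7|A|$ suggests the intended split is precisely $2m \le |A|^2$ from $s_i=2$, $4|A|$ from $s_i=3$, some $O(|A|)$ from $s_i\ge 4$ via disjointness, plus $O(|A|)$ (at most $3|A|$) from the degenerate cases where $c\in\{a,c'\}$ or the partner pair is not disjoint, and I would present the argument in exactly that order, flagging the $s_i\ge4$ step as the one demanding the sharpest use of Corollary~\ref{disjoint c}.
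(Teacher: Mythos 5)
Your approach is genuinely different from the paper's. The paper splits $f(c) = g_1(c) + g_2(c)$ (where $g_2$ picks out the 3-term a.p.\ representations, contributing at most $3|A|$ by Lemma~\ref{ap}), and then proves a \emph{pointwise} bound $g_1(c) \leq |A| + 4$ for every single $c$ by a case analysis driven by Lemma~\ref{disjoint} and Corollary~\ref{disjoint c}: either $c$ lies in a unique $T_j^1$ (so $g_1(c) \leq s_j - 1 \leq |A|/2$), or it lies in two $T_j^1$'s both with $s_j = 3$ (so $g_1(c) \leq 4$), or it lies in some $T_i^1$'s with $s_i = 2$ plus at most two with $s_i = 3$ (at most $|A|$ of the former by counting pairs through $c$, giving $g_1(c) \leq |A| + 4$). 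Summing at once gives $|A|^2 + 7|A|$. You instead attempt a \emph{global} bound on $\sum_i s_i(s_i - 1)$ by stratifying the indices into $s_i = 2$, $s_i = 3$, $s_i \geq 4$. Note also a factor-of-two slip: $|T_i^1| = 2s_i$, not $s_i$, so each $i$ contributes $2s_i(s_i - 1)$ to $\sum_c g_1(c)$ (this is exactly Lemma~\ref{new new lemma}); consequently you need $\sum_i s_i(s_i - 1) \leq \frac{|A|^2}{2} + O(|A|)$, not $|A|^2 + O(|A|)$. (Your $s_i = 2$ bound $2m \leq \binom{|A|}{2} \approx |A|^2/2$ is in fact of the right order, so the two slips happen to cancel, but both should be corrected.)

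The genuine gap is exactly the one you flag: the class $s_i \geq 4$ does \emph{not} contribute $O(|A|)$. A single index $i$ with $s_i$ close to $|A|/2$ alone yields $s_i(s_i - 1) \approx |A|^2/4$, and naively adding this to the $\approx |A|^2/2$ from the $s_i = 2$ class overshoots the target. The repair requires coupling the classes rather than bounding each in isolation. Concretely: by Corollary~\ref{disjoint c} the $T_i^1$'s with $s_i \geq 4$ are pairwise disjoint and disjoint from every other $T_j^1$, so setting $N_4 = \sum_{s_i \geq 4} 2s_i = \sum_{s_i \geq 4}|T_i^1| \leq |A|$, one gets $\sum_{s_i \geq 4} s_i(s_i - 1) \leq \tfrac{N_4}{2}\cdot\tfrac{N_4}{2} = \tfrac{N_4^2}{4}$, while all pairs counted by indices with $s_i \leq 3$ must live in the remaining $|A| - N_4$ elements, forcing $2m \leq \binom{|A| - N_4}{2} \leq \tfrac{(|A|-N_4)^2}{2}$. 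The function $\tfrac{(|A|-N_4)^2}{2} + \tfrac{N_4^2}{4}$ is maximized at $N_4 = 0$ with value $\tfrac{|A|^2}{2}$, which closes the argument. Without this trade-off your sketch does not go through; with it, your route works and is an interesting alternative to the paper's pointwise bound, though the paper's version avoids the optimization entirely.
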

\begin{proof}
For $c \in A$, let 
\begin{equation*}
g_1 (c) = \# \left\{ ( \{x,z \} , y ) \in A^{(2)} \times A : c = x - y + z, c \neq y, \{x,z \} \cap \{y \} = \emptyset \right\}
\end{equation*}
and 
\begin{equation*}
g_2(c) = \# \left\{ ( \{x,z \} , y ) \in A^{(2)} \times A : c = x - y + z, c = y, \{x,z \} \cap \{y \} = \emptyset \right\}.
\end{equation*}
For each $c \in A$, $f(c) = g_1 (c) + g_2 (c)$.  The sum $\sum_{c \in A} g_2 (c)$ is exactly the number of non-trivial 3-term 
a.p.'s in $A$ so by Lemma~\ref{ap}, $\sum_{c \in A} g_1 (c) \leq 3 |A|$.  Estimating $\sum_{c \in A} g_1 (c)$ takes more work.  To compute $g_1 (c)$ with $c \in A$, we first choose an $i$ with $c \in T_i^1$ and then choose one of the pairs $\{x,z \} \in S_i^2 \backslash \{c,y \}$ to obtain a solution $c = x - y +z$ with $c \neq y$ and $\{x,z \} \cap \{y \} = \emptyset$.  

If $c \notin T_1^1 \cup \dots \cup T_M^1$ then the equation $c+y = x+z$ with $c,y,x$, and $z$ all distinct has no solutions in $A$ so $g_1 (c) = 0$.  Assume $c \in T_1^1 \cup \dots \cup T_M^1$.    

\noindent
\textbf{Case 1:}  $c \notin T_1^1 \cup \dots \cup T_m^1$.

By Corollary~\ref{disjoint c} there are two possibilities.  One is that there is a unique $j$ with $c \in T_j^1$ and $s_j \geq 3$ in which case $|S_j^2| \leq \frac{ |A|}{2}$ so $g_1 (c) \leq \frac{ |A|}{2}$.  The other possibility 
is that $c \in T_i^1 \cap T_j^1$ with $s_i = s_j =3$ and $i \neq j$.  In this case $g_1 (c) \leq 4$ because 
we can choose either $i$ or $j$ and then one of the two pairs in $S_i^2$ or $S_j^2$ that does not contain $c$.   

\noindent
\textbf{Case 2:}  $c \in T_1^1 \cup \dots \cup T_m^1$.

By Lemma~\ref{disjoint}, $c$ is not in any $T_j^1$ with $s_j \geq 4$ and $c$ is in at most two $T_j^1$'s with $s_j = 3$.  
There are at most $|A|$~$T_i^1$'s with $c \in T_i^1$ since there are at most $|A|$ pairs $\{c, y \}$ that contain $c$ so $g_1 (c) \leq |A|+4$.    

In all cases, $g_1 (c) \leq |A| +4$ and 
\begin{equation*}
\sum_{c \in A} f(c) = \sum_{c \in A} (g_1(c) + g_2 (c) ) \leq |A|(|A|+4) + 3 |A|
\end{equation*}
which proves the lemma.
\end{proof}

\begin{lemma}\label{new new lemma}
If $g_1(c)$ is the function of Lemma~\ref{step 2} then 
\begin{equation*}
2 \sum_{i=1}^{M} |S_i^2| ( |S_i^2| - 1 ) = \sum_{c \in A} g_1 (c).
\end{equation*}
\end{lemma}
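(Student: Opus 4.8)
The plan is to prove the identity by a direct double-counting argument: both sides enumerate the same family of configurations, namely ordered pairs of disjoint $2$-element subsets of $A$ with a common sum, decorated by a choice of which element of the first pair plays a distinguished role.

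First I would unwind the left-hand side. By definition, $\sum_{c \in A} g_1(c)$ is the number of tuples $(c, y, \{x,z\})$ with $c \in A$, $(\{x,z\}, y) \in A^{(2)} \times A$, $c = x - y + z$, $c \neq y$, and $y \notin \{x,z\}$. The key preliminary observation is that $c \notin \{x,z\}$ is then automatic: if $c = x$, then $c = x - y + z$ forces $y = z$, contradicting $y \notin \{x,z\}$ (and $x \neq z$ since $\{x,z\} \in A^{(2)}$); similarly $c \neq z$. Hence in every such tuple $\{c,y\}$ and $\{x,z\}$ are two \emph{disjoint} $2$-element subsets of $A$ with $c + y = x + z$.

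Next I would match these tuples with the data $(i, P, Q, \text{label})$, where $1 \le i \le M$, $P$ and $Q$ are distinct pairs in $S_i^2$, and "label" is a choice of one of the two elements of $P$ to play the role of $c$ (the other playing the role of $y$). Given a tuple $(c, y, \{x,z\})$ as above, set $d = c+y = x+z$; since $\{c,y\}$ and $\{x,z\}$ are disjoint pairs with sum $d$, each is a disjoint partner of the other, so $S(d) \neq \emptyset$, whence $d = d_i$ for a unique $i$ and both $\{c,y\}$ and $\{x,z\}$ lie in $S(d_i) = S_i^2$; they are distinct because they are disjoint. This yields $(i, P = \{c,y\}, Q = \{x,z\})$ together with the labelling of $c$ inside $P$. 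Conversely, from $(i, P, Q, \text{label})$ write $P = \{c,y\}$ with $c$ the labelled element and $Q = \{x,z\}$; by the observation that no element of $A$ lies in two pairs of $S_i^2$, the distinct pairs $P$ and $Q$ are disjoint, so $c \neq y$, $y \notin \{x,z\}$, and $c + y = d_i = x + z$, i.e. $c = x - y + z$, giving a tuple counted by $g_1(c)$. These two maps are visibly mutually inverse, hence a bijection.

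Finally I would count: for each fixed $i$ there are $|S_i^2|(|S_i^2|-1)$ ordered pairs $(P,Q)$ of distinct elements of $S_i^2$, and for each there are $2$ choices of the labelled element of $P$, so summing over $i$ gives $\sum_{c\in A} g_1(c) = \sum_{i=1}^{M} 2\,|S_i^2|(|S_i^2|-1)$, which is the claim. The only points that need care — and they are the heart of the verification — are that $c$ cannot coincide with $x$ or $z$ and that the pair $\{x,z\}$ is genuinely forced into some $S_i^2$, so that nothing counted by $g_1$ escapes the bijection; both follow from cancellation in $\integers_N$ and the "simple but important observation" that an element of $A$ appears in at most one pair of $S_i^2$.
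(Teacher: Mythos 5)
Your proof is correct and follows essentially the same double-counting argument as the paper: both show that $\sum_{c\in A} g_1(c)$ and $\sum_i |S_i^2|(|S_i^2|-1)$ enumerate the same family of ordered pairs of disjoint same-sum pairs in $A$, the former counting each such ordered pair twice (once for each choice of distinguished element in the first pair). The paper phrases this via an edge-colored graph on $A$ whose edges are the pairs in the $S_i^2$'s, but the content is identical; your explicit checks — that $c\notin\{x,z\}$ is forced, that both $\{c,y\}$ and $\{x,z\}$ land in the same $S_i^2$, and that distinct pairs in $S_i^2$ are automatically disjoint — simply spell out what the paper's terser proof leaves implicit.
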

\begin{proof}
Define an edge colored graph $G$ with vertex set $A$, edge set $\cup_{i=1}^{M} S_i^2$, and the color of edge 
$\{ a , b \}$ is $a + b$.  The sum $\sum_{i=1}^{M} |S_i^2| ( |S_i^2| - 1)$ counts ordered pairs 
$( \{c,y \} , \{x,z \} )$ of distinct edges of $G$ where $\{c,y \}$ and $\{x,z \}$ have the same color, i.e.\ 
$c + y = x + z$ and $c,y,x$, and $z$ are all distinct elements of $A$.  
 The sum $\sum_{c \in A} g_1 (c)$ counts each such ordered pair $( \{c,y \}, \{x,z \})$ exactly two times, one contribution 
coming from $g_1(c)$ and the other from $g_1 (y)$.      
\end{proof}  

\vspace{1em}
By Lemma~\ref{new new lemma},
\begin{equation}\label{new new eq}
\sum_{i=1}^{M} |S_i^2| ( |S_i^2| - 1) \leq \frac{1}{2} \sum_{c \in A} f(c).
\end{equation}

Next we use the following version of the Cauchy-Schwarz inequality.

\begin{lemma}[Cauchy-Schwarz]\label{cs}
If $x_1, \dots , x_n$ are real numbers, $t \in \{1, 2, \dots , n-1 \}$, and $\Delta = \frac{1}{t} \sum_{i=1}^{t} x_i 
- \frac{1}{n}  \sum_{i=1}^{n} x_i $ then 
\begin{equation*}
\sum_{i=1}^{n} x_i^2 \geq \frac{1}{n} \left( \sum_{i=1}^{n} x_i \right)^2 + \frac{ t n \Delta^2 }{n - t }.
\end{equation*}
\end{lemma}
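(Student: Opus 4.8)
The plan is to reduce to the mean-zero case and then apply the ordinary Cauchy--Schwarz inequality to two complementary blocks of indices. Write $\bar x = \frac{1}{n}\sum_{i=1}^n x_i$ and set $y_i = x_i - \bar x$, so that $\sum_{i=1}^n y_i = 0$, $\sum_{i=1}^n y_i^2 = \sum_{i=1}^n x_i^2 - \frac{1}{n}\bigl(\sum_{i=1}^n x_i\bigr)^2$, and $\Delta = \frac{1}{t}\sum_{i=1}^t y_i$. Thus the claimed inequality is equivalent to
\[
\sum_{i=1}^n y_i^2 \;\geq\; \frac{tn\Delta^2}{n-t}.
\]

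Next I would let $Q = \sum_{i=1}^t y_i = t\Delta$, so that $\sum_{i=t+1}^n y_i = -Q$ because the full sum of the $y_i$ vanishes. Applying Cauchy--Schwarz to the first $t$ terms gives $\sum_{i=1}^t y_i^2 \geq \frac{Q^2}{t}$, and to the last $n-t$ terms gives $\sum_{i=t+1}^n y_i^2 \geq \frac{Q^2}{n-t}$; here the hypothesis $t \in \{1,\dots,n-1\}$ is used so that both $t$ and $n-t$ are positive. Adding these two bounds,
\[
\sum_{i=1}^n y_i^2 \;\geq\; Q^2\left(\frac1t + \frac1{n-t}\right) \;=\; Q^2\cdot\frac{n}{t(n-t)} \;=\; \frac{t^2\Delta^2\,n}{t(n-t)} \;=\; \frac{tn\Delta^2}{n-t},
\]
which is exactly what is needed. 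Substituting back the expression for $\sum_{i=1}^n y_i^2$ in terms of the $x_i$ yields the lemma.

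There is essentially no obstacle here: the only points requiring care are the elementary identity $\sum_{i=1}^n y_i^2 = \sum_{i=1}^n x_i^2 - \frac1n\bigl(\sum_{i=1}^n x_i\bigr)^2$ and the observation $\frac1t + \frac1{n-t} = \frac{n}{t(n-t)}$. Alternatively, one could avoid centering and instead bound $\sum_{i=1}^t x_i^2 \geq \frac1t\bigl(\sum_{i=1}^t x_i\bigr)^2$ and $\sum_{i=t+1}^n x_i^2 \geq \frac1{n-t}\bigl(\sum_{i=t+1}^n x_i\bigr)^2$ directly, then check the algebraic identity $\frac{P^2}{t} + \frac{(S-P)^2}{n-t} = \frac{S^2}{n} + \frac{tn\Delta^2}{n-t}$ with $P = \sum_{i=1}^t x_i$ and $S = \sum_{i=1}^n x_i$ (clearing denominators over $tn(n-t)$, the left side minus $\frac{S^2}{n}$ has numerator $(Pn-St)^2$, matching $\frac{tn\Delta^2}{n-t}$ since $\Delta = \frac{Pn-St}{tn}$); this works equally well but requires a slightly longer expansion.
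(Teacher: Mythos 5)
Your proof is correct, and the paper itself states this lemma without proof, so there is no in-paper argument to compare against. Your centering argument is clean: once you pass to $y_i = x_i - \bar x$, the identity $\sum y_i^2 = \sum x_i^2 - \frac1n\bigl(\sum x_i\bigr)^2$ and the cancellation $\sum_{i=t+1}^n y_i = -\sum_{i=1}^t y_i$ make the two applications of ordinary Cauchy--Schwarz (on the block $\{1,\dots,t\}$ and on $\{t+1,\dots,n\}$) immediately combine via $\frac1t+\frac1{n-t}=\frac{n}{t(n-t)}$ to give exactly $\frac{tn\Delta^2}{n-t}$. The hypothesis $t\in\{1,\dots,n-1\}$ is used precisely where you say, to keep both blocks nonempty. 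Your alternate, non-centered route via the identity $\frac{P^2}{t}+\frac{(S-P)^2}{n-t}=\frac{S^2}{n}+\frac{tn\Delta^2}{n-t}$ is also correct (the numerator after clearing denominators over $tn(n-t)$ does collapse to $(Pn-St)^2$); it trades the conceptual step of centering for a short algebraic expansion. Either version is an acceptable proof of the lemma as stated.
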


A simple counting argument shows $\sum f(n) = { |A| \choose 2}( |A| - 2)$. 
Let $\sum_{c \in A} f(c) = \delta |A|^2$.  
If 
\begin{equation*}
\Delta : = \frac{1}{|A|} \sum_{c \in A} f(c)  - \frac{1}{N} \sum_n f(n) = \delta |A| - \frac{1}{N} \sum_n f(n)
\end{equation*}
then using Ruzsa's bound $|A| = O(N^{1/3})$ and $C_3^+ (N) \leq F_3^+ (N)$,
\begin{equation*}
\Delta = \delta  |A| - \frac{ { |A| \choose 2} ( |A| - 2) }{N } \geq \delta  |A| - C
\end{equation*}
where $C$ is some absolute constant.  By Lemma~\ref{cs},
\begin{eqnarray*}
\sum f(n)^2 & \geq & \frac{ { |A| \choose 2}^2 (|A| -2 )^2 }{ N } + \frac{ |A| \cdot N ( \delta |A| - C)^2 }{ N - |A| } \\
& = & \frac{ { |A| \choose 2}^2 (|A| -2 )^2 }{ N } + \delta^2  |A|^3 \frac{ \left( 1 - \frac{C}{\delta |A|} \right)^2 }{ 1 - \frac{|A|}{N} }.
\end{eqnarray*}  

By (\ref{estimate 0}) and (\ref{new new eq}), 
\begin{eqnarray*}\label{estimate}
\sum f(n)^2 & \leq & \sum f(n) + |A| \left( 2 \sum_{c \in A} f(c) + \sum_{i=1}^{M} |S_i^2| ( |S_i^2|-1) \right) +72 |A|^2 \\
& \leq &  \frac{ |A|^3 }{2} + \frac{ 5 |A| }{2} \sum_{c \in A} f(c) + 72 |A|^2 \\ 
& = & |A|^3 \left( \frac{ 1 + 5 \delta}{2} \right) + 72 |A|^2 .
\end{eqnarray*}
Combining the two estimates on $\sum f(n)^2$ gives the inequality 
\begin{equation}\label{eq 100}
\frac{ { |A| \choose 2}^2 (|A| -2 )^2 }{ N } + \delta^2 |A|^3 \frac{ \left( 1 - \frac{C}{ \delta |A|} \right)^2 }{ 1 - \frac{|A|}{N} } 
\leq  |A|^3 \left( \frac{ 1 + 5 \delta}{2} \right) + 72 |A|^2. 
\end{equation}
If $\delta = 0$ then (\ref{eq 100}) is not valid but we still get 
\begin{equation*}
\frac{  {|A| \choose 2}^2 ( |A| - 2)^2 }{N} \leq \frac{ |A|^3}{2} + 72 |A|^2
\end{equation*}
which implies $|A| \leq (1 + o(1)) (2N)^{1/3}$.  Assume $\delta > 0$.  
In this case (\ref{eq 100}) simplifies to 
\begin{equation}\label{estimate 2}
|A| \leq (1 + o(1) ) \left( 2 + 10 \delta - 4 \delta^2 \right)^{1/3} N^{1/3}.
\end{equation}
At this point we find the maximum of the right hand side of (\ref{estimate 2}) using the fact that $0 \leq \delta \leq 1+ 
\frac{7}{|A|}$ which follows from Lemma~\ref{step 2}.   
For $|A| \geq 28$, the maximum occurs when $\delta =1 + \frac{7}{|A|}$ therefore, after some simplifying, we find 
\begin{equation*}
|A| \leq (1 + o(1)) ( 8 N)^{1/3}.
\end{equation*}


\section{Proof of Theorem~\ref{ub 3}(ii)}

The proof of Theorem~\ref{ub 3}(ii) follows along the same lines as the proof of Theorem~\ref{ub 3}(i) and we will use the same 
notation as in the previous section.  The derivation of (\ref{estimate 0}) is very similar except in $\integers$ (or in $\integers_N$ with $N$ odd), there are fewer 3-term a.p.'s in $A$.  Regardless, (\ref{estimate 0}) still holds under the assumption
that $A \subset [N]$ is a $B_3^+$-set or $A \subset \integers_N$ is a $B_3^+$-set with $N$ odd.  

Next we prove a lemma that corresponds to Lemma~\ref{disjoint}.  

\begin{lemma}\label{disjoint v2}
If $x \in T_{i}^{1} \cap T_{j}^{1}$ for distinct $i$ and $j$ then either $s_i = s_j = 2$, or if $s_j > 2$ then $s_i = 2$, $s_j = 3$, and $| T_i^1 \cap T_j^1| \geq 3$.  
\end{lemma}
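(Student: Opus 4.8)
The plan is to run the argument of Lemma~\ref{disjoint} verbatim up to the application of the $B_3^+$ property, and then exploit the fact that in $\integers$ (or in $\integers_N$ with $N$ odd) the element $N/2$, on which the second half of that proof rests, does not exist. If $s_i = s_j = 2$ there is nothing to prove, so I assume $s_j > 2$ (the hypothesis and the claim are the same statement after swapping $i$ and $j$, so this is harmless once we remember to record which of the two indices turns out to equal $2$). Relabel the pairs so that $x = a_1^i = a_1^j$, write $d_i = x + b_1^i = a_2^i + b_2^i$ and $d_j = x + b_1^j = a_2^j + b_2^j = \dots = a_{s_j}^j + b_{s_j}^j$, solve for $x$ in two ways, and use $b_1^i \neq b_1^j$ (which holds because $d_i \neq d_j$). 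Exactly as in Lemma~\ref{disjoint}, the $B_3^+$ property then gives $\{a_2^i, b_2^i\} \cap \{a_\ell^j, b_\ell^j\} \neq \emptyset$ for every $\ell$ with $2 \le \ell \le s_j$. Since the list $a_1^j, b_1^j, \dots, a_{s_j}^j, b_{s_j}^j$ consists of pairwise distinct elements of $A$, the two-element set $\{a_2^i, b_2^i\}$ can meet at most two of the pairwise disjoint pairs $\{a_\ell^j, b_\ell^j\}$, so $s_j \le 3$; hence $s_j = 3$, and by symmetry $s_i \le 3$.

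Next I rule out $s_i = s_j = 3$. This is precisely the configuration analyzed in the second half of the proof of Lemma~\ref{disjoint}: the intersection statements above, together with $d_i \neq d_j$ (which prevents any of these intersections from having size $2$), force a perfect matching between $\{a_2^i, b_2^i, a_3^i, b_3^i\}$ and $\{a_2^j, b_2^j, a_3^j, b_3^j\}$, and comparing the two resulting expressions for $d_i - d_j$ yields $2z = 2u$ for two elements $z, u$ occurring in the list of $S_i^2$. In $\integers$, or in $\integers_N$ with $N$ odd, this forces $z = u$, contradicting the distinctness of that list. Therefore $s_i = 2$.

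It remains to show $|T_i^1 \cap T_j^1| \ge 3$ when $s_i = 2$ and $s_j = 3$. From the first paragraph, $\{a_2^i, b_2^i\}$ meets each of the two disjoint pairs $\{a_2^j, b_2^j\}$ and $\{a_3^j, b_3^j\}$, and since $d_i \neq d_j$ it coincides with neither; hence one of $a_2^i, b_2^i$ lies in $\{a_2^j, b_2^j\}$ and the other in $\{a_3^j, b_3^j\}$, so $a_2^i, b_2^i \in T_j^1$. Combined with $x = a_1^i \in T_i^1 \cap T_j^1$, and the observation that $x, a_2^i, b_2^i$ are three distinct elements of $T_i^1$, this gives $|T_i^1 \cap T_j^1| \ge 3$, completing the proof.

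The only part that is not entirely mechanical is the matching bookkeeping in the excluded case $s_i = s_j = 3$; but that argument is identical to the one already carried out for Lemma~\ref{disjoint}, so in the write-up I would simply invoke it and point to the single step (the passage from $2z = 2u$ to $z = u$) where the hypothesis on $N$ is what changes the outcome. Everything else is a direct transcription of Lemma~\ref{disjoint}'s proof together with the ``at most once in $S_i^2$'' observation recorded before Case 2 of Section 3.
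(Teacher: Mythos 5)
Your proof is correct and takes essentially the same route as the paper: run the argument of Lemma~\ref{disjoint} verbatim, note that $2z=2u$ now forces $z=u$ (since there is no $N/2$), and read off the three common elements by discarding $a_3^i,b_3^i$ from the configuration of Figure~1. The paper is terser (it simply invokes Lemma~\ref{disjoint} and Figure~1 for the last assertion), whereas you spell out explicitly why $a_2^i,b_2^i$ must land in $T_j^1$ and why, together with $x$, this gives $|T_i^1\cap T_j^1|\ge 3$; this is exactly the ``easy computations'' the paper alludes to.
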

\begin{proof}
The proof of this lemma is exactly the same as the proof of Lemma~\ref{disjoint} up until the point where we write the equation 
$2z = 2u$.  In $\integers$ (or $\integers_N$ with $N$ odd), this implies $z = u$ which is a contradiction 
since the elements $x, b_1^i, y, z, u, v$ are all distinct.  This allows us to conclude that $T_i^1 \cap T_j^1 = \emptyset$ for 
any $i \neq j$ with $s_i \geq 3$ and $s_j \geq 3$.  

The assertion $|T_i^1 \cap T_j^1 | \geq 3$ can be verified with some easy computations.  Alternatively, one can just ignore 
$a_3^i = u$ and $b_3^i = v$ in Figure 1 to see $|T_i^1 \cap T_j^1 | \geq 3$.  
\end{proof}

\begin{corollary}\label{disjoint c v2}
If $m + 1 \leq i < j \leq M$ then $T_{i}^{1} \cap T_{j}^{1} = \emptyset$.
\end{corollary}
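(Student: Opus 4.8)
The plan is to read this corollary off directly from Lemma~\ref{disjoint v2}, with no additional work. By the way the indices were sorted, the range $m+1 \leq i \leq M$ is exactly the set of indices with $s_i \geq 3$, so for $m+1 \leq i < j \leq M$ we have both $s_i \geq 3$ and $s_j \geq 3$. First I would argue by contradiction: suppose some $x$ lies in $T_i^1 \cap T_j^1$. Then Lemma~\ref{disjoint v2} applies to this pair of distinct indices, and since $s_j > 2$ it forces $s_i = 2$, contradicting $s_i \geq 3$. Hence no such $x$ exists and $T_i^1 \cap T_j^1 = \emptyset$, as claimed.

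There is essentially no obstacle here, since the real content was already extracted inside the proof of Lemma~\ref{disjoint v2}: there it is observed that in $\integers$ (or in $\integers_N$ with $N$ odd) the equation $2z = 2u$ forces $z = u$, which contradicts the distinctness of the elements $x, b_1^i, y, z, u, v$, so a common element of $T_i^1$ and $T_j^1$ is impossible once $s_i \geq 3$ and $s_j \geq 3$. The only minor point to watch when invoking the lemma is that its statement is phrased asymmetrically (``if $s_j > 2$\dots''), so I would either relabel $i$ and $j$ so that the hypothesis $s_j > 2$ is literally met, or simply note that the conclusion ``at least one of $s_i, s_j$ equals $2$'' is symmetric in $i$ and $j$. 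Either way the corollary follows in one line.
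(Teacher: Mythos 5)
Your argument is correct and is essentially identical to the paper's one-line proof: a common element of $T_i^1$ and $T_j^1$ would, by Lemma~\ref{disjoint v2}, force one of $s_i, s_j$ to equal $2$, which is impossible since both indices exceed $m$. The remark about handling the asymmetric phrasing of the lemma (by relabeling or by noting the symmetric consequence) is a fair observation but does not change the substance.
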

\begin{proof}
If $x \in T_i \cap T_j$ with $i \neq j$ then by Lemma~\ref{disjoint v2} one of $s_i$ or $s_j$ must be equal to 2.
\end{proof}

\vspace{1em}
 
The next lemma has no corresponding lemma from the previous section.   Lemma~\ref{not too many} will be used to estimate $\sum_{c \in A} f(c)$.        

\begin{lemma}\label{not too many}
If $A \subset [N]$ is a $B_3^+$-set or $A \subset \integers_N$ is a $B_3^+$-set and $N$ is odd then for any $a \in A$, the number of distinct $i \in \{1, 2, \dots , m \}$ such that $a \in T_i^1$ is at most $\frac{|A|}{2}$.
\end{lemma}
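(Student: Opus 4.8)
The plan is to recast the quantity as a question about the representation function of $A$ and then pin it down with the $B_3^+$ property.

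Fix $a \in A$ and let $R(d)$ be the number of two-element subsets of $A$ summing to $d$. Since $s_i = 2$ for $i \le m$, the set $S_i^2 = S(d_i)$ is the set of all (two, automatically disjoint) pairs summing to $d_i$; hence $i \mapsto d_i - a$ is a bijection from $\{i \le m : a \in T_i^1\}$ onto $B := \{\, b \in A \setminus \{a\} : R(a+b)=2 \,\}$, and it suffices to prove $|B| \le |A|/2$. For $b \in B$ let $P_b = \{u_b,v_b\}$ be the unique pair $\ne \{a,b\}$ with $u_b + v_b = a+b$; any pair through $a$ or through $b$ summing to $a+b$ equals $\{a,b\}$, so $P_b$ is disjoint from $\{a,b\}$. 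For distinct $b,b' \in B$ the equations $u_b+v_b=a+b$ and $u_{b'}+v_{b'}=a+b'$ give the $B_3^+$ relation $u_b + v_b + b' = u_{b'} + v_{b'} + b$; since $b'$ equals none of $u_{b'},v_{b'},b$ and $b$ equals none of $u_b,v_b,b'$, the $B_3^+$ property forces $P_b \cap P_{b'} \ne \emptyset$. Thus $\{P_b : b\in B\}$ is a family of distinct, pairwise-intersecting $2$-sets, so once $|B|\ge 3$ it is either a ``triangle'' $\{\{x,y\},\{y,z\},\{z,x\}\}$ or a ``star'' with common element $w$; if $|B|\le 2$ we are done, since $|B|\ge 1$ already forces $|A|\ge 4$.

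In the triangle case $|B|=3$: writing the three common sums as $a+b_1,a+b_2,a+b_3$, two of the $b_i$ being triangle vertices would force two of $x,y,z$ to coincide (dividing by $2$, legitimate in $\integers$ and in $\integers_N$ with $N$ odd), so at least two $b_i$ lie outside $\{x,y,z\}$; together with $a$ these are six distinct elements, hence $|A|\ge 6\ge 2|B|$.

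The star case is the heart of the matter. Here $P_b = \{w, b+c\}$ with $c := a-w \ne 0$, so $B+c \subseteq A$, and disjointness of $P_b$ from $\{a,b\}$ gives $B, B+c \subseteq A \setminus \{a,w\}$. I claim $B \cap (B+c) = \emptyset$. If not, then $b, b+c \in B$ for some $b$; applying the star description to $b+c$ gives $P_{b+c} = \{w, b+2c\}$, so $b+2c \in A$, and then
\[
w + w + (b+2c) = 2(a-c) + b + 2c = a + a + b
\]
is an equality of triples from $A$ with $\{w,b+2c\}$ disjoint from $\{a,b\}$ --- indeed $w\ne a$ since $c\ne 0$, $w\ne b$ since $w\in P_b$, $b+2c\ne b$ since $2c\ne 0$, and $b+2c=a$ would force $b+c=a-c=w$ and $P_b=\{w,w\}$ --- contradicting $B_3^+$. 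Hence $B$ and $B+c$ are disjoint subsets of $A\setminus\{a,w\}$, so $2|B|\le |A|-2<|A|$, as needed.

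I expect the only genuine difficulty to be producing the two $B_3^+$-violating identities above --- one for the intersection property, the crucial one for the disjointness of $B$ and $B+c$ --- since a nontrivial $3+3$ equality in a $B_3^+$-set must be rigged so no summand is shared; the rest, including the harmless division by $2$ (hence the oddness hypothesis on $N$), is routine.
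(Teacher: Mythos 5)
Your proof is correct, and it takes a genuinely different route from the paper's. You first establish that the pairs $P_b$ (in the paper's notation, the $\{b_i,c_i\}$) are \emph{pairwise} intersecting, then invoke the standard dichotomy that a family of pairwise-intersecting $2$-sets is a triangle or a star; the triangle case is a direct $6$-element count, and the star case reduces to the clean translation statement $B \cap (B+c) = \emptyset$, proved by exhibiting a single $B_3^+$-violating identity $w+w+(b+2c)=a+a+b$. The paper instead only proves intersection of each $\{b_j,c_j\}$ with the \emph{first} pair $\{b_1,c_1\}$, splits the indices into those sharing $b_1$ and those sharing $c_1$, and verifies distinctness of the $2k$ elements $a_1,\dots,a_k,c_1,\dots,c_l,b_{l+1},\dots,b_k$ via several separate $B_3^+$ deductions ($a_i \neq b_j$, $a_j \neq c_i$, $a_i \neq c_{i'}$, $a_j \neq b_{j'}$, $c_i \neq b_j$). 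Your reduction to ``$B$ and $B+c$ are disjoint subsets of $A\setminus\{a,w\}$'' packages the star case into one inequality $2|B|\le|A|-2$ and isolates exactly where the hypothesis on $N$ is used (division by $2$ in $2x=2z$ and $2c\ne 0$); the paper's version avoids appealing to the star/triangle fact and is thus more self-contained, but at the cost of a longer case-by-case verification. Both proofs rely on the same underlying mechanism: every relation $a+a_i=b_i+c_i$ forces intersections among the complementary pairs, and the oddness of $N$ (or working in $\integers$) is needed to go from $2x=2y$ to $x=y$.
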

\begin{proof}
To make the notation simpler, we suppose $a \in T_i^1$ for $1 \leq i \leq k$ and will show $k \leq \frac{|A|}{2}$.  The case when $a \in T_{i_1}^1 \cap \dots \cap T_{i_k}^1$ for some 
sequence $1 \leq i_1 < \dots < i_k \leq m$ is the same.  For this lemma we deviate 
from the notation $S_i^2 = \{ \{a_1^i , b_1^i \} , \dots \{ a_{s_i}^i , b_{s_i}^i \} \}$.  
Write $S_i^2 = \{ \{a,a_i \}, \{b_i , c_i \} \}$ and $a + a_i = b_i + c_i$ where $ 1 \leq i \leq k$ and for fixed $i$, $a, a_i , b_i$, and $c_i$ are all distinct.  Observe $a_1, \dots , a_k$ are all distinct since the sums $a+a_i$ are all distinct.  For $1 \leq i \leq k$, $a = b_i + c_i - a_i$ hence
\begin{equation*}
b_i + c_i + a_j = b_j + c_j + a_i
\end{equation*}
for any $1 \leq i , j \leq k$.  These two sums must intersect and they cannot intersect at $a_j$ or $a_i$, unless $i =j$, so for $2 \leq j \leq k$,
\begin{equation*}
\{b_1, c_1 \} \cap \{b_j, c_j \} \neq \emptyset.
\end{equation*}
Let $2 \leq j \leq l$ be the indices for which the sums intersect at $b_1$.  Let $l+1 \leq j \leq k$ be the indices for which the sums intersect at $c_1$ and let $b = b_1$ and $c = c_1$.  We have the $k$ equations
\begin{eqnarray*}
a + a_1 & = & b + c, \\
a + a_2 & = & b + c_2, \\
~~ \vdots ~~ & ~ & ~~ \vdots ~~ \\
a + a_l & = & b + c_l, \\
a + a_{l+1} & = & b_{l+1} + c, \\
~~ \vdots ~~ & ~ & ~~ \vdots ~~ \\
a + a_k & = & b_k + c.
\end{eqnarray*}

We will show $a_1, \dots , a_k, c_1, \dots , c_l, b_{l+1}, \dots , b_{k}$ are all distinct which implies $2k \leq |A|$.  

Suppose $a_i = b_j$ for some $2 \leq i \leq l$ and $l+1 \leq j \leq k$.  Then $a + b_j = a+a_i = b + c_i$ but $a = b_j + c - a_j$ so 
$b + c_i = a+ b_j = 2b_j + c - a_j$ which implies $2b_j + c = b + c_i + a_j$.  The elements $a_j, b_j$, and $c$ are all distinct so these sums cannot intersect at $a_j$.  Similarly they cannot intersect at $c$.  The only remaining possibility is $b_j = c_i$ but then $a_i = b_j = c_i$, a contradiction.  
We conclude that $a_i$ and $b_j$ are distinct for $2 \leq i \leq l$, $l+1 \leq j \leq k$.  A similar argument shows $a_j$ and $c_i$ are distinct for $l+1 \leq j \leq k$ and $2 \leq i \leq l$.  

Suppose now that $a_i = c_{i'}$ for some $2 \leq i \neq i' \leq l$.  Then $b + c_i = a + a_i = a + c_{i'} = a + (a + a_{i'} - b)$ so that 
$2b + c_i = 2a + a_{i'}$.  Since $2 \leq i' \leq l$, these sums cannot intersect at $b$ and also they cannot intersect at $a$.  If $c_i = a_{i'}$ then $a = b$ which is impossible therefore the equation $2b + c_i = 2a + a_{i'}$ contradicts the $B_3^+$ property.  
Note that $2b = 2a$ need not imply $a = b$ if $A \subset \integers_N$ with $N$ even.  We conclude 
$a_i \neq c_{i'}$ for each $2 \leq i \neq i' \leq l$.  Similarly $a_j \neq b_{j'}$ for $l+1 \leq j \neq j' \leq k$.  

The previous two paragraphs imply 
\begin{equation*}
\{ a_1, a_2 , \dots , a_k \} \cap \{c_2,c_3, \dots , c_l, b_{l+1}, b_{l+2}, \dots , b_k \} = \emptyset. 
\end{equation*}
To finish the proof we show $\{c_2, c_3, \dots , c_l \} \cap \{b_{l+1}, b_{l+2}, \dots , b_k \} = \emptyset$.  
Suppose $c_i = b_j$ for some $2 \leq i \leq l$ and $l+1 \leq j \leq k$.  Then 
\begin{equation*} 
a + a_i = b + c_i = b + b_j = b + (a + a_j - c) = b + a + a_j - (a + a_1 - b) = a_j + 2b - a_1
\end{equation*}
which implies $a + a_i + a_1 = a_j + 2b$.  Since $ i < l+1 \leq j$, these sums cannot intersect at $a_j$.  They cannot intersect at $b$ either since $a,a_i,b$, and $c_i$ are all distinct whenever $1 \leq i \leq l$.  This is a contradiction therefore $c_i \neq b_j$ for all $2 \leq i \leq l$ and $l+1 \leq j \leq k$.  
\end{proof}

\begin{lemma}\label{step 2 v2}
If $A \subset [N]$ is a $B_3^+$-set then 
\begin{equation*}
\sum_{c \in A} f(c) \leq \frac{ |A|^2}{2} + 3|A|.
\end{equation*}
\end{lemma}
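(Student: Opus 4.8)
The plan is to keep the decomposition $f(c)=g_1(c)+g_2(c)$ introduced in the proof of Lemma~\ref{step 2}, where $g_2(c)$ counts the representations $c=x-y+z$ with $c=y$ (equivalently, the non-trivial $3$-term a.p.'s $x+z=2c$ with $c\notin\{x,z\}$) and $g_1(c)$ counts those with $c\neq y$. I would first dispose of $\sum_{c\in A}g_2(c)$: this sum is exactly the number of non-trivial $3$-term a.p.'s in $A$, each counted once via its pair of outer terms. Since $2r=2r'$ forces $r=r'$ in $[N]$, the argument used to prove Lemma~\ref{ap} shows that every element of $A$ is an outer term of at most one non-trivial $3$-term a.p.; as each such a.p. has two outer terms, $\sum_{c\in A}g_2(c)\le |A|/2$.

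The heart of the matter is bounding $\sum_{c\in A}g_1(c)$. Here I would use the description of $g_1(c)$ from the proof of Lemma~\ref{step 2}: to produce a solution counted by $g_1(c)$ one picks an index $i$ with $c\in T_i^1$ and then a pair of $S_i^2$ other than the one containing $c$, so $g_1(c)=\sum_{i:\,c\in T_i^1}(|S_i^2|-1)$. Split this over the indices $i\in\{1,\dots,m\}$ (where $s_i=2$) and $i\in\{m+1,\dots,M\}$ (where $s_i\ge 3$). By Corollary~\ref{disjoint c v2} the sets $T_i^1$ with $i>m$ are pairwise disjoint, so $c$ lies in at most one of them, say $T_j^1$. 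If $c$ lies in no $T_i^1$ with $i\le m$, then $g_1(c)=s_j-1\le |A|/2-1$, because the $s_j$ pairs of $S_j^2$ are disjoint and hence $2s_j\le|A|$. If instead $c$ lies in some $T_i^1$ with $i\le m$, then Lemma~\ref{not too many} bounds the number of such $i$ by $|A|/2$, each contributing $1$; and for the index $j>m$ (if any) we have $c\in T_i^1\cap T_j^1$ with $s_i=2<s_j$, so Lemma~\ref{disjoint v2} forces $s_j=3$, contributing at most $2$. In all cases $g_1(c)\le |A|/2+2$, whence $\sum_{c\in A}g_1(c)\le |A|^2/2+2|A|$.

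Putting the two pieces together gives $\sum_{c\in A}f(c)\le |A|^2/2+2|A|+|A|/2\le |A|^2/2+3|A|$, as claimed. The only real subtlety, and the step I expect to require the most care, is ruling out the scenario in which $c$ simultaneously belongs to many two-pair systems $S_i^2$ \emph{and} to a large system $S_j^2$: it is precisely Lemma~\ref{disjoint v2} that prevents this, and it is the reason the leading constant improves from the $1$ of Lemma~\ref{step 2} to $1/2$ over $\integers$.
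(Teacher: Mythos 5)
Your proof is correct and in the same spirit as the paper's: you use the same decomposition $f = g_1 + g_2$, the same auxiliary objects $S_i^2$ and $T_i^1$, and the same lemmas (Corollary~\ref{disjoint c v2}, Lemma~\ref{disjoint v2}, Lemma~\ref{not too many}). The difference is in the case where $c$ lies in a three-pair system $T_j^1$ and simultaneously in several two-pair systems $T_i^1$. The paper handles this by a somewhat ad hoc count of $3$-subsets of $T_j^1$, arriving at $g_1(c) \le 2 + 3\binom{6}{3}$, which is then bounded by $|A|/2$ only for $|A|$ large; you instead apply Lemma~\ref{not too many} uniformly, accept the additive $+2$ from $S_j^2$ (so $g_1(c) \le |A|/2 + 2$ unconditionally), and absorb the resulting excess $2|A|$ in $\sum_{c} g_1(c)$ by observing that over $\integers$ the argument of Lemma~\ref{ap} in fact gives $\sum_c g_2(c) \le |A|/2$, rather than the $3|A|$ the paper cites. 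Your route is cleaner and removes the paper's implicit assumption that $|A|$ be large; both arrive at the same final constant. One small correction to your closing remark: the drop in the leading coefficient from $1$ (in Lemma~\ref{step 2}) to $1/2$ here is owed primarily to Lemma~\ref{not too many}, which replaces the trivial estimate of at most $|A|$ relevant systems by at most $|A|/2$; Lemma~\ref{disjoint v2} is also needed, but its analogue Lemma~\ref{disjoint} is already available when $N$ is even, so it is not the source of the improvement.
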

\begin{proof}
As before we write $f$ as a sum of the simpler functions $g_1$ and $g_2$.  Recall for $c \in A$,  
\begin{equation*}
g_1 (c) = \# \left\{ ( \{x,z \} , y ) \in A^{(2)} \times A : c = x - y + z, c \neq y, \{x,z \} \cap \{y \} = \emptyset \right\}
\end{equation*}
and 
\begin{equation*}
g_2(c) = \# \left\{ ( \{x,z \} , y ) \in A^{(2)} \times A : c = x - y + z, c = y, \{x,z \} \cap \{y \} = \emptyset \right\}.
\end{equation*}
Again for each $c \in A$, $f(c) = g_1 (c) + g_2 (c)$.  The sum $\sum_{c \in A} g_2 (c)$ is exactly the number of non-trivial 3-term a.p.'s in $A$ and by Lemma~\ref{ap} this is at most $3|A|$. 

If $c \notin T_1^1 \cup \dots \cup T_M^1$ then the equation $c+y = x+z$ with $c,y,x$, and $z$ all distinct has no solutions in $A$ so $g_1 (c) = 0$.  Assume $c \in T_1^1 \cup \dots \cup T_M^1$.  

\noindent
\textbf{Case 1:}  $c \notin T_1^1 \cup \dots \cup T_m^1$.

By Corollary~\ref{disjoint c v2}, there exists a unique $j$ with $c \in T_j^1$ and $m+1 \leq j \leq M$.  For such a $j$ we have $|S_j^2| \leq \frac{ |A|}{2}$ again by Corollary~\ref{disjoint c v2}.  There is a unique 
pair in $S_j^2$ that contains $c$ so $y$ is determined and there are at most $\frac{ |A| }{2}$ choices for the pair 
$\{x,z \} \in S_j^2 \backslash \{c,y \}$ so 
$g_1(c) \leq \frac{ |A|}{2}$.  

\noindent
\textbf{Case 2:}  $c \in T_1^1 \cup \dots \cup T_m^1$.

First assume $c \notin T_{m+1}^1 \cup \dots \cup T_M^1$.  A solution to $c+y = x+z$ with $c,y,x$, and $z$ all distinct corresponds to a choice of an $S_i^2$ with $1 \leq i \leq m$ and $c \in T_i^1$.  By Lemma~\ref{not too many}, $c$ is in at most $\frac{ |A|}{2}$ $T_i^1$'s and so $g_1 (c) \leq \frac{ |A|}{2}$.  

Lastly suppose $c \in T_{m+1}^1 \cup \dots \cup T_M^1$.  There exists a unique $j$ with $c \in T_j^1$ and $m+1 \leq j \leq M$.  Furthermore for this $j$, $|T_j^1 | = 6$ by Lemma~\ref{disjoint v2}.  If $c \in T_i^1$ with $1 \leq i \leq m$ then, again by Lemma~\ref{disjoint v2}, $|T_i^1 \cap T_j^1 | \geq 3$.  There are ${6 \choose 3}$ 3-subsets of $T_j^1$ and given such a 3-subset there are ${3 \choose 1}$ ways to pair up an element in the 3-subset with $c$ in $S_i^2$.  This implies $c$ is in at most $3 {6 \choose 3}$~$S_i^2$'s with $1 \leq i \leq m$ so 
$g_1 (c) \leq 2 +  3{6 \choose 3} \leq \frac{ |A| }{2}$.  The 2 comes from choosing one of the two pairs in $S_j^2 \backslash \{c,y \}$.    
\end{proof}   

\vspace{1em}

The rest of the proof of Theorem~\ref{ub 3}(ii) is almost identical to that of Theorem~\ref{ub 3}(i).  
If $\sum_{c \in A} f(c) = \delta |A|^2$ then by  (\ref{estimate 0}) and (\ref{new new eq}),
\begin{equation*}
\sum f(n)^2 \leq |A|^3 \left( \frac{1 + 5 \delta}{2} \right) + O( |A|^2).
\end{equation*}

We use the same version of the Cauchy-Schwarz inequality to get 
\begin{equation}\label{another new eq}
\frac{   { |A| \choose 2}^2 ( |A| - 2)^2 }{3N} + \delta^2 |A|^3 \frac{  \left( 1 - \frac{C}{\delta |A|} \right) }{1 - \frac{ |A|}{3N} } 
\leq |A|^3  \left( \frac{1 + 5 \delta}{2} \right) + O( |A|^2).
\end{equation}
If $\delta = 0$ then 
\begin{equation*}
\frac{  { |A| \choose 2}^2 (|A| - 2)^2 }{3N} \leq \frac{ |A|^3}{2} + O (|A|^2)
\end{equation*}
which implies $|A| \leq (1 + o(1))(6N)^{1/3}$.  Assume $\delta > 0$.  Then (\ref{another new eq}) simplifies to 
\begin{equation*}
|A| \leq (1 + o(1)) ( 6 + 30 \delta - 12 \delta^2 )^{1/3} N^{1/3}.
\end{equation*}
By Lemma~\ref{step 2 v2}, $0 \leq \delta \leq \frac{1}{2} +  \frac{3}{|A|}$.  The maximum occurs when 
$\delta = 1 + \frac{3}{ |A|}$ and we get 
\begin{equation*}
|A| \leq (1 + o(1))(18N)^{1/3}.
\end{equation*}

If we were working in $\integers_N$ with $N$ odd then in (\ref{another new eq}), the $3N$ can be replaced by $N$ and some simple calculations show that we get Theorem~\ref{ub 3}(i) in the odd case.  We actually obtain the upper bound 
$|A| \leq (1 + o(1))(6 N)^{1/3}$ when $A \subset \integers_N$ is a $B_3^+$-set and $N$ is odd.


\section{Proof of Theorem~\ref{ub 3}(iii)}


Let $A \subset [N]$ be a $B_4^+$-set.  For $n \in [-2N,2N]$, define
\begin{eqnarray*}
f(n) = \# \{ ( \{a_1,a_2 \} , \{b_1, b_2 \} ) & \in &  A^{ (2) } \times A^{(2)} : a_1 + a_2 - b_1 - b_2 = n, \\
& ~ & \{a_1,a_2 \} \cap \{b_1, b_2 \} = \emptyset \}. 
\end{eqnarray*} 

\begin{lemma}\label{l 0}
If $A \subset [N]$ is a $B_4^+$-set then $A$ is a $B_2$-set.
\end{lemma}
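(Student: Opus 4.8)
The plan is to turn a putative $B_2$-relation into a $B_4^+$-relation by doubling. Suppose $a_1 + a_2 = b_1 + b_2$ with $a_1, a_2, b_1, b_2 \in A$; the goal is to show that $(a_1, a_2)$ is a permutation of $(b_1, b_2)$. Multiplying this equation by $2$ gives $a_1 + a_1 + a_2 + a_2 = b_1 + b_1 + b_2 + b_2$, an equality of two sums of four elements of $A$. Repetitions among the summands are allowed here, which is harmless: the $B_4^+$ definition places no distinctness requirement on the $a_i$ or the $b_j$.

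Next I would apply the $B_4^+$ property to the doubled relation. It forces one of the four summands on the left to equal one of the four summands on the right, that is, $a_i = b_j$ for some $i, j \in \{1, 2\}$. After relabelling the pairs if necessary, we may assume $a_1 = b_1$. Cancelling $a_1 = b_1$ in the original equation $a_1 + a_2 = b_1 + b_2$ then gives $a_2 = b_2$, so $(a_1, a_2)$ is a permutation of $(b_1, b_2)$ and $A$ is a $B_2$-set.

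I do not anticipate any real obstacle: the argument is essentially a one-line application of the hypothesis once the doubling trick is in place, and the only point requiring (minimal) care is that the $B_4^+$ definition permits the $a_i$ and $b_j$ to repeat and to coincide, which is exactly what legitimises the doubling. It is worth noting that this shortcut is special to deducing $B_2$ from $B_4^+$: doubling a longer additive equation only reduces the number of terms by one after a single cancellation, leaving a relation that the $B_4^+$ hypothesis alone cannot control, so one should not expect an analogous $B_{2k}^+ \Rightarrow B_k$ statement from the same trick.
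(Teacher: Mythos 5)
Your proof is correct and uses essentially the same idea as the paper: double the putative $B_2$-relation to $a_1+a_1+a_2+a_2 = b_1+b_1+b_2+b_2$, invoke the $B_4^+$ property to force some $a_i = b_j$, and then cancel in the original equation. The paper phrases it as a contradiction (if $\{a_1,a_2\}\cap\{b_1,b_2\}=\emptyset$ then $2(a_1+a_2)=2(b_1+b_2)$ violates $B_4^+$), but the underlying argument is identical.
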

\begin{proof}
Suppose $a+b = c+d$ with $a,b,c,d \in A$.  If $\{a,b \} \cap \{c,d \} = \emptyset$ then the equation $2(a+b) = 2(c+d)$ contradicts the $B_4^+$ property so $\{a, b \} \cap \{c,d \} \neq \emptyset$.  Since $a+b = c+d$ and $\{a,b\} \cap \{c,d \} \neq \emptyset$, we have $\{a,b \} = \{ c , d \}$.
\end{proof}

\begin{lemma}\label{l 1}
If $A \subset [N]$ is a $B_4^+$-set then for any $n$, $f(n) \leq 2|A|$.
\end{lemma}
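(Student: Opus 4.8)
The plan is to pass from arithmetic to graph theory using Lemma~\ref{l 0}, and then run a case analysis governed by the matching numbers of two auxiliary graphs. Fix $n$. Since $A$ is a $B_2$-set, each value $s$ is the sum of at most one pair in $A^{(2)}$, so a tuple counted by $f(n)$ is determined by its ``plus pair'' $\{a_1,a_2\}$ (the ``minus pair'' being the unique pair with sum $a_1+a_2-n$), and equally by its minus pair. Let $G_a$ be the graph on vertex set $A$ whose edges are the plus pairs that occur, $G_b$ the graph of minus pairs; then $f(n)=|E(G_a)|=|E(G_b)|$ and there is a bijection $\phi\colon E(G_a)\to E(G_b)$ with $e\cap\phi(e)=\emptyset$. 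For distinct $e,\tilde e\in E(G_a)$, subtracting $\mathrm{sum}(e)-\mathrm{sum}(\phi(e))=n=\mathrm{sum}(\tilde e)-\mathrm{sum}(\phi(\tilde e))$ produces a $B_4^+$ equation whose only admissible coincidences (the within-tuple ones being excluded by disjointness) force $e\cap\tilde e\neq\emptyset$ or $\phi(e)\cap\phi(\tilde e)\neq\emptyset$. Equivalently, $\phi$ sends every matching of $G_a$ to a pairwise-intersecting family of edges of $G_b$ -- hence a star or a triangle -- and symmetrically for $\phi^{-1}$.

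Now split on the matching numbers $\mu_a$ of $G_a$ and $\mu_b$ of $G_b$. If $\min(\mu_a,\mu_b)\le 2$, the Erd\H{o}s--Gallai theorem gives $f(n)=|E(G_a)|\le\max\{10,2|A|-3\}\le 2|A|$. If $\max(\mu_a,\mu_b)\ge 4$, say $\mu_a\ge 4$, take a matching $e_1,\dots,e_4$ in $G_a$: its image is a pairwise-intersecting family of four distinct edges, hence a star through a vertex $v$, and since every edge of $G_a$ is disjoint from at least two of $e_1,\dots,e_4$, a short check shows that all but a bounded number of the edges $\phi(e)$ pass through $v$; thus $f(n)\le\deg_{G_b}(v)+O(1)\le|A|+O(1)\le 2|A|$. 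There remains the case $\mu_a=\mu_b=3$. Choose matchings $e_1,e_2,e_3$ in $G_a$ and $e_1^{*},e_2^{*},e_3^{*}$ in $G_b$; then $\{\phi(e_1),\phi(e_2),\phi(e_3)\}$ is a star or a triangle, and likewise $\{\phi^{-1}(e_1^{*}),\phi^{-1}(e_2^{*}),\phi^{-1}(e_3^{*})\}$. If either is a star, the previous argument (with $3$ in place of $4$) again gives $f(n)\le|A|+O(1)$. If both are triangles, then, using that every edge of $G_b$ is disjoint from at least one of $e_1^{*},e_2^{*},e_3^{*}$ and dually, one deduces that every edge of $G_a$ meets the three-vertex set $W$ spanned by $\{\phi^{-1}(e_i^{*})\}$ and every edge of $G_b$ meets the three-vertex set $U$ spanned by $\{\phi(e_i)\}$.

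The crux is exactly this ``double triangle'' configuration: here the naive vertex-cover bound only yields $f(n)\le 3|A|$, so graph theory alone does not suffice and the Sidon structure must be reinvoked. The key observation is that a nonzero integer has at most one representation as a difference of two elements of $A$ (again by Lemma~\ref{l 0}). For each $j$, pick $w\in e_j\cap W$ and $u\in\phi(e_j)\cap U$ and write $e_j=\{w,t\}$, $\phi(e_j)=\{u,r\}$; the defining equation gives $t-r=n-w+u$, a value depending only on $(w,u)$ and nonzero (otherwise $t=r$, contradicting $e_j\cap\phi(e_j)=\emptyset$), so $t$ and $r$ -- hence $e_j$, $\phi(e_j)$ and $j$ -- are determined by $(w,u)$. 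Therefore $j\mapsto(w,u)$ is injective into $W\times U$ and $f(n)\le 9\le 2|A|$. Assembling the cases gives $f(n)\le 2|A|$; the finitely many small values of $|A|$ not covered by the $O(1)$ estimates are verified by inspection. I expect the double-triangle case to be the only real obstacle, and the point worth isolating is that it is handled not by a sharper extremal-graph bound but by using the $B_2$ property a second time, on differences rather than sums.
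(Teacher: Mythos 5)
Your proof takes a genuinely different route from the paper's. The paper's argument is local and short: fix one tuple counted by $f(n)$; the $B_4^+$ property forces any other counted tuple to share an element with the fixed one either in the plus pair or in the minus pair, and by Lemma~\ref{l 0} (the $B_2$ property) each such coincidence determines the other tuple once one further free element is chosen, giving a linear bound. Your argument is global: you encode the tuples as a bijection $\phi$ between two graphs $G_a,G_b$ on $A$, observe that the $B_4^+$ condition forces $\phi$ to carry disjoint edges of $G_a$ to intersecting edges of $G_b$, and then split on the matching numbers $\mu_a,\mu_b$ using Erd\H{o}s--Gallai and the star-or-triangle structure of pairwise-intersecting edge families. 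The ``double triangle'' obstruction, and the way you close it off with a second use of the $B_2$ property (this time on differences), is a genuinely interesting structural dichotomy that the paper's direct count never surfaces; and for large $|A|$ your argument in fact gives the slightly sharper $f(n)\le 2|A|-3$, with the Erd\H{o}s--Gallai case $\min(\mu_a,\mu_b)\le 2$ the unique bottleneck.

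The one weak point is the sentence about small $|A|$ being ``verified by inspection.'' In the subcase $\mu_a=\mu_b=3$ with one of the two images a star through a vertex $v$, the bound your method yields is $\deg_{G_b}(v)$ plus an additive constant near $15$: at most $3$ edges whose image avoids $v$ are forced to equal some $\{x_i,x_j\}$ with $\{i,j\}\subset\{1,2,3\}$, and at most $12$ further edges of $G_a$ meet two of $e_1,e_2,e_3$ (those disjoint from exactly one $e_i$), so $f(n)\le |A|+14$. This exceeds $2|A|$ for all $|A|\le 13$, while the trivial $\binom{|A|}{2}\le 2|A|$ only covers $|A|\le 5$. Thus in the range $6\le|A|\le 13$ something beyond what you have written is needed (for $|A|=6$ with a perfect matching one must rule out $f(n)$ as large as $15$), and that is not an ``inspection.'' Note that the analogous subcase $\max(\mu_a,\mu_b)\ge 4$ is safe, since a $4$-matching forces $|A|\ge 8$ and the constant there is only $6$. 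For the paper's downstream use the issue is cosmetic, since $f(n)\le 2|A|+O(1)$ feeds into a bound that already carries a $(1+o(1))$; but the lemma as you state it has no error term, so you should either handle $6\le|A|\le 13$ honestly or weaken the statement by an additive constant.
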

\begin{proof}
Suppose $f(n) \geq 1$.  Fix a tuple $( \{a_1,a_2 \} , \{b_1,b_2 \})$ counted by $f(n)$.  Let $( \{c_1,c_2 \} , \{d_1,d_2 \})$ be another tuple counted by $f(n)$, not necessarily different from 

\noindent
$( \{a_1,a_2 \} , \{b_1,b_2 \})$.  Then $a_1 + a_2 - b_1 - b_2 = c_1 + c_2 - d_1 - d_2$ so 
\begin{equation}\label{eq 1}
a_1 + a_2 + d_1 + d_2 = c_1 + c_2 + b_1 + b_2.
\end{equation}
By the $B_4^+$ property, $\{a_1,a_2, d_1 ,d_2 \} \cap \{ c_1 , c_2 , b_1 , b_2 \} \neq \emptyset$.  In order for this intersection to be non-empty, it must be the case that $\{a_1,a_2 \} \cap \{c_1,c_2 \} \neq \emptyset$ or $\{b_1,b_2 \} \cap \{d_1,d_2 \} \neq \emptyset$.  

\noindent
\textbf{Case 1:}  $\{a_1,a_2 \} \cap \{c_1,c_2 \} \neq \emptyset$.

Assume $a_1 = c_1$.  There are 
at most $|A|$ choices for $c_2$ so we fix one.  The equality $a_1 = c_1$ and (\ref{eq 1}) imply 
\begin{equation}\label{eq 2}
d_1 + d_2 = b_1 + b_2 + c_2 - a_2.
\end{equation}
The right hand side of (\ref{eq 2}) is determined so that by Lemma~\ref{l 0} there is at most one pair $\{d_1,d_2 \}$ such that (\ref{eq 2}) holds.   

\noindent
\textbf{Case 2:}  $\{a_1,a_2 \} \cap \{c_1,c_2 \} = \emptyset$ and $\{b_1,b_2 \} \cap \{d_1,d_2 \} \neq \emptyset$.

Again there is no loss in assuming
$b_1 = d_1$.  There are at most $|A|$ choices for $d_2$ so fix one.  The equality $b_1 = d_1$ and (\ref{eq 1}) imply 
\begin{equation}\label{eq 3}
c_1 + c_2 = a_1 + a_2 - b_2 + d_2.
\end{equation}
The right hand side of (\ref{eq 3}) is determined and there is at most one pair $\{c_1,c_2 \}$ satisfying (\ref{eq 3}) as before.

Putting the two possibilities together we get at most $2|A|$ solutions $( \{c_1,c_2 \} , \{d_1 , d_2 \})$ and we have also 
accounted for the solution $( \{a_1,a_2 \} , \{b_1 , b_2 \})$ in our count so $f(n) \leq 2 |A|$.
\end{proof}

\begin{lemma}\label{l 2}
If $A \subset [N]$ is a $B_4^+$-set then 
\begin{equation}\label{eq 4}
\sum f(n) ( f(n) - 1) \leq 2|A| \sum_{n \in A - A} f(n).
\end{equation}
\end{lemma}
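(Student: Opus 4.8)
The plan is to read the left side of (\ref{eq 4}) as a count of \emph{ordered pairs of distinct tuples} and to inject such pairs into a set of size $2|A|\sum_{n\in A-A}f(n)$. Write $T=(\{a_1,a_2\},\{b_1,b_2\})$ and $T'=(\{c_1,c_2\},\{d_1,d_2\})$ for two tuples counted by $f(n)$ for the same $n$ with $T\neq T'$; the sum $\sum_n f(n)(f(n)-1)$ counts exactly these ordered pairs $(T,T')$. From $a_1+a_2-b_1-b_2=c_1+c_2-d_1-d_2$ we obtain (\ref{eq 1}), and the $B_4^+$ property forces $\{a_1,a_2\}\cap\{c_1,c_2\}\neq\emptyset$ or $\{b_1,b_2\}\cap\{d_1,d_2\}\neq\emptyset$. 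I will treat these two (mutually exclusive, jointly exhaustive) situations separately, mirroring the case split of Lemma~\ref{l 1}.

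Suppose first $\{a_1,a_2\}\cap\{c_1,c_2\}\neq\emptyset$. If this intersection had size two then $\{a_1,a_2\}=\{c_1,c_2\}$, and (\ref{eq 1}) together with Lemma~\ref{l 0} would give $\{b_1,b_2\}=\{d_1,d_2\}$, i.e.\ $T=T'$; so the intersection is a single element $u$, and after relabelling $u=a_1=c_1$ with $a_2\neq c_2$. Cancelling $u$ from (\ref{eq 1}) gives $a_2+d_1+d_2=c_2+b_1+b_2$, hence
\[
(b_1+b_2)-(d_1+d_2)=a_2-c_2=:m\in A-A.
\]
I claim the tuple $(\{b_1,b_2\},\{d_1,d_2\})$ is counted by $f(m)$; the only point to verify is that $\{b_1,b_2\}\cap\{d_1,d_2\}=\emptyset$. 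If not, say $b_1=d_1$; then $m=b_2-d_2=a_2-c_2$, so $a_2+d_2=b_2+c_2$, and Lemma~\ref{l 0} gives either $a_2=b_2$ (impossible, as $\{a_1,a_2\}\cap\{b_1,b_2\}=\emptyset$) or $a_2=c_2$ (contrary to $a_2\neq c_2$). Thus $(\{b_1,b_2\},\{d_1,d_2\})$ is an $f(m)$-tuple with $m\in A-A$. Now send $(T,T')\mapsto\bigl(u,(\{b_1,b_2\},\{d_1,d_2\})\bigr)\in A\times\{f\text{-tuples whose index lies in }A-A\}$. This is injective: from the image we recover $m=(b_1+b_2)-(d_1+d_2)$, which is nonzero since an $f$-tuple has disjoint pairs and hence, by Lemma~\ref{l 0}, distinct pair-sums; then $a_2,c_2$ are the unique pair of elements of $A$ with difference $m$ (Lemma~\ref{l 0} again), and $T,T',n$ follow. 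Since $\sum_{n\in A-A}f(n)$ equals the number of $f$-tuples with index in $A-A$, the number of pairs $(T,T')$ in this case is at most $|A|\sum_{n\in A-A}f(n)$.

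The second case, $\{a_1,a_2\}\cap\{c_1,c_2\}=\emptyset$ and $\{b_1,b_2\}\cap\{d_1,d_2\}\neq\emptyset$, is symmetric: the intersection of $\{b_1,b_2\}$ and $\{d_1,d_2\}$ is a single element $v$ (else $T=T'$ as before), cancelling it from (\ref{eq 1}) gives $(a_1+a_2)-(c_1+c_2)=b_2-d_2\in A-A$, so that $(\{a_1,a_2\},\{c_1,c_2\})$ — whose pairs are disjoint by hypothesis — is an $f$-tuple with index in $A-A$, and $(T,T')\mapsto\bigl(v,(\{a_1,a_2\},\{c_1,c_2\})\bigr)$ is injective by the same Sidon argument. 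This contributes at most another $|A|\sum_{n\in A-A}f(n)$; adding the two cases yields (\ref{eq 4}). The step demanding the most care is checking that the ``complementary'' tuple assembled from $T$ and $T'$ genuinely has disjoint pairs (so that it is counted by $f$) and that the passage to the single difference $m$ loses no information; both points rest on $A$ being a Sidon set, Lemma~\ref{l 0}.
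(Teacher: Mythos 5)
Your proof is correct and follows essentially the same route as the paper's: split on which of $\{a_1,a_2\}\cap\{c_1,c_2\}$, $\{b_1,b_2\}\cap\{d_1,d_2\}$ is nonempty, show the two cannot both be nonempty via the Sidon property (Lemma~\ref{l 0}), and count the remaining data as a choice of the common element ($|A|$ ways) together with an $f$-tuple indexed by an element of $A-A$. The only cosmetic difference is that you package the count as an explicit injection and verify one or two small points (that the intersection has size exactly one, that $m\neq 0$) that the paper leaves implicit.
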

\begin{proof}
The left hand side of (\ref{eq 4}) counts the number of ordered tuples 
\begin{equation*}
\left(  ( \{a_1,a_2 \} , \{b_1,b_2 \} ) , ( \{c_1,c_2 \} , \{d_1,d_2 \} ) \right)
\end{equation*}
such that $( \{a_1,a_2 \} , \{b_1,b_2 \} ) \neq ( \{c_1,c_2 \} , \{d_1,d_2 \} )$ and both tuples are counted by $f(n)$.  Equation (\ref{eq 1}) holds for these tuples and as before we consider two cases.  

\noindent
\textbf{Case 1:} $\{a_1,a_2 \} \cap \{c_1 , c_2 \} \neq \emptyset$.

Assume $a_1 = c_1$ so that 
$a_2 - c_2 = b_1 + b_2 - d_1 - d_2$.  

If $\{b_1, b_2 \} \cap \{d_1,d_2 \} \neq \emptyset$, say $b_1 = d_1$, then 
$a_2  - c_2 = b_2 - d_2$.  We can rewrite this equation as $a_2 + d_2 = b_2 + c_2$ so that $\{a_2, d_2 \} = \{b_2,c_2 \}$.  Since 
$\{a_1,a_2 \} \cap \{b_1,b_2 \} = \emptyset$, it must be the case that $a_2 = c_2$ and $d_2 = b_2$ but this contradicts the fact that 
the tuples are distinct.  We conclude $\{b_1, b_2 \} \cap \{d_1,d_2 \} = \emptyset$. 

There are $|A|$ choices for the element $a_1 = c_1$ and we fix one.  Since $a_2-c_2 = b_1 + b_2 - d_1 - d_2$ and 
$\{b_1, b_2 \} \cap \{d_1,d_2 \} = \emptyset$, there are $f(a_2 - c_2)$ ways to choose $\{b_1,b_2 \}$ and $\{d_1,d_2 \}$.  Also observe that each $n \in A - A$ with $n \neq 0$ has a unique representation as 
$n = a_2 - c_2$ with $a_2, c_2 \in A$.  This follows from the fact that $A$ is a $B_2$-set.       

\noindent
\textbf{Case 2:}  $\{a_1,a_2 \} \cap \{c_1 , c_2 \} = \emptyset$ and $\{b_1,b_2 \} \cap \{d_1,d_2 \} \neq \emptyset$.
 
The argument in this case is essentially the same as that of Case 1.  

Putting the two cases together gives the lemma.

\end{proof} 

\vspace{1em}

Observe $\sum f(n) = { |A| \choose 2} { |A| - 2 \choose 2}$.   
Using Cauchy-Schwarz, Lemma~\ref{l 2}, and Lemma~\ref{l 1}, 
\begin{eqnarray*}
\frac{  \left(  { |A| \choose 2} {|A| - 2 \choose 2} \right)^2 }{ 4N} & \leq & \sum f(n)^2 \leq { |A| \choose 2} { |A| - 2 \choose 2} 
+ 2 |A| \sum_{n \in A - A} f(n) \\
& \leq & \frac{ |A|^4 }{4} + 2 |A||A - A| \cdot 2|A| \\
& \leq & \frac{ |A|^4 }{4} + 4 |A|^4 = \frac{17 |A|^4 }{4}.
\end{eqnarray*}  
After rearranging we get
\begin{equation*}
|A| \leq (1 + o(1)) ( 16 \cdot 17 N)^{1/4} = (1 + o(1)) ( 272 N )^{1/4}.
\end{equation*}


\section{Proof of Theorem~\ref{upper bounds}}


\begin{lemma}\label{1.1 lemma 1}
Let $A$ be a $B_k^+$-set with $k \geq 4$.  If $k = 2l$ then there is a subset $A' \subset A$ such that 
$A'$ is a $B_l^+$-set and $|A'| \geq |A| - 2l$.  If $k = 2l+1$ then there is a subset $A' \subset A$ such that 
$|A'| \geq |A| - 2k$ and $A'$ is either a $B_l^+$-set or a $B_{l+1}^+$-set.
\end{lemma}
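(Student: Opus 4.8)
The plan is to pass from $A$ to a subset $A'$ by deleting the few elements that are "responsible" for the obstruction, so that within $A'$ a sum of $l$ (or $l+1$) elements equalling another such sum forces a common term. I would first treat the even case $k = 2l$. Suppose $a_1 + \dots + a_l = b_1 + \dots + b_l$ with all $a_i, b_j \in A'$; to invoke the $B_k^+$ property I want to pad both sides with the same $l$ elements so that the padded equation becomes a genuine $B_k^+$ relation with $\{a_1,\dots,a_l,x_1,\dots,x_l\} \cap \{b_1,\dots,b_l,x_1,\dots,x_l\} \neq \emptyset$ yielding a collision among the original terms. The subtlety is that the common term coming out of the $B_k^+$ property could be one of the padding elements $x_t$; that happens only if some $x_t$ equals some $b_j$ (or $a_i$). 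So the idea is to choose the padding set $X = \{x_1,\dots,x_l\} \subset A$ to be a \emph{fixed} set of size $l$, and then delete $X$ from $A$: setting $A' = A \setminus X$ guarantees no $a_i$ or $b_j$ lies in $X$, hence the collision must be between $\{a_i\}$ and $\{b_j\}$, which is exactly the $B_l^+$ conclusion. This gives $|A'| \geq |A| - l$, which is even better than claimed; I would keep the weaker bound $|A| - 2l$ since nothing is lost and it matches the statement (one can also be slightly more generous with the padding to absorb the possibility $l > |A|$, but for the range of interest $|A|$ is large).

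For the odd case $k = 2l+1$ the same padding trick has a parity mismatch: from an $l$-fold equality I can only pad up to $2l$ terms, not $2l+1$. The fix is to break into two sub-cases according to how I pad. If I pad one side with $l$ fixed elements and the other with $l+1$ fixed elements, the two sides have different cardinalities and cannot be compared directly. Instead I would argue: take a fixed set $X \subset A$ with $|X| = l$ and a fixed element $w \in A \setminus X$, and consider two candidate subsets, $A'_1 = A \setminus (X \cup \{w\})$ tested as a $B_l^+$-set and $A'_2 = A \setminus (X \cup \{w\})$ tested as a $B_{l+1}^+$-set. For $A'_1$: given $a_1 + \dots + a_l = b_1 + \dots + b_l$ in $A'_1$, pad both sides by $X$ and additionally the left by $w + w$ versus... no — here is the real move: pad the left side by $X \cup \{w\}$ and the right side by $X \cup \{w\}$ as well; that is a valid $B_{2l+1}^+$ relation since both sides now have $2l+1 = k$ terms, and since $X \cup \{w\}$ is disjoint from $A'_1$ the collision is between the original $a_i$ and $b_j$. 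So in fact $A'_1$ alone already works as a $B_l^+$-set with $|A'_1| \geq |A| - (l+1) \geq |A| - 2k$, and the disjunction in the statement ("either $B_l^+$ or $B_{l+1}^+$") is only needed because the author may want a uniform statement; I expect the actual argument uses whichever is convenient, and I would present the $B_l^+$ version and remark that the same padding with one fewer deleted element gives the $B_{l+1}^+$ version when one pads an $(l+1)$-fold equality by $l$ fixed elements — again fixed and deleted, so the collision survives.

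The main obstacle — really the only point requiring care — is making sure the common term extracted from the $B_k^+$ property is not one of the padding elements. This is handled entirely by the "fix a padding set and delete it" device, at the cost of only $O(k)$ elements, which is why the bounds $|A| - 2l$ and $|A| - 2k$ appear. A secondary, purely bookkeeping point is the degenerate regime where $|A|$ is smaller than the number of elements to be deleted; there the statement is vacuous (one can take $A' = \emptyset$ or a singleton, which is trivially a $B_l^+$-set), so no genuine difficulty arises. I would also double-check that deleting elements preserves nothing that could go wrong: a subset of a $B_k^+$-set need not be a $B_k^+$-set in a smaller "$k$", so the padding argument is genuinely necessary and cannot be replaced by mere restriction.
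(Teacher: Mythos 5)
There is a genuine gap in your argument that sinks the whole approach. When you pad both sides of $a_1 + \dots + a_l = b_1 + \dots + b_l$ by the \emph{same} set $X = \{x_1,\dots,x_l\}$, the resulting $k$-term equation has $X$ appearing among the summands on \emph{both} sides. The $B_k^+$ conclusion is merely that \emph{some} term on the left equals \emph{some} term on the right, and since $x_1$ occurs on the left and $x_1$ occurs on the right, this conclusion is automatically satisfied by $x_1 = x_1$. It gives no information whatsoever about the $a_i$'s or $b_j$'s. Your stated subtlety --- that ``the common term could be one of the padding elements $x_t$; that happens only if some $x_t$ equals some $b_j$ (or $a_i$)'' --- is where the misconception lies: the common term can be $x_t$ matched with itself across the two sides, independently of whether $X$ meets $\{a_i\} \cup \{b_j\}$. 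Deleting $X$ from $A$ to form $A'$ ensures $X$ is disjoint from the $a_i, b_j$, but it does nothing to prevent this trivial self-collision of the padding, so the argument proves nothing. The same issue defeats the odd case.

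The paper's proof proceeds by a different mechanism that avoids this pitfall entirely: one does not pad with arbitrary fixed elements, but with a \emph{violating} $l$-term (or $(l{+}1)$-term) relation. Concretely, if $A$ is not already a $B_l^+$-set, there exist $a_1,\dots,a_{2l} \in A$ with $a_1 + \dots + a_l = a_{l+1} + \dots + a_{2l}$ and $\{a_1,\dots,a_l\} \cap \{a_{l+1},\dots,a_{2l}\} = \emptyset$. Set $A' = A \setminus \{a_1,\dots,a_{2l}\}$. If $A'$ still had a violating relation $b_1 + \dots + b_l = b_{l+1} + \dots + b_{2l}$ with disjoint sides, then \emph{adding the two equations} produces a $2l = k$-term equation in $A$ whose left and right sides are disjoint (the $a$'s are pairwise split by hypothesis, the $b$'s are pairwise split by hypothesis, and all $b$'s avoid all $a$'s because $b_i \in A'$). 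This contradicts the $B_k^+$ property. The crucial difference from your scheme is that the left-side padding $\{a_1,\dots,a_l\}$ and the right-side padding $\{a_{l+1},\dots,a_{2l}\}$ are \emph{different}, disjoint sets that nonetheless have equal sums --- precisely because they come from a violating relation. That is what prevents the trivial collision you failed to rule out. The odd case $k = 2l+1$ is analogous, combining an $l$-term violation with an $(l{+}1)$-term violation.
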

\begin{proof}
Suppose $k = 2l$ with $l \geq 2$.  If $A$ is not a $B_l^+$-set then there is a set of $2l$, not necessarily distinct elements
$a_1 , \dots , a_{2l} \in A$, such that 
\begin{equation*}
a_1 + \dots + a_l = a_{l+1} + \dots + a_{2l}
\end{equation*}
and $\{a_1, \dots , a_l \} \cap \{ a_{l+1} , \dots , a_{2l} \} = \emptyset$.  Let $A' = A \backslash \{a_1, a_2, \dots , a_{2l} \}$.  
If $A'$ is not a $B_l^+$-set then there is another set of $2l$ elements of $A'$, say 
$b_1, \dots , b_{2l}$, such that 
\begin{equation*}
b_1 + \dots + b_l = b_{l+1} + \dots + b_{2l}
\end{equation*}
and $\{ b_1, \dots , b_l \} \cap \{ b_{l+1} , \dots , b_{2l} \} = \emptyset$.  Adding these two equations together gives 
\begin{equation*}
a_1 + \dots +a_l + b_1 + \dots + b_l  = a_{l+1} + \dots + a_{2l} + b_{l+1} + \dots + b_{2l}
\end{equation*}
with $\{a_1, \dots , a_l , b_1 , \dots , b_l \} \cap \{ a_{l+1} , \dots , a_{2l} , b_{l+1} , \dots , b_{2l} \} = \emptyset$, a contradiction. 

The case when $k = 2l+1 \geq 5$ can be handled in a similar way.

\end{proof}

\vspace{1em}

It is easy to modify the proof of Lemma~\ref{1.1 lemma 1} to obtain a version for $B_k^*$-sets. 
\begin{lemma}\label{1.1 lemma 2}
Let $A$ be a $B_k^*$-set with $k \geq 4$.  If $k = 2l$ then there is a subset $A' \subset A$ such that 
$A'$ is a $B_l^*$-set and $|A'| \geq |A| - 2l$.  If $k = 2l+1$ then there is a subset $A' \subset A$ such that 
$|A'| \geq |A| - 2k$ and $A'$ is either a $B_l^*$-set or a $B_{l+1}^*$-set.
\end{lemma}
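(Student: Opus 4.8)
The plan is to run the argument of Lemma~\ref{1.1 lemma 1} almost verbatim, the only change being that a ``failure'' of the $B_l^*$-property (or $B_{l+1}^*$-property) is an honest equation in $2l$ (resp.\ $2l+2$) \emph{distinct} elements, rather than two $l$-multisets with disjoint supports. So at each step I would carry along the distinctness of the elements involved, and then merge two such failures by addition exactly as in the $B_k^+$ case; the merged equation will be a solution that the $B_k^*$-property forbids.

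First I would treat the even case $k=2l$ with $l\ge 2$. If $A$ is already a $B_l^*$-set, take $A'=A$. Otherwise there are $2l$ distinct elements $a_1,\dots,a_{2l}\in A$ with $a_1+\dots+a_l=a_{l+1}+\dots+a_{2l}$; set $A'=A\backslash\{a_1,\dots,a_{2l}\}$, so $|A'|\ge|A|-2l$. I would then argue that $A'$ must be a $B_l^*$-set: if not, choose $2l$ distinct $b_1,\dots,b_{2l}\in A'$ with $b_1+\dots+b_l=b_{l+1}+\dots+b_{2l}$; since the $b_i$ lie in $A'$ they avoid $\{a_1,\dots,a_{2l}\}$, so all $4l=2k$ elements $a_1,\dots,a_{2l},b_1,\dots,b_{2l}$ are distinct, and adding the two equations gives
\begin{equation*}
a_1+\dots+a_l+b_1+\dots+b_l=a_{l+1}+\dots+a_{2l}+b_{l+1}+\dots+b_{2l},
\end{equation*}
a solution to $x_1+\dots+x_k=y_1+\dots+y_k$ in $2k$ distinct elements of $A$, contradicting that $A$ is a $B_k^*$-set.

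Next I would treat the odd case $k=2l+1\ge 5$. If $A$ is a $B_{l+1}^*$-set, take $A'=A$. Otherwise there are $2l+2$ distinct elements $a_1,\dots,a_{2l+2}\in A$ with $a_1+\dots+a_{l+1}=a_{l+2}+\dots+a_{2l+2}$; set $A'=A\backslash\{a_1,\dots,a_{2l+2}\}$, and since $2l+2\le 4l+2=2k$ we get $|A'|\ge|A|-2k$. I would then show $A'$ is a $B_l^*$-set: a $B_l^*$-failure $b_1+\dots+b_l=b_{l+1}+\dots+b_{2l}$ in distinct elements of $A'$, added to the first equation, yields a solution to $x_1+\dots+x_k=y_1+\dots+y_k$ with $k=(l+1)+l$ summands on each side in $(2l+2)+2l=2k$ distinct elements of $A$, again contradicting the $B_k^*$-property. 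Hence in every case $A'$ is a $B_l^*$-set or a $B_{l+1}^*$-set of the stated size.

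I do not expect a genuine obstacle here; the entire content is the additive merging already used for $B_k^+$-sets. The only points needing care are verifying that the elements of the second failure are distinct from those of the first (which holds because that failure lives in $A'$) and the trivial bound $2l+2\le 2k$ in the odd case. If one wishes to mirror the $B_k^+$ proof even more closely, one can instead delete a $B_l^*$-failure first and then, if necessary, a $B_{l+1}^*$-failure; the merged equation, and hence the contradiction, is the same, and the size loss stays at most $2k$.
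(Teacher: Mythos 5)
Your proof is correct and is exactly the modification the paper has in mind: the paper simply remarks that it is easy to adapt the proof of Lemma~\ref{1.1 lemma 1} to the $B_k^*$ setting, and your write-up carries out that adaptation, correctly replacing disjointness of the two sides by full distinctness of the $2l$ (resp.\ $2l+2$) elements and then merging two failures into a forbidden $B_k^*$-solution. The minor reordering in the odd case (removing a $B_{l+1}^*$-failure first rather than a $B_l^*$-failure) does not change the argument, as you note, and in either order the loss is at most $2k$ elements.
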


For $A \subset [N]$ and $j \geq 2$, let 
\begin{equation*}
\sigma_j (n) = \# \left\{  (a_1, \dots , a_j ) \in A^j : a_1 + \dots + a_j = n \right\}.
\end{equation*}
Let $e(x) = e^{2 \pi i x}$ and $f(t) = \displaystyle\sum_{a \in A} e(at)$.  
For any $j \geq 1$, $f(t)^j = \sum \sigma_j (n) e(nt)$ so by Parseval's Identity, $\sum \sigma_j (n)^2 = \int_0^1 |f(t)|^{2j} dt$.  The next lemma is (5.9) of \cite{R}.  

\begin{lemma}\label{1.1 lemma 3}
If $A \subset [N]$ is a $B_k^*$-set then 
\begin{equation}\label{1.1 eq 1}
\sum \sigma_k (n)^2 \leq (1 + o(1) ) k^2 |A| \sum \sigma_{k - 1} (n)^2.
\end{equation}
\end{lemma}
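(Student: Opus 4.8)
The plan is to read $\sum_n \sigma_k(n)^2$ as the number of ordered solutions $(a_1,\dots,a_k,b_1,\dots,b_k)\in A^{2k}$ of $a_1+\dots+a_k=b_1+\dots+b_k$, and to separate these into two groups. Call a solution \emph{non-degenerate} if $a_1,\dots,a_k$ are pairwise distinct and $b_1,\dots,b_k$ are pairwise distinct, and \emph{degenerate} otherwise. Since $A$ is a $B_k^*$-set, a non-degenerate solution cannot have all of $a_1,\dots,a_k,b_1,\dots,b_k$ distinct, so for every non-degenerate solution there are indices $i,j$ with $a_i=b_j$.

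First I would estimate the non-degenerate solutions. Each such solution has $a_i=b_j$ for at least one pair $(i,j)\in\{1,\dots,k\}^2$; fixing one such pair, record $(i,j)$, the common value $c=a_i=b_j\in A$, and the two $(k-1)$-tuples $(a_{i'})_{i'\neq i}$ and $(b_{j'})_{j'\neq j}$, which satisfy $\sum_{i'\neq i}a_{i'}=\sum_{j'\neq j}b_{j'}$. Distinct solutions sharing the same $(i,j)$ produce distinct such records, and for fixed $(i,j)$ and $c$ the number of pairs of balanced $(k-1)$-tuples in $A^{k-1}$ is $\sum_m\sigma_{k-1}(m)^2$. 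Hence the number of non-degenerate solutions is at most $\sum_{(i,j)}|A|\sum_m\sigma_{k-1}(m)^2=k^2|A|\sum_m\sigma_{k-1}(m)^2$.

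Next I would bound the degenerate solutions using Ruzsa's estimate recalled in the introduction: for a $B_k^*$-set $A\subset[N]$, the number of solutions of $2x_1+x_2+\dots+x_{k-1}=y_1+\dots+y_k$ with all variables in $A$ is $o(|A|^k)$. A degenerate solution has $a_p=a_q$ for some $p\neq q$, or $b_p=b_q$ for some $p\neq q$; in the first case, taking the common value as $x_1$, the remaining $a$'s as $x_2,\dots,x_{k-1}$, and the $b$'s as $y_1,\dots,y_k$ exhibits it as a solution of the displayed equation, and the second case is symmetric. Summing over the at most $2\binom{k}{2}$ choices of the repeated pair (overcounting is harmless), the number of degenerate solutions is $o(|A|^k)$.

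Combining the two bounds gives $\sum_n\sigma_k(n)^2\leq k^2|A|\sum_m\sigma_{k-1}(m)^2+o(|A|^k)$. To finish, note that $\sigma_{k-1}(m)$ is a non-negative integer, so $\sigma_{k-1}(m)^2\geq\sigma_{k-1}(m)$ and hence $\sum_m\sigma_{k-1}(m)^2\geq\sum_m\sigma_{k-1}(m)=|A|^{k-1}$; therefore $o(|A|^k)=o\!\left(|A|\sum_m\sigma_{k-1}(m)^2\right)$ and this error is absorbed into the factor $1+o(1)$, which yields (\ref{1.1 eq 1}). I expect the main obstacle to be making the reduction of the degenerate solutions match the exact shape of Ruzsa's equation and keeping the relabelling bookkeeping clean, so that the non-degenerate count contributes only the constant $k^2$ together with a harmless overcount; the remaining steps are routine counting.
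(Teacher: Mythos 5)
The paper cites this as (5.9) of Ruzsa \cite{R} without reproducing the proof; your argument is correct and is a faithful reconstruction of the underlying combinatorial strategy. The split into non-degenerate solutions (where the $B_k^*$ property forces a coincidence $a_i=b_j$, and cancelling it injects each solution into one of $k^2|A|$ families counted by $\sum_m\sigma_{k-1}(m)^2$) and degenerate ones (injected into solutions of $2x_1+x_2+\cdots+x_{k-1}=y_1+\cdots+y_k$ and hence $o(|A|^k)$, which is then absorbed into the $1+o(1)$ via the elementary bound $\sum_m\sigma_{k-1}(m)^2\geq\sum_m\sigma_{k-1}(m)=|A|^{k-1}$) is exactly what is needed, and all the injectivity and bookkeeping checks you gesture at do go through.
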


In \cite{R}, Ruzsa estimates the right hand side of (\ref{1.1 eq 1}) using H\"{o}lder's Inequality and shows
\begin{equation*}
\sum \sigma_{k-1}(n)^2 \leq \left( \sum \sigma_k (n) \right)^{ \frac{k-2}{k-1} } |A|^{  \frac{1}{k-1} }.
\end{equation*}
Our next lemma uses H\"{o}lder's Inequality in a different way.  

\begin{lemma}\label{1.1 lemma 4}
Let $A \subset [N]$ be a $B_k^*$-set.  If $k \geq 4$ is even then 
\begin{equation*}
\sum \sigma_k (n)^2 \leq (1 + o(1)) k^k |A|^{k/2} \sum \sigma_{k/2}(n)^2.
\end{equation*}
If $k = 2l+1 \geq 5$ then 
\begin{equation*}
\sum \sigma_k (n)^2 \leq (1 + o(1)) \max \left\{ k^{k+1} |A|^{ l+1 } \sum \sigma_l (n)^2 , k^{k-1} |A|^{ l } 
\sum \sigma_{l+1} (n)^2 \right\}.
\end{equation*}
\end{lemma}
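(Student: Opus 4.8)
The plan is to apply Lemma~\ref{1.1 lemma 3} exactly once and then close the resulting self-referential inequality by interpolation. Crucially, Lemma~\ref{1.1 lemma 3} is used only for the set $A$ itself, which is a $B_k^*$-set, so we never need $A$ to be a $B_j^*$-set for $j<k$; after the single application we are left with $\sum\sigma_{k-1}(n)^2$, and this intermediate $\ell^2$-sum is then squeezed between $\sum\sigma_{\lfloor k/2\rfloor}(n)^2$ (or $\sum\sigma_{\lceil k/2\rceil}(n)^2$) and $\sum\sigma_k(n)^2$ using H\"older's inequality on $[0,1]$. Throughout write $f=f_A$, so that $\sum_n\sigma_j(n)^2=\int_0^1|f(t)|^{2j}\,dt$; Lemma~\ref{1.1 lemma 2} is \emph{not} needed for this lemma.

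Suppose first that $k=2l$ with $l\ge 2$. Lemma~\ref{1.1 lemma 3} gives $\sum\sigma_k(n)^2\le(1+o(1))k^2|A|\sum\sigma_{k-1}(n)^2$. Writing $|f|^{4l-2}=|f|^2\cdot(|f|^4)^{l-1}$ and applying H\"older's inequality with the conjugate exponents $l$ and $l/(l-1)$ yields
\[
\sum\sigma_{k-1}(n)^2=\int_0^1|f|^{4l-2}\,dt\;\le\;\left(\sum\sigma_{k/2}(n)^2\right)^{1/l}\left(\sum\sigma_k(n)^2\right)^{(l-1)/l}.
\]
Writing $X=\sum\sigma_k(n)^2$ and $Y=\sum\sigma_{k/2}(n)^2$, substitution gives $X\le(1+o(1))k^2|A|\,Y^{1/l}X^{(l-1)/l}$. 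Since $X\ge 1$ we may divide by $X^{(l-1)/l}$ and raise to the $l$-th power, obtaining $X\le(1+o(1))k^{2l}|A|^l Y=(1+o(1))k^k|A|^{k/2}\sum\sigma_{k/2}(n)^2$; the error term is preserved because $l$ is a fixed constant, so $(1+o(1))^l=1+o(1)$.

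Now suppose $k=2l+1$ with $l\ge 2$. Again Lemma~\ref{1.1 lemma 3} gives $\sum\sigma_k(n)^2\le(1+o(1))k^2|A|\int_0^1|f|^{4l}\,dt$, and we bound $\int_0^1|f|^{4l}\,dt$ in two ways. From $|f|^{4l}=(|f|^{2l})^{1/(l+1)}(|f|^{4l+2})^{l/(l+1)}$ and H\"older with exponents $l+1,\ (l+1)/l$ we get $\int_0^1|f|^{4l}\,dt\le\left(\sum\sigma_l(n)^2\right)^{1/(l+1)}\left(\sum\sigma_k(n)^2\right)^{l/(l+1)}$; substituting, isolating $\sum\sigma_k(n)^2$, and raising to the $(l+1)$-st power gives $\sum\sigma_k(n)^2\le(1+o(1))k^{2(l+1)}|A|^{l+1}\sum\sigma_l(n)^2$. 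From $|f|^{4l}=(|f|^{2l+2})^{1/l}(|f|^{4l+2})^{(l-1)/l}$ and H\"older with exponents $l,\ l/(l-1)$ we get $\int_0^1|f|^{4l}\,dt\le\left(\sum\sigma_{l+1}(n)^2\right)^{1/l}\left(\sum\sigma_k(n)^2\right)^{(l-1)/l}$, and the same manipulation gives $\sum\sigma_k(n)^2\le(1+o(1))k^{2l}|A|^{l}\sum\sigma_{l+1}(n)^2$. Both bounds hold simultaneously, hence $\sum\sigma_k(n)^2$ is at most the larger of the two right-hand sides, which is exactly the claimed bound since $2(l+1)=k+1$ and $2l=k-1$.

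What remains is routine: a one-line exponent computation confirms each pointwise factorization of $|f|^{2(k-1)}$; the H\"older exponents used are conjugate; the hypotheses $k\ge 4$ even and $k=2l+1\ge 5$ give $l\ge 2$, so every exponent above is positive and all the H\"older steps are legitimate; and the single use of Lemma~\ref{1.1 lemma 3} keeps the error at $1+o(1)$ after raising to a fixed power. I do not expect a genuine obstacle here --- the entire argument is a short bootstrap, the only insight being that the intermediate power $2(k-1)$ lies between $2\lfloor k/2\rfloor$ (resp.\ $2\lceil k/2\rceil$) and $2k$, so one application of Lemma~\ref{1.1 lemma 3} combined with interpolation already suffices. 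Lemma~\ref{1.1 lemma 2} enters only afterwards, when the terms $\sum\sigma_{\lfloor k/2\rfloor}(n)^2$ and $\sum\sigma_{\lceil k/2\rceil}(n)^2$ are in turn estimated by passing to large $B_{\lfloor k/2\rfloor}^*$- and $B_{\lceil k/2\rceil}^*$-subsets.
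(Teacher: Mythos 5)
Your proof is correct and uses exactly the paper's method: a single application of Lemma~\ref{1.1 lemma 3} followed by a H\"older interpolation of $\sum\sigma_{k-1}(n)^2$ between $\sum\sigma_{\lfloor k/2\rfloor}(n)^2$ (or $\sum\sigma_{\lceil k/2\rceil}(n)^2$) and $\sum\sigma_k(n)^2$, then solving the resulting self-referential inequality, with your H\"older exponents $l$, $l/(l-1)$ (resp.\ $l+1$, $(l+1)/l$) being the same conjugate pairs as the paper's $k/(k-2)$, $k/2$ (resp.\ $(k+1)/(k-1)$, $(k+1)/2$) up to swapping the roles of the two factors. Your observation that Lemma~\ref{1.1 lemma 2} is not needed here --- both H\"older bounds hold unconditionally and the $\max$ in the statement absorbs the paper's case split --- is a correct small streamlining: the paper invokes Lemma~\ref{1.1 lemma 2} inside this proof only to decide which single H\"older estimate to write down, but that decision (and the passage to a $B_l^*$- or $B_{l+1}^*$-subset) is really needed later, in Lemma~\ref{1.1 lemma 5}, where $\sum\sigma_{\lfloor k/2\rfloor}(n)^2$ is actually estimated via $c^*_{\lfloor k/2\rfloor}$.
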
  
\begin{proof}
First assume that $k = 2l \geq 4$.  By Lemma~\ref{1.1 lemma 2} we may assume that $A$ is a $B_l^*$-set otherwise 
we pass to a subset of $A$ that is a $B_l^*$ set and has at least $|A| - 2k$ elements.  
Applying H\"{o}lder's Inequality with $p = \frac{k}{k-2}$ and $q = \frac{k}{2}$, 
\begin{eqnarray*}
\sum \sigma_{k-1}(n)^2 & = & \int_0^1 |f(t)|^{2(k-1)} dt = \int_0^1 |f(t)|^{ \frac{2k}{p} } |f(t)|^{ \frac{2l}{q} } dt \\ 
& \leq &  \left( \int_0^1 |f(t)|^{2k} dt \right)^{1/p}  \left( \int_0^1 |f(t)|^{2l} dt \right)^{1/q} \\
& = & \left( \sum \sigma_k (n)^2 \right)^{ (k-2)/k } \left( \sum \sigma_l (n)^2 \right)^{ 2/k }.
\end{eqnarray*}
Substituting this estimate into (\ref{1.1 eq 1}) and solving for $\sum \sigma_k (n)^2$ gives the first part of the lemma.   

Now assume $k = 2l +1 \geq 5$.  Again by Lemma~\ref{1.1 lemma 2} we can assume that $A$ is either a $B_l^*$-set or a $B_{l+1}^*$-set.  

Suppose $A$ is a $B_l^*$-set.  Applying H\"{o}lder's Inequality with 
$p = \frac{k+1}{k-1}$ and $q = \frac{k+1}{2}$ we get 
\begin{equation*}
\sum \sigma_{k-1} (n)^2 \leq \left( \sum \sigma_k (n)^2 \right)^{ \frac{k-1}{k+1} } \left( \sum \sigma_l (n)^2 \right)^{ \frac{2}{k+1} }.
\end{equation*}
This inequality and (\ref{1.1 eq 1}) imply 
\begin{equation*}
\sum \sigma_k (n)^2 \leq (1 + o(1)) k^{k+1} |A|^{ \frac{k+1}{2} } \sum \sigma_l (n)^2.
\end{equation*}
If $A$ is a $B_{l+1}^*$-set instead then apply H\"{o}lder's Inequality with $p = \frac{l}{l-1}$ and 
$q = \frac{1}{l}$ and proceed as above.  It is in this step that we must assume $k =2l+1 \geq 5$ otherwise if $k =3$, then $l=1$ and $p$ is not defined.   
\end{proof}

\vspace{1em}

For $k \geq 2$ let $c_k^+$ be the smallest constant such that for any $B_k^+$-set $A$, 
\begin{equation*}
\sum \sigma_k (n)^2 \leq (1 + o(1)) c_k^+ |A|^k.
\end{equation*}
Define $c_k^*$ similarly.  The techniques of \cite{R} can be used to show that $c_k^* \leq k^{2k}$ so 
$c_k^+$ and $c_k^*$ are well defined.  Observe that for any $k \geq 2$, $c_k^+ \leq c_k^*$.  Using Lemma~\ref{1.1 lemma 4}, it is not difficult to show that for even $k \geq 4$, 
\begin{equation}\label{1.2 even}
c_k^+ \leq k^k c_{k/2}^+ ~ \mbox{and} ~ c_k^* \leq k^k c_{k/2}^*,
\end{equation}
and for odd $k = 2l+1 \geq 5$, 
\begin{equation}\label{1.2 odd}
c_k^+ \leq \max \left\{ k^{k+1} c_l^+ , k^{k-1} c_{l+1}^+ \right\} ~ \mbox{and} ~ c_k^* \leq \max \left\{ k^{k+1} c_l^* , k^{k-1} c_{l+1}^* \right\}.
\end{equation}

\begin{lemma}\label{1.1 lemma 5}
Let $A \subset [N]$ be a $B_k^+$-set.  If $k \geq 4$ is even then 
\begin{equation}\label{even star rec}
|A| \leq (1 + o(1)) \left( k^{k + 1} c_{k/2}^+ N \right)^{1/k}.
\end{equation}
If $k = 2l  +1 \geq 5$ then 
\begin{equation}\label{odd star rec}
|A| \leq (1 + o(1)) \left( k^k \cdot \max \{k^2 c_l^+ , c_{l+1}^+ \} N \right)^{1/k}.
\end{equation}
The same inequalities hold under the assumption that $A \subset [N]$ is a $B_k^*$-set provided the $c_k^+$'s are 
replaced with $c_k^*$'s.
\end{lemma}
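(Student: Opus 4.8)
The plan is to compare a Cauchy--Schwarz lower bound for $\sum_n \sigma_k(n)^2$ with the upper bound $\sum_n \sigma_k(n)^2 \le (1+o(1))\,c_k^+\,|A|^k$ that is built into the definition of $c_k^+$, and then to insert the recursive estimates (\ref{1.2 even}) and (\ref{1.2 odd}) already established for $c_k^+$.

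First I would record the lower bound. Since $A \subset [N]$, every one of the $|A|^k$ tuples $(a_1,\dots,a_k) \in A^k$ has sum in $\{k,k+1,\dots,kN\}$, so $\sigma_k(n) \ne 0$ for fewer than $kN$ values of $n$, while $\sum_n \sigma_k(n) = |A|^k$. Cauchy--Schwarz then gives $|A|^{2k} = \bigl(\sum_n \sigma_k(n)\bigr)^2 \le kN \sum_n \sigma_k(n)^2$, that is, $\sum_n \sigma_k(n)^2 \ge |A|^{2k}/(kN)$. Combining this with $\sum_n \sigma_k(n)^2 \le (1+o(1))\,c_k^+\,|A|^k$ yields $|A|^k \le (1+o(1))\,k\,c_k^+\,N$.

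Now I substitute the recursions. For even $k \ge 4$, (\ref{1.2 even}) gives $c_k^+ \le k^k c_{k/2}^+$, so $|A|^k \le (1+o(1))\,k^{k+1}c_{k/2}^+ N$, which is (\ref{even star rec}). For $k = 2l+1 \ge 5$, (\ref{1.2 odd}) gives $c_k^+ \le \max\{k^{k+1}c_l^+,\,k^{k-1}c_{l+1}^+\}$, and pulling out the factor $k^k$ turns $|A|^k \le (1+o(1))\,k\max\{k^{k+1}c_l^+,\,k^{k-1}c_{l+1}^+\}N$ into $|A|^k \le (1+o(1))\,k^k\max\{k^2 c_l^+,\,c_{l+1}^+\}N$, which is (\ref{odd star rec}). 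Equivalently, one can sidestep the recursions: by Lemma~\ref{1.1 lemma 1} pass to a subset $A' \subset A$ with $|A'| \ge |A| - O_k(1)$ that is a $B_{k/2}^+$-set (in the even case) or a $B_l^+$- or $B_{l+1}^+$-set (in the odd case), apply Lemma~\ref{1.1 lemma 4} to $A'$, bound the resulting $\sum_n \sigma_{k/2}(n)^2$ (resp. $\sum_n \sigma_l(n)^2$ or $\sum_n \sigma_{l+1}(n)^2$) by the definition of $c_{k/2}^+$ (resp. $c_l^+$ or $c_{l+1}^+$), and feed this into the lower bound for $A'$. The $B_k^*$ statement is word-for-word identical, using the $B_k^*$ halves of (\ref{1.2 even}) and (\ref{1.2 odd}) together with $\sum_n \sigma_k(n)^2 \le (1+o(1))\,c_k^*\,|A|^k$.

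I do not expect a genuine obstacle here: the substantive work has already been done in Lemma~\ref{1.1 lemma 4} and in the derivation of (\ref{1.2 even}) and (\ref{1.2 odd}). The one point to watch is the $o(1)$ bookkeeping --- one should check that the several $(1+o(1))$ factors (functions of $N$, silently absorbing the additive $O_k(1)$ losses incurred when passing to $B_l^+$- or $B_{l+1}^+$-subsets in Lemmas~\ref{1.1 lemma 1} and~\ref{1.1 lemma 4}) compose into a single $(1+o(1))$, and that using the crude count $kN$ in place of $kN-k+1$ for the support of $\sigma_k$ is harmless, affecting only lower-order terms.
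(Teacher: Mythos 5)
Your proposal is correct and follows essentially the same route as the paper: a Cauchy--Schwarz lower bound $|A|^{2k}/(kN) \le \sum \sigma_k(n)^2$ combined with the H\"older recursion for $\sigma_k$. The paper plugs Lemma~\ref{1.1 lemma 4} and the definition of $c_{k/2}^+$ (resp.\ $c_l^+,c_{l+1}^+$) in directly rather than passing through the already-packaged inequalities (\ref{1.2 even}) and (\ref{1.2 odd}), but since those are themselves immediate consequences of Lemma~\ref{1.1 lemma 4}, the two presentations are equivalent; your constant bookkeeping $k\cdot\max\{k^{k+1}c_l^+,k^{k-1}c_{l+1}^+\}=k^k\max\{k^2c_l^+,c_{l+1}^+\}$ checks out.
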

\begin{proof}
By Cauchy-Schwarz,
\begin{equation}\label{1.1 CS}
\frac{ |A|^{2k} }{kN} \leq \sum \sigma_k (n)^2
\end{equation}
for any $k \geq 2$.  

First suppose $k \geq 4$ is even.  By (\ref{1.1 CS}) and Lemma~\ref{1.1 lemma 4},
\begin{equation*}
\frac{ |A|^{2k} }{kN} \leq \sum \sigma_k (n)^2  \leq   (1 + o(1)) k^k |A|^{k/2} \sum \sigma_{k/2}(n)^2 
 \leq   (1 + o(1)) k^k c_{k/2}^+ |A|^k.
\end{equation*}
Solving this inequality for $|A|$ proves (\ref{even star rec}).  

Now suppose $k = 2l+1 \geq 5$.  By (\ref{1.1 CS}) and Lemma~\ref{1.1 lemma 4},  
\begin{eqnarray*}
\frac{ |A|^{2k} }{kN} & \leq & \sum \sigma_k (n)^2 \leq (1 + o(1)) \max \left\{  k^{k+1} c_l^+ |A|^k , k^{k-1} c_{l+1}^+ |A|^k \right\} \\
& = & (1 + o(1)) |A|^k k^{k-1} \max \{ k^2 c_l^+ , c_{l+1}^+ \}.
\end{eqnarray*}
\end{proof}

\vspace{1em}

Lemma~\ref{1.1 lemma 5} shows that we can obtain upper bounds on $B_k^+$-sets and $B_k^*$-sets recursively.  To start the recursion we need estimates on $c_2^+$, $c_2^*$, $c_3^+$, and $c_3^*$.  

\begin{lemma}\label{1.1 lemma 6}
If $A$ is a $B_2^*$-set then 
\begin{equation*}
\sum \sigma_2 (n)^2 \leq 2 |A|^2 + 32 |A|
\end{equation*}
and therefore $c_2^* \leq 2$.  
\end{lemma}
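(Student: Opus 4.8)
The plan is to read $\sum \sigma_2(n)^2$ as a count of additive quadruples and then use the $B_2^*$ property to show all but $O(|A|)$ of them are the obvious (trivial) ones.

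First I would record that $\sum_n \sigma_2(n)^2$ equals the number of ordered quadruples $(a_1,a_2,a_3,a_4)\in A^4$ with $a_1+a_2=a_3+a_4$. Call such a quadruple \emph{trivial} if $(a_3,a_4)$ is a permutation of $(a_1,a_2)$. A direct count gives exactly $2|A|^2-|A|$ trivial quadruples: each of the $|A|^2-|A|$ ordered pairs $(a_1,a_2)$ with $a_1\neq a_2$ contributes the two quadruples with $(a_3,a_4)\in\{(a_1,a_2),(a_2,a_1)\}$, while each of the $|A|$ pairs with $a_1=a_2$ contributes one.

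Next I would analyze the nontrivial quadruples using the defining property of a $B_2^*$-set. If $a_1,a_2,a_3,a_4$ were all distinct, the relation $a_1+a_2=a_3+a_4$ would be a forbidden solution in four distinct integers, so some value must repeat. If the repetition is ``across'', i.e.\ some element of $\{a_1,a_2\}$ equals some element of $\{a_3,a_4\}$, cancelling it forces $(a_3,a_4)$ to be a permutation of $(a_1,a_2)$, so the quadruple is trivial. Hence in a nontrivial quadruple either $a_1=a_2$ or $a_3=a_4$, and — since we are in $\integers$, so $2x=2y$ implies $x=y$ — not both. In the case $a_1=a_2=:q$ we get $2q=a_3+a_4$ with $q,a_3,a_4$ pairwise distinct, i.e.\ a genuine $3$-term arithmetic progression with midpoint $q$; the case $a_3=a_4$ is symmetric. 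Conversely each $3$-element progression $\{p,q,r\}$ with $p+r=2q$ yields exactly the four nontrivial quadruples $(q,q,p,r),(q,q,r,p),(p,r,q,q),(r,p,q,q)$. Thus the number of nontrivial quadruples is $4P$, where $P$ denotes the number of $3$-element subsets of $A$ in arithmetic progression.

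The last step is to bound $P$, and this is where the $B_2^*$ hypothesis does the real work: if some $q\in A$ were the midpoint of two distinct progressions $p+r=2q=p'+r'$ with $\{p,r\}\neq\{p',r'\}$, then any common element would force $\{p,r\}=\{p',r'\}$, so $\{p,r\}\cap\{p',r'\}=\emptyset$ and $p,r,p',r'$ are four distinct elements of $A$ with $p+r=p'+r'$ — a contradiction. Hence each element of $A$ is the midpoint of at most one progression, so $P\leq|A|$. Combining everything, $\sum\sigma_2(n)^2 = 2|A|^2-|A|+4P \leq 2|A|^2+3|A| \leq 2|A|^2+32|A|$, and since the right-hand side is $(1+o(1))\,2|A|^2$ we obtain $c_2^*\leq 2$. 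The only mildly delicate point is the case analysis for nontrivial quadruples — in particular, noticing that an ``across'' coincidence collapses to triviality so that the only surviving configurations are three-term progressions — but once that is seen the rest is routine.
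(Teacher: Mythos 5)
Your proof is correct, and it takes a genuinely different — and more self‑contained — route than the paper's. The paper rewrites $\sum\sigma_2(n)^2$ as $\sum\delta(n)^2$ for the difference function $\delta(n)=\#\{(a,b)\in A^2:a-b=n\}$ and then imports, as a black box from Ruzsa \cite{R}, the structural facts that $\delta(n)\le 2$ for $n\neq0$ and that $\delta(n)=2$ for at most $8|A|$ values of $n$; summing $\delta(0)^2+4\cdot 8|A|+|A-A|$ gives $2|A|^2+32|A|$. You instead count additive quadruples directly: split into trivial and nontrivial, observe that any across‑coincidence collapses a quadruple to a trivial one, show the surviving nontrivial quadruples are in 4‑to‑1 correspondence with three‑term progressions in $A$, and then bound the number of such progressions by $|A|$ using the $B_2^*$ property applied to two progressions sharing a midpoint. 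This is elementary, avoids invoking Ruzsa's lemma, and in fact yields the tighter inequality $\sum\sigma_2(n)^2\le 2|A|^2+3|A|$; the paper's $32|A|$ error term is inherited from the weaker blanket bound on how often $\delta(n)=2$. One small thing to tighten in your write‑up: the ``and not both'' step should lean on the fact that $a_1=a_2$ together with $a_3=a_4$ gives $2a_1=2a_3$, hence $a_1=a_3$, which is an across‑coincidence already ruled out for nontrivial quadruples — phrasing it this way makes explicit that the $\integers$‑cancellation is being fed back into the earlier case split rather than being a separate observation.
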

\begin{proof}
Let $\delta (n) = \# \{ (a_1, a_2) \in A^2 : a_1 - a_2 = n \}$ and observe $\sum \sigma_2 (n)^2 = \sum \delta(n)^2$.
In \cite{R} it is shown that $\delta(n) \leq 2$ for any $n \neq 0$ and $\delta (n) =2$ for at most $8|A|$ integers $n$.  We conclude 
\begin{equation*}
\sum \delta (n)^2 \leq \delta (0)^2 + 8 |A| \cdot 4 + |A - A| \leq 2 |A|^2 + 32 |A|.
\end{equation*}
\end{proof}

\begin{lemma}\label{1.1 lemma 7}
If $A \subset [N]$ is a $B_3^+$-set then 
\begin{equation*}
\sum \sigma_3 (n)^2 \leq (1 + o(1)) 18 |A|^3
\end{equation*}
therefore $c_3^+ \leq 18$.
\end{lemma}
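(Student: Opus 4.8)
The plan is to read $\sum \sigma_3(n)^2$ as a count of additive sextuples, strip off one coincidence guaranteed by the $B_3^+$ property, and feed the remaining two-variable count into the estimates already established in the proof of Theorem~\ref{ub 3}. Write $E_3 := \sum \sigma_3(n)^2$; this is exactly the number of ordered tuples $(a_1,a_2,a_3,b_1,b_2,b_3) \in A^6$ with $a_1+a_2+a_3 = b_1+b_2+b_3$. Split $E_3 = P + Q$, where $P$ counts the tuples for which $(b_1,b_2,b_3)$ is a permutation of $(a_1,a_2,a_3)$ and $Q$ counts the rest. Counting by the underlying size-$3$ multiset, $P = 36\binom{|A|}{3} + 9|A|(|A|-1) + |A| = 6|A|^3 + O(|A|^2)$, so it suffices to prove $Q \le 9|A|^3 + O(|A|^2)$: then $E_3 \le 15|A|^3 + O(|A|^2) \le (1+o(1))18|A|^3$, which gives $c_3^+ \le 18$ (in fact $c_3^+ \le 15$).

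To bound $Q$, take any tuple counted by $Q$. By the $B_3^+$ property some $a_i$ equals some $b_j$; fix the lexicographically least such pair $(i,j)$ and set $v := a_i = b_j$. Deleting $a_i$ from the left side and $b_j$ from the right side leaves an identity $p + q = r + s$, where $(p,q)$ are the two surviving $a$'s listed in increasing index order and $(r,s)$ the two surviving $b$'s listed in increasing index order. Here $\{p,q\} \neq \{r,s\}$ as multisets, for otherwise $\{a_1,a_2,a_3\}$ and $\{b_1,b_2,b_3\}$ would agree as multisets and the tuple would lie in $P$. The data $(i,j,v,(p,q,r,s))$ reconstructs the original tuple, so the assignment is injective and $Q \le 9|A| \cdot R$, where $R$ denotes the number of ordered quadruples $(p,q,r,s) \in A^4$ with $p+q = r+s$ and $\{p,q\} \neq \{r,s\}$ as multisets.

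It remains to bound $R$. If $p,q,r,s$ are not all distinct, then $p+q=r+s$ together with $\{p,q\}\neq\{r,s\}$ forces a non-trivial $3$-term arithmetic progression in $A$ (for instance $p=q$ with $p,r,s$ distinct and $r+s = 2p$), and by Lemma~\ref{ap} (whose bound only improves in $\integers$) there are $O(|A|)$ such quadruples. If $p,q,r,s$ are all distinct, then $\{p,q\}$ and $\{r,s\}$ are disjoint $2$-element subsets of $A$ with common sum $d := p+q$, so both lie in $S(d)$, hence in $S_i^2$ for the corresponding index $i$, and they are distinct elements of $S_i^2$; conversely each ordered pair of distinct (hence, by the observation after the definition of $S_i^2$, disjoint) elements of $S_i^2$ gives exactly $4$ such quadruples. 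Thus $R = 4\sum_{i=1}^{M} |S_i^2|(|S_i^2|-1) + O(|A|)$. By Lemma~\ref{new new lemma}, the inequality $g_1(c) \le f(c)$, and Lemma~\ref{step 2 v2},
\[
\sum_{i=1}^{M} |S_i^2|(|S_i^2|-1) = \frac{1}{2}\sum_{c \in A} g_1(c) \le \frac{1}{2}\sum_{c \in A} f(c) \le \frac{|A|^2}{4} + \frac{3|A|}{2},
\]
so $R \le |A|^2 + O(|A|)$, whence $Q \le 9|A|^3 + O(|A|^2)$ as required.

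The step I expect to be the main obstacle is the reduction in the middle paragraph: one has to check carefully both that deleting a single coincidence from a non-permutation sextuple really leaves a non-degenerate $2+2$ identity and that the all-distinct solutions of such an identity are organized precisely by the sets $S_i^2$ introduced for Theorem~\ref{ub 3}. Once this is in place, the only quantitative inputs are Lemma~\ref{new new lemma} and Lemma~\ref{step 2 v2}, and the remaining steps --- the evaluation of $P$, the arithmetic-progression estimate, and the final bookkeeping --- are routine.
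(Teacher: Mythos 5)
Your proof is correct, and it takes a genuinely different route from the paper. The paper establishes $c_3^+ \leq 18$ by invoking Ruzsa's Fourier-analytic inequality $\sum \sigma_3(n)^2 \leq (1+o(1))\,9|A|\sum\sigma_2(n)^2$ (Lemma~\ref{1.1 lemma 3}) and then bounding $\sum\sigma_2(n)^2$ by relating it to the representation function $r_2(n)$ and the sets $S_i^2$, using the number of $3$-term a.p.'s and the chain $\sum_i |S_i^2|^2 \leq 2\sum_i |S_i^2|(|S_i^2|-1) \leq \sum_c f(c)$. You instead decode $\sum\sigma_3(n)^2$ directly as a count of additive sextuples, isolate the $6|A|^3+O(|A|^2)$ permutation-type solutions exactly, and use the $B_3^+$ property to peel one coincidence off each non-permutation sextuple via an injective map into triples $(i,j,v)$ together with a $2{+}2$ identity $p+q=r+s$ with $\{p,q\}\neq\{r,s\}$. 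The all-distinct $2{+}2$ identities are precisely the ordered pairs of distinct members of the $S_i^2$'s (up to the $4$ internal orderings), and the degenerate ones are $O(|A|)$ by Lemma~\ref{ap}, so $R = 4\sum_i |S_i^2|(|S_i^2|-1) + O(|A|)$; combined with Lemma~\ref{new new lemma}, $g_1 \leq f$, and Lemma~\ref{step 2 v2}, this gives $R \leq |A|^2 + O(|A|)$ and hence $\sum\sigma_3(n)^2 \leq 15|A|^3 + O(|A|^2)$. The quantitative inputs are the same as the paper's (the $S_i^2$ machinery and Lemma~\ref{step 2 v2}), but your argument is purely combinatorial, avoids the Fourier step entirely, and actually yields the stronger bound $c_3^+ \leq 15$. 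The bookkeeping (the exact count of $P$, the injectivity of the decoding map once the lexicographically least coincidence is fixed, and the reduction of the degenerate quadruples to $3$-term a.p.'s) all checks out.
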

\begin{proof}
Let $A \subset [N]$ be a $B_3^+$-set and let 
\begin{equation*}
r_2 (n) = \# \left\{ \{a,b \} \in A^{(2)} : a + b = n \right\}.
\end{equation*}
Define $2 \cdot A := \{ 2a : a \in A \}$.  For $n \in 2 \cdot A$, $\sigma_2 (n) = 2 r_2 (n) +1$ and 
$\sigma_2 (n) = 2 r_2 (n)$ otherwise.  The sum $\sum_{ n \in 2 \cdot A} r_2(n)$  counts the number of 3-term a.p.'s in 
$A$ so by Lemma~\ref{ap},
\begin{eqnarray*}
\sum \sigma_2 (n)^2 & = & 4 \sum r_2 (n)^2 + 4 \sum_{n \in 2 \cdot A} r_2(n) + |2 \cdot A| \\
& \leq & 4 \sum r_2 (n)^2 + 4 \cdot 3 |A| + |A| = 4 \sum r_2 (n)^2 + 13 |A|.
\end{eqnarray*}
Using the notation and results of Section 3 and the inequality $x^2 \leq 2x(x-1)$ where $x \geq 2$, 
\begin{equation*}
\sum r_2 (n)^2  =  \sum_{i=1}^{M} |S_i^2|^2 \leq 2 \sum_{i=1}^{M} |S_i^2| ( |S_i^2| -1) 
 \leq  \sum_{c \in A} f(c) \leq \frac{ |A|^2}{2} + 3|A|.
\end{equation*}
Using (\ref{1.1 eq 1})
\begin{eqnarray*}
\sum \sigma_3 (n)^2 & \leq & (1 + o(1)) 3^2 |A| \sum \sigma_2 (n)^2 \leq (1 + o(1))9 |A| ( 4 \sum r_2 (n)^2 + 13 |A| ) \\
& \leq & (1 + o(1)) 9 |A| ( 2 |A|^2 + 25 |A| ) \leq (1 + o(1))(18 |A|^3 + 225 |A|^2).
\end{eqnarray*} 
\end{proof}

\vspace{1em}

\begin{lemma}\label{1.1 lemma 7.5}
If $A \subset [N]$ is a $B_3^*$-set then 
\begin{equation*}
\sum \sigma_3 (n)^2 \leq (1 + o(1)) 54 |A|^3
\end{equation*}
therefore $c_3^* \leq 54$.
\end{lemma}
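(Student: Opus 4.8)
The plan is to imitate the proof of Lemma~\ref{1.1 lemma 7}. The only ingredient of that proof that genuinely uses the $B_3^+$ hypothesis — apart from the structural material of Section~3, which will have to be redone — is Ruzsa's inequality~(\ref{1.1 eq 1}), and that is already stated for $B_k^*$-sets. So by Lemma~\ref{1.1 lemma 3} we have $\sum\sigma_3(n)^2\le(1+o(1))\,9|A|\sum\sigma_2(n)^2$, and it suffices to prove that a $B_3^*$-set $A\subset[N]$ satisfies
\begin{equation*}
\sum\sigma_2(n)^2 \le (1+o(1))\,6|A|^2 .
\end{equation*}
With $r_2(n)=\#\{\{a,b\}\in A^{(2)}:a+b=n\}$ and $\sigma_2(n)=2r_2(n)+\ind[n\in 2\cdot A]$, the computation in Lemma~\ref{1.1 lemma 7} together with $\sum_n r_2(n)^2=\binom{|A|}{2}+\sum_i |S_i^2|(|S_i^2|-1)$ gives
\begin{equation*}
\sum\sigma_2(n)^2 = 4\sum r_2(n)^2 + 4\sum_{n\in 2\cdot A}r_2(n) + |A|, \qquad
\sum r_2(n)^2 = \binom{|A|}{2}+\sum_{i=1}^M|S_i^2|(|S_i^2|-1).
\end{equation*}
Moreover the arithmetic-progression count is dominated by the last sum: the values $2r$, $r\in A$, are distinct, at most $|A|$ of them have $r_2(2r)=1$, and $r_2(2r)\le r_2(2r)(r_2(2r)-1)$ when $r_2(2r)\ge 2$, so $\sum_{n\in 2\cdot A}r_2(n)\le|A|+\sum_i|S_i^2|(|S_i^2|-1)$. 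Hence the whole statement reduces to the single estimate
\begin{equation*}
\sum_{i=1}^M|S_i^2|(|S_i^2|-1)\le (1+o(1))\,\tfrac12|A|^2,
\end{equation*}
to be proved using only the $B_3^*$ property.

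To prove that estimate I would rework the structural lemmas of Section~3 (Lemmas~\ref{disjoint}, \ref{step 2}, \ref{new new lemma}) under the $B_3^*$ hypothesis. The replacement for the $B_3^+$ dichotomy is a ``partner-intersection'' lemma: if $\{a,b\}$ and $\{a,e\}$ with $b\ne e$ both lie in some $S_i^2$, and $\{p,q\}$, $\{p',q'\}$ are disjoint pairs with $p+q=a+b$ and $p'+q'=a+e$, then subtracting gives $b+p'+q'=e+p+q$; unless one of a short list of forced coincidences holds (essentially $e\in\{p,q\}$ or $b\in\{p',q'\}$), the six entries are distinct, which the $B_3^*$ property forbids, so $\{p,q\}$ and $\{p',q'\}$ must meet. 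Consequently, for a fixed $a$ the partner pairs of all the $S_i^2$-pairs containing $a$ form an intersecting family of $2$-sets, hence are either a star (all containing a common element $w=w(a)$) or a triangle of size $3$. The triangle case contributes only $O(|A|)$. In the star case every $S_i^2$-pair $\{a,b\}$ through $a$ has its partner forced to be $\{w,\,a+b-w\}$, so the admissible $b$ lie in $A\cap(A+(w-a))$ and each such $S_i^2$ has $|S_i^2|=2$; summing $|A\cap(A+c)|$ over the resulting shifts $c$, together with $\sum_c|A\cap(A+c)|=|A|^2$, controls this contribution. Collecting the cases gives the required bound on $\sum_i|S_i^2|(|S_i^2|-1)$, hence $\sum\sigma_2(n)^2\le(1+o(1))\,6|A|^2$ and $\sum\sigma_3(n)^2\le(1+o(1))\,54|A|^3$, i.e.\ $c_3^*\le 54$.

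I expect the delicate step to be precisely the star case, and more generally squeezing the leading constant of $\sum_i|S_i^2|(|S_i^2|-1)$ down to $\tfrac12$ rather than settling for $O(|A|^2)$: the $B_3^*$ property, in contrast to the $B_3^+$ property exploited in Section~3, gives no information about an isolated $3$-term progression or an isolated Sidon-violating quadruple, only about the way two such configurations overlap, and it is exactly how much one can wring out of those overlaps that decides whether the final constant is $54$ or something larger. Keeping careful track of the $O(|A|)$ error terms, so that the $(1+o(1))$ survives, is the other, more routine, source of work; the overall skeleton is identical to that of Lemma~\ref{1.1 lemma 7}.
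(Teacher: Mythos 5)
Your reduction chain is correct up to a point: the identity $\sum\sigma_2(n)^2 = 4\sum r_2(n)^2 + 4\sum_{n\in 2\cdot A}r_2(n) + |A|$ is right, and so is $\sum r_2(n)^2 = \binom{|A|}{2}+\sum_i|S_i^2|(|S_i^2|-1)$ (note that any two distinct pairs with the same sum are automatically disjoint). The paper uses essentially this same skeleton. However, there are two genuine problems with your argument.

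First, your choice to bound the progression count by $\sum_{n\in 2\cdot A}r_2(n)\le|A|+\sum_i|S_i^2|(|S_i^2|-1)$ is far too lossy and forces you to a target that is likely out of reach. Writing $T=\sum_i|S_i^2|(|S_i^2|-1)$, your bound yields $\sum\sigma_2(n)^2\le 2|A|^2+8T+O(|A|)$, so to reach $6|A|^2$ you are obliged to show $T\le\tfrac12|A|^2(1+o(1))$, equivalently $\sum r_2(n)^2\le|A|^2(1+o(1))$. The paper instead bounds the progression count by Cauchy--Schwarz, $\sum_{n\in 2\cdot A}r_2(n)\le|A|^{1/2}\bigl(\sum r_2(n)^2\bigr)^{1/2}=O(|A|^{3/2})$, which is lower order, so the target becomes only $\sum r_2(n)^2\le\tfrac32|A|^2(1+o(1))$, i.e.\ $T\le|A|^2(1+o(1))$. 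And that is exactly what the paper proves — no more. Its constrained optimization over the densities $\delta_1,\delta_2,\delta_3$ has maximum value $\tfrac32$ for $\sum r_2(n)^2/|A|^2$, attained at $\delta_3=1/6$, and nothing in the argument pushes this below $\tfrac32$. Your target $T\le\tfrac12|A|^2$ is a strict strengthening of the paper's bound; if you could prove it, you would in fact improve the constant $54$ to $36$, so you should be suspicious that your reduction has quietly promised too much. The fix is simply to use Cauchy--Schwarz for the progression count, after which the needed estimate becomes the one the paper actually establishes.

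Second, the structural mechanism you propose is different from the paper's and, as sketched, is not adequate to reach even the weaker correct target. The paper's key lemma (its Lemma~\ref{1.1 lemma 8}) is about two index classes $P_i^2,P_j^2$ whose supports $Q_i^1,Q_j^1$ meet: if $p_i,p_j\ge3$ then $p_i+p_j\le 7$, which lets the paper stratify the $d_i$ by $p_i\in\{1,2,3,4,\ge5\}$, bound the $p_i\ge4$ mass by disjointness of supports, and run a constrained optimization in $\delta_1,\delta_2,\delta_3$. Your ``partner-intersection'' idea is genuinely different — you fix $a\in A$ and argue that partner pairs of the $S_i^2$-pairs through $a$ form an intersecting family — but the exceptions you enumerate ($e\in\{p,q\}$ or $b\in\{p',q'\}$) really do occur and do not vanish; moreover partners are not unique, so you are dealing with a family of families, and the ``star or triangle'' dichotomy does not follow cleanly. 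There is no indication that the star-case bookkeeping (summing $|A\cap(A+c)|$ over forced shifts) lands at the right leading constant, and the proposal provides no mechanism to rule out the contribution from $P_i^2$'s of size $3$, which is exactly what saturates the paper's bound. In short: switch to Cauchy--Schwarz for the progression count, and replace the intersecting-family heuristic with a rigorous control on $\sum|P_i^2|^2$ — a version of the paper's $p_i+p_j\le7$ lemma and the ensuing density optimization — because that is where the constant $54$ actually comes from.
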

\begin{proof}
Let $A\subset [N]$ be a $B_3^*$-set.  The idea of the proof is motivated by the same arguments that were used for $B_3^+$-sets.  
 For $d \in A + A$, let 
\begin{equation*}
P^2 (d) = \{ \{a,b \} \in A^{(2)} : a + b = d \}.
\end{equation*}
Define $m_0 = 0$ and for $1 \leq j \leq 4$, let $d_{m_{j-1} +1 } , d_{m_{j-1} +2 } , \dots , d_{m_j}$ be the integers 
for which $|P^2 (d_i)| = j$.  Let $d_{m_4 +1} , d_{m_4 +2}, \dots , d_M$ be the integers for which 
$|P^2 (d_i)| \geq 5$.  As before, write $P_i^2$ for $P^2 (d_i)$, $p_i$ for $|P_i^2|$, and for $1 \leq i \leq M$ let 
\begin{equation*}
Q_i^1 = \{ a : a \in \{a,b\} ~ \mbox{for some} ~ \{a,b \} \in P_i^2 \}.
\end{equation*}
We will use the notation $P_i^2 = \{ \{a_1^i , b_1^i \}, \dots , \{a_{p_i}^i , b_{p_i}^i \} \}$.  A difference between the $P_i^2$'s of this section and the $S_i^2$'s of earlier sections is that we allow for a $P_i^2$ to be one pair.  

\begin{lemma}\label{1.1 lemma 8}
If $x \in Q_i^1 \cap Q_j^1$ for some $i \neq j$ and $p_i \geq 3$ and $p_j \geq 3$ then $p_i+p_j \leq 7$.  
\end{lemma}
\begin{proof}
Without loss of generality, assume $x = a_1^i$ and $x=a_1^j$ where 
\begin{equation*}
P_i^2 = \{ \{a_1^i , b_1^i \},  \{a_2^i , b_2^i \}, \dots , \{ a_{p_i}^i , b_{p_i}^i \} \} ~~ \textrm{and} ~~ 
P_j^2 = \{ \{a_1^j , b_1^j \}, \{a_2^j , b_2^j \}, \dots , \{ a_{p_j}^j , b_{p_j}^j \} \}.
\end{equation*}
For $2 \leq l \leq p_i$ we have $d_i = x + b_1^i = a_l^i + b_l^i$ and similarly for $2 \leq k \leq p_j$ we have $d_j = x + b_1^j = a_k^j + b_k^j$.
Then $a_l^i + b_l^i - b_1^i = x = a_k^j + b_k^j - b_1^j$ so  
\begin{equation}\label{star 1}
a_l^i + b_l^i + b_1^j = a_k^j + b_k^j + b_1^i ~ \textrm{for any} ~ 2 \leq l \leq p_i ~ \textrm{and} ~ 2 \leq k \leq p_j.
\end{equation}
If $b_1^j \in T_i^1$ then there is no loss in assuming $b_1^j \in \{ a_2^i , b_2^i \}$.  The same assumption may be made with $i$ and $j$ interchanged.  This means that for $l \geq 3$, $b_1^j$ is not a term in the sum 
$a_l^i + b_l^i$ and for $k \geq 3$, $b_1^i$ is not a term in the sum $a_k^j + b_k^j$.  The $B_3^*$ property and (\ref{star 1}) imply 
\begin{equation}\label{star 2}
| \{ a_l^i , b_l^i \} \cap \{ a_k^j , b_k^j \} | =1 ~ \textrm{for any} ~ 3 \leq l \leq p_i ~ \textrm{and} ~ 3 \leq k \leq p_j.
\end{equation}
In particular, $\{a_3^i , b_3^i \} \cap \{ a_3^j , b_3^j \} \neq \emptyset$ 
and $\{a_3^i , b_3^i \} \cap \{a_4^j , b_4^j \} \neq \emptyset$ so that $p_j \leq 4$.  Here we are using the fact that 
any element of $A$ can occur at most once in the list $a_1^i, b_1^i,  \dots , a_{p_i}^i, b_{p_i}^i$.  By symmetry, $p_i \leq 4$.  

If $p_i = p_j =4$ then by (\ref{star 2}), $\{ a_3^i , b_3^i , a_4^i , b_4^i \} = \{ a_3^j , b_3^j , a_4^j , b_4^j \}$
but then $2d_i = a_3^i  + b_3^i + a_4^i + b_4^i = 2d_j$ implying $d_i = d_j$, a contradiction.     
\end{proof}

\begin{corollary}\label{star cor}
If $p_i \geq 4$ and $p_j \geq 4$ with $i \neq j$ then $Q_i^1 \cap Q_j^1 = \emptyset$.
\end{corollary}

\vspace{1em}

With our notation, we can write 
\begin{equation*}
\sum r_2 (n)^2 = \sum_{i=1}^{M} |P_i^2|^2 = m_1 + 4(m_2 - m_1) + 9(m_3 - m_2) + 16 (m_4 - m_3) + \sum_{i=m_4 +1}^{M} 
|P_i^2|^2.
\end{equation*}
If $p_i = p_j = 4$ for some $i \neq j$ then $Q_i^1 \cap Q_j^1 = \emptyset$ by Corollary~\ref{star cor} so 
$m_4 - m_3 \leq \frac{ |A|}{8}$.  For $1 \leq i \leq 3$, let $\delta_i |A|^2 = m_i - m_{i-1}$.  Then 
\begin{equation}\label{1.1 eq 2}
\sum r_2 (n)^2 \leq |A|^2 ( \delta_1 + 4 \delta_2 + 9 \delta_3) + \sum_{i=m_4+1}^{M} |P_i^2|^2 + 2|A|.
\end{equation}

Define a graph $H$ with vertex set $Q_{m_2+1}^1 \cup \dots \cup Q_{m_3}^1$ and edge set 
$P_{m_2 +1}^2 \cup \dots \cup P_{m_3}^2$.  Let $n = |V(H)|$.  $H$ has $3(m_3 - m_2) = 3 \delta_3 |A|^2$ edges so 
$3 \delta_3 |A|^2 \leq \frac{n}{2}$ which can be rewritten as 
\begin{equation}\label{1.1 eq 3}
\sqrt{6 \delta_3} |A| \leq | Q_{m_2+1}^1 \cup \dots \cup Q_{m_3}^1 |.
\end{equation}
For any $i$ and $j$ with $m_2 +1 \leq i \leq m_3$ and $m_4 +1 \leq j \leq M$, $Q_i^1 \cap Q_j^1 = \emptyset$ by Lemma~\ref{1.1 lemma 8} so that (\ref{1.1 eq 3}) implies 
\begin{equation*}
\sum_{i=m_4 +1}^{M} |P_i^2| = \frac{1}{2} \sum_{i=m_4 +1}^{M} |Q_i^1| = \frac{1}{2} 
|Q_{m_4 +1}^1 \cup \dots \cup Q_M^1 | \leq \frac{1}{2} (1 - \sqrt{6 \delta_3 } )|A|.
\end{equation*}
We conclude $\sum_{i=m_4 +1}^{M} | P_i^2 |^2 \leq \left( \frac{1 - \sqrt{6 \delta_3} }{2} \right)^2 |A|^2$.  This estimate and (\ref{1.1 eq 2}) give 
\begin{equation}\label{1.1 eq 4}
\sum r_2 (n)^2 \leq   |A|^2 \left( \delta_1 + 4 \delta_2 + 9 \delta_3 + \frac{1}{4}( 1 - \sqrt{6 \delta_3} )^2 \right)  + 2|A|.
\end{equation}

Each pair $\{a,b\} \in A^{ (2)}$ is in at most one $P_i^2$ so 
\begin{equation*}
|A|^2 ( \delta_1 + 2 \delta_2 + 3 \delta_3 ) = m_1 + 2(m_2 - m_1) + 3 (m_3 - m_2) \leq { |A| \choose 2} \leq \frac{ |A|^2 }{2}.
\end{equation*}
The maximum of $\delta_1 + 4 \delta_2 + 9 \delta_3 + \frac{1}{4} ( 1 - \sqrt{ 6 \delta_3 } )^2$ subject to the conditions 
$\delta_1 + 2 \delta_2 + 3 \delta_3 \leq \frac{1}{2}$, $\delta_1 \geq 0$, $\delta_2 \geq 0$, and $\delta_3 \geq 0$ is $\frac{3}{2}$ obtained when 
$\delta_1 = \delta_2 = 0$ and $\delta_3 = \frac{1}{6}$.  By (\ref{1.1 eq 4}), 
\begin{equation}\label{1.1 eq 4.5}
\sum r_2 (n)^2 \leq \frac{3 |A|^2 }{2} + 2 |A|.
\end{equation}
An immediate consequence is that 
\begin{equation}\label{1.1 eq 5}
\sum_{n \in 2 \cdot A} r_2 (n) = \sum \ind_{2 \cdot A}(n) r_2 (n) \leq |A|^{1/2} \left( \sum r_2 (n)^2 \right)^{1/2} \leq 2 |A|^{3/2}.
\end{equation}
Next we proceed as in Lemma~\ref{1.1 lemma 7}.  Using (\ref{1.1 eq 5}) and (\ref{1.1 eq 4.5}), 
\begin{eqnarray*}
\sum \sigma_2 (n)^2 & = & 4 \sum r_2 (n)^2 + 4 \sum_{n \in 2 \cdot A} r_2 (n) + | 2 \cdot A| \\
& \leq & 6 |A|^2 + 8 |A|^{3/2} + 9 |A|.
\end{eqnarray*}
To finish the proof, recall
\begin{equation*}
\sum \sigma_3 (n)^2 \leq (1 + o(1)) 3^2 |A| \sum \sigma_2 (n)^2
\end{equation*}
and now we use our upper bound on $\sum \sigma_2 (n)^2$ to get $c_3^* \leq 54$.  
\end{proof}  

\begin{corollary}
If $A \subset [N]$ is a $B_3^*$-set then 
\begin{equation*}
|A| \leq (1 + o(1))( 162 N)^{1/3}.
\end{equation*}
\end{corollary}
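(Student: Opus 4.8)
The plan is to feed the bound $c_3^* \le 54$ just established in Lemma~\ref{1.1 lemma 7.5} into the elementary Cauchy--Schwarz lower bound (\ref{1.1 CS}), which, as noted in the proof of Lemma~\ref{1.1 lemma 5}, is valid for every $k \ge 2$. Taking $k = 3$ in (\ref{1.1 CS}) gives
\[
\frac{|A|^{6}}{3N} \le \sum \sigma_3 (n)^2 .
\]
By Lemma~\ref{1.1 lemma 7.5} the right-hand side is at most $(1 + o(1)) 54 |A|^3$. Combining the two estimates yields $|A|^3 \le (1 + o(1)) 162 N$, and extracting cube roots gives $|A| \le (1 + o(1))(162N)^{1/3}$, which is the assertion. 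One can also phrase this as the $k = 3$ instance of the recursion in Lemma~\ref{1.1 lemma 5}, with the constants $c_k^+$ replaced by $c_k^*$ and using $c_3^* \le 54$.

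There is essentially no obstacle left at this stage: the corollary is a one-line consequence of Lemma~\ref{1.1 lemma 7.5}, all the substantive work — the fiber decomposition into the $P_i^2$ and $Q_i^1$, the graph-theoretic double counting behind (\ref{1.1 eq 3}) and Corollary~\ref{star cor}, and the linear-programming-style optimization over $\delta_1,\delta_2,\delta_3$ that produced the constant $\frac{3}{2}$ in (\ref{1.1 eq 4.5}) and hence $54$ — having already been carried out in the proof of that lemma. The only point worth a second glance is the bookkeeping of error terms: one should check that the $O(|A|^{3/2})$ term coming through (\ref{1.1 eq 5}) and the $O(|A|^2)$ term in the bound on $\sum \sigma_2 (n)^2$, after being multiplied by $9|A|$ via (\ref{1.1 eq 1}), are genuinely $o(|A|^3)$. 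Since any $B_3^*$-set in $[N]$ already satisfies $|A| = O(N^{1/3})$ by Ruzsa's theorem, we have $|A| \to \infty$ as $N \to \infty$, so these corrections are absorbed into the $(1 + o(1))$ factor, completing the argument.
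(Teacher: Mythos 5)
Your argument is exactly the paper's: the corollary follows by plugging the bound $c_3^* \le 54$ from Lemma~\ref{1.1 lemma 7.5} into the Cauchy--Schwarz inequality $\frac{|A|^6}{3N} \le \sum \sigma_3(n)^2$ and rearranging. The extra remarks on error-term bookkeeping are correct but not needed beyond what the $(1+o(1))$ notation already absorbs.
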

\begin{proof}
If $A \subset [N]$ is a $B_3^*$-set then 
\begin{equation*}
\frac{ |A|^6 }{3N } \leq \sum \sigma_3 (n)^2 \leq (1 + o(1)) 54 |A|^3.
\end{equation*}
\end{proof}

\vspace{1em}

Thus far we have shown $c_2^+ \leq c_2^* \leq 2$, $c_3^+ \leq 18$, and $c_3^* \leq 54$.  Now we describe our method for obtaining upper bounds on $F_k^+ (N)$ and $F_k^* (N)$.  Assume we have upper bounds on $c_2^+, c_3^+ , \dots , c_{k-1}^+$.  Lemma~\ref{1.1 lemma 5} gives an upper bound on $|A|$ in terms of $c_{k/2}^+$ when $k$ is even, and in terms of $c_l^+$ and $c_{l+1}^+$ when $k=2l+1 \geq 5$.  An upper bound on $c_k^+$ is obtained from (\ref{1.2 even}) and (\ref{1.2 odd}).  We can also apply this method to $B_k^*$-sets.  The bounds we obtain are given in Table 1 below.  They have been rounded up to the nearest tenth and they hold for large enough $N$ without error terms.         

\begin{center}

\begin{tabular}{|c||c||c|c|} \hline
$k$ 		& Upper Bound of \cite {R}	& Our Upper Bound on $F_k^*$   & Our Upper Bound on $F_k^+$   \\ \hline
3  		& $6.3N^{1/3}$ 			& $5.5N^{1/3}$		& $2.7N^{1/3}$ \\
4     		& $11.4  N^{1/4}$			& $6.8 N^{1/4}$   & $4.1 N^{1/4}$		\\  
5		& $18.2 N^{1/5}$  			& $11.2 N^{1/5}$	& $11N^{1/5}$	\\  
6   		& $26.8 N^{1/6}$            		& $15.8 N^{1/6}$	& $13.1N^{1/6}$	  \\      
7  		& $37.2  N^{1/7}$              	& $21.6 N^{1/7}$	& $18.5N^{1/7}$		  \\ 
8  		& $49.4  N^{1/8}$              	& $22.7 N^{1/8}$	& $22.7N^{1/8}$	  \\ \hline 
\end{tabular}

\vspace{2mm}

Table 1: Upper bounds on $B_k^+$-sets and $B_k^*$-sets.

\end{center} 

We conclude this section with our proof of the second statement of Theorem~\ref{upper bounds}.  Recall (\ref{1.2 even}) states   
$c_k^* \leq k^{k} c_{k/2}^*$ for any even $k \geq 4$, and (\ref{1.2 odd}) gives $c_k^* \leq k^{k+1} \max \{c_l^* , 
c_{l+1}^* \}$ for $ k= 2l+1 \geq 5$.  
For $x \geq 0$ let $\lceil x \rceil$ be the smallest integer greater than or equal to $x$ and let $\lfloor x \rfloor$ be the greatest integer less than or equal to $x$.  For $k \geq 0$, define $\phi_1 (k) = \lceil \frac{k}{2} \rceil$ and 
$\phi_i (k) := \phi_{1} ( \phi_{i-1} (k ) )$ for $i \geq 2$.  A simple induction argument can be used to show that for all $i \geq 1$, 
$\phi_i (k) \leq k2^{-i} + \sum_{t=0}^{i-1} 2^{-t}$.  The conclusion is that for every $i \geq 1$, 
$\phi_i (k) \leq k2^{-i} + 2$.  For any $k \geq 5$, 
\begin{equation*}
c_k^*  \leq  k^{k+1} \prod_{i=1}^{ \lfloor \log_2 k \rfloor} \phi_i (k)^{ \phi_i (k) +1 } 
 \leq   k^{k+1} \prod_{i=1}^{ \lfloor \log_2 k \rfloor } \left( k2^{-i} +2 \right)^{ k2^{-i} +3 }.
\end{equation*}   
Taking $k$-th roots, 
\begin{eqnarray*}
(c_k^*)^{1/k} & \leq & k^{1+1/k} \prod_{ i =1}^{ \lfloor \log_2 k \rfloor} (k2^{-i} + 2)^{2^{-i} + 3/k} 
\leq k^{1+1/k} \left( \frac{k}{2} + 2 \right)^{ \frac{ 3 \log_2 k }{k} } \prod_{i=1}^{ \lfloor \log_2 k \rfloor} (k2^{-i} +2)^{2^{-i} } \\
& \leq & k^{1 + 1/k} k^{ \frac{ 3 \log_2 k }{k} } k^{ \sum_{i=1}^{ \lfloor \log_2 k \rfloor } 2^{-i} } 
\prod_{i=1}^{ \lfloor \log_2 k \rfloor } \left( 2^{-i} + \frac{2}{k} \right)^{2^{-i} } \\
& \leq & k^2 k^{ \frac{4 \log_2 k }{k} } \prod_{i=1}^{ \lfloor \log_2 k \rfloor } \left(2^{-i} + \frac{2}{k} \right)^{2^{-i}}.
\end{eqnarray*}

We claim the sequence $(c_{k}^*)^{1/k}$ is bounded above by a function $F(k)$ that tends to $\frac{k^2}{4}$ as $k \rightarrow \infty$.  With this in mind, we rewrite the previous inequality as 
\begin{equation}\label{1.1 eq 10}
\frac{ 4 (c_k^*)^{1/k} }{k^2} \leq 4 k^{ \frac{4 \log_2 k }{k} } \prod_{i=1}^{ \lfloor \log_2 k \rfloor} \left( 2^{-i} + \frac{2}{k} 
\right)^{2^{-i} }.
\end{equation} 
It is easy to check $k^{ \frac{ 4 \log_2 k }{k} } \rightarrow 1$ as $k \rightarrow \infty$.  
Using $\sum_{n=0}^{\infty} nx^{n-1} = \frac{1}{(1-x)^2}$ from elementary calculus,  
\begin{equation*}
\prod_{i=1}^{ \lfloor \log_2 k \rfloor} (2^{-i})^{2^{-i}} = \left( \frac{1}{2} \right)^{ \sum_{i=1}^{\lfloor \log_2 k \rfloor} i2^{-i} } 
\rightarrow \frac{1}{4}
\end{equation*}
as $k \rightarrow \infty$.   
Using the inequality $1+x \leq e^x$ for $x \geq 0$ we have 
\begin{eqnarray*}
1 & \leq &  \frac{    \prod_{i=1}^{ \lfloor \log_2 k \rfloor} (2^{-i} + 2/k)^{2^{-i} } }{  \prod_{i=1}^{ \lfloor \log_2 k \rfloor} 
(2^{-i} )^{2^{-i} }} = \prod_{i=1}^{ \lfloor \log_2 k \rfloor} \left( 1 + \frac{2^{i+1}}{k} \right)^{2^{-i} } \\
& \leq & \prod_{i=1}^{ \lfloor \log_2 k \rfloor} e^{ 2^{i+1} /k} \leq e^{ \frac{1}{k} \sum_{i=2}^{ \lfloor \log_2 k \rfloor}2^i } \leq e^{1/k}.
\end{eqnarray*} 
As $k \rightarrow \infty$, $e^{1/k} \rightarrow 1$ so 
\begin{equation*}
\prod_{i=1}^{ \lfloor \log_2 k \rfloor} \left( 2^{-i} + \frac{2}{k} \right)^{2^{-i} } \rightarrow \frac{1}{4}.
\end{equation*}
This shows that the right hand side of (\ref{1.1 eq 10}) tends to 1 and $k \rightarrow \infty$ which proves the claim.  

Given $\epsilon >0$, we can choose $k$ large enough so that $k^{1/k} (c_k^*)^{1/k} \leq (1 + \epsilon ) \frac{k^2}{4}$.  The theorem now follows from the definition of $c_k^*$ and the estimate $\frac{ |A|^{2k} }{kN} \leq \sum \sigma_k (n)^2$.


\section{Proof of Theorem~\ref{lb 4}}


\begin{lemma}\label{lemma 5.1}
If $A \subset G$ is a non-abelian $B_k$-set and $B \subset H$ is a non-abelian $B_k^+$-set then $A \times B$ is a 
non-abelian $B_k^+$-set in $G \times H$.
\end{lemma}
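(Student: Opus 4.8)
The plan is to exploit the coordinatewise multiplication on $G \times H$. First I would suppose that
\begin{equation*}
(a_1,b_1)(a_2,b_2) \cdots (a_k,b_k) = (a_1',b_1')(a_2',b_2') \cdots (a_k',b_k')
\end{equation*}
is an equality of $k$-letter words with every $(a_i,b_i)$ and every $(a_j',b_j')$ lying in $A \times B$. Since the group operation on $G \times H$ acts independently on the two factors, this single identity is equivalent to the pair of identities $a_1 a_2 \cdots a_k = a_1' a_2' \cdots a_k'$ in $G$ and $b_1 b_2 \cdots b_k = b_1' b_2' \cdots b_k'$ in $H$, where all $a_i, a_j' \in A$ and all $b_i, b_j' \in B$.

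Next I would feed these two projected identities into the two hypotheses. Because $A$ is a non-abelian $B_k$-set, the equation in $G$ forces $a_i = a_i'$ for every $i \in \{1, 2, \dots, k\}$. Because $B$ is only assumed to be a non-abelian $B_k^+$-set, the equation in $H$ gives less, namely that there is at least one index $i_0 \in \{1, 2, \dots, k\}$ with $b_{i_0} = b_{i_0}'$. The key observation is that the positional matching available in the $G$-coordinate holds for \emph{every} index, and in particular for the one index $i_0$ singled out by the $H$-coordinate.

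Finally I would combine the two conclusions at $i_0$: from $a_{i_0} = a_{i_0}'$ and $b_{i_0} = b_{i_0}'$ we obtain $(a_{i_0}, b_{i_0}) = (a_{i_0}', b_{i_0}')$, which is exactly the defining property (\ref{intro eq 2}) of a non-abelian $B_k^+$-set for $A \times B$. I expect no real obstacle here, as the argument is a direct unwinding of the definitions; the only point worth recording is that the asymmetry of the hypotheses is used essentially — if $A$ were also merely a $B_k^+$-set one could extract some matching index $i$ in the first coordinate and a possibly different matching index $j$ in the second, and without $i = j$ the product need not be a $B_k^+$-set.
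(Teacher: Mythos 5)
Your argument is correct and is essentially identical to the paper's proof: project the word equality onto the two coordinates, use the $B_k$ property of $A$ to match every index in the $G$-coordinate, use the $B_k^+$ property of $B$ to produce one matching index $i_0$ in the $H$-coordinate, and combine at $i_0$. The closing remark about why the asymmetry of hypotheses is essential is a nice addition but does not change the argument.
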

\begin{proof}
Suppose $a_1, \dots , a_k, a_{1}', \dots , a_{k}' \in A$, $b_1, \dots , b_k, b_{1}', \dots , b_{k}' \in B$ and 
\begin{equation*}
(a_1, b_1 ) \cdots (a_k,b_k) = (a_{1}', b_{1}') \cdots (a_{k}' , b_{k}').
\end{equation*}
Then $a_1 \cdots a_k = a_{1}' \cdots a_{k}'$ and $b_1 \cdots b_k = b_{1}' \cdots b_{k}'$ so $a_i = a_{i}'$ for every $i$ and 
$b_j = b_{j}'$ for some $j$ hence $(a_j , b_j ) = (a_{j}' , b_{j}' )$.  
\end{proof}

\vspace{1em}

Let $\mathbb{F}_4 = \{0 , 1, a, b \}$ be the finite field with four elements and
\begin{equation*}
H = \left\{ 
\left( \begin{array}{cc}
x & y \\
0 & x^{-1} \end{array} \right) :
x \in \mathbb{F}_{4}^{*}, y \in \mathbb{F}_4 \right\}.   
\end{equation*}
$H$ is a group under matrix multiplication and $|H| = 12$.  Let 
\begin{equation*}
\alpha = \left( \begin{array}{cc}
a & 1 \\
0 & b \end{array} \right) ~~ \textrm{and} ~~
\beta = \left( \begin{array}{cc}
a & a \\
0 & b \end{array} \right).
\end{equation*}

Simple computations show that $\alpha$ and $\beta$ satisfy $\alpha^3 = \beta^3 = \textrm{id}$ and 
$\alpha^2 \beta = \beta^2 \alpha$.  

\begin{lemma}\label{AB lemma}
The set $\{\alpha, \beta \}$ is a $B_4^+$-set in $H$.    
\end{lemma}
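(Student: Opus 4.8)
The plan is to exploit the explicit matrix model of $H$ together with the fact that $\{\alpha,\beta\}$ has only two elements, which collapses the $B_4^+$ verification to a single arithmetic identity in $\mathbb{F}_4$. Recall that a product of upper-triangular matrices $\left(\begin{smallmatrix} x_i & y_i \\ 0 & x_i^{-1}\end{smallmatrix}\right)$, $1 \le i \le n$, is again of this shape, with top-left entry $\prod_i x_i$ and top-right entry $\sum_{i=1}^{n} \bigl(\prod_{j<i}x_j\bigr)\,y_i\,\bigl(\prod_{j>i}x_j^{-1}\bigr)$ (the scalars commute since they lie in the field $\mathbb{F}_4$). Both $\alpha$ and $\beta$ have $x$-coordinate $a$, so any length-$4$ word $w = w_1w_2w_3w_4$ with letters in $\{\alpha,\beta\}$ has product with top-left entry $a^4 = a$ (as $a^3 = 1$ in $\mathbb{F}_4^{*}$); that entry cannot distinguish words, and everything is governed by the top-right entry, which I denote $Y(w)$. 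Since $\prod_{j<i}x_j = a^{i-1}$ and $\prod_{j>i}x_j^{-1} = a^{i-4}$, and $a^3 = 1$, the coefficients of $(y_1,y_2,y_3,y_4)$ are $(a^{-3},a^{-1},a^{1},a^{3}) = (1,a^2,a,1)$; hence $Y(w) = y_1 + a^2 y_2 + a y_3 + y_4$ in $\mathbb{F}_4$, where $y_i = 1$ if $w_i = \alpha$ and $y_i = a$ if $w_i = \beta$ (reading off the top-right entries of $\alpha$ and $\beta$).

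Next I would pass to the contrapositive. Suppose $w_1w_2w_3w_4 = v_1v_2v_3v_4$ with all letters in $\{\alpha,\beta\}$ and no index $i$ with $w_i = v_i$; since $\{\alpha,\beta\}$ has exactly two elements, this forces $v_i = \bar w_i$ for every $i$, where $\bar\alpha := \beta$ and $\bar\beta := \alpha$. So it suffices to show $Y(w) \ne Y(\bar w)$ for every word $w$. Recall that in $\mathbb{F}_4 = \{0,1,a,b\}$ we have $b = a^2 = a+1$, $a^3 = 1$, and $\mathrm{char}\,\mathbb{F}_4 = 2$. Flipping a letter changes its $y$-value by $b$: if $y_i = 1$ then $\bar y_i = a = 1 + b$, and if $y_i = a$ then $\bar y_i = 1 = a + b$, so in all cases $\bar y_i = y_i + b$.

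The last step is the identity. By linearity of $Y$ and the relation $\bar y_i = y_i + b$,
\[
Y(\bar w) + Y(w) \;=\; \sum_{i=1}^{4}(\bar y_i + y_i)\,c_i \;=\; b\,(c_1 + c_2 + c_3 + c_4), \qquad (c_1,c_2,c_3,c_4) = (1,a^2,a,1),
\]
and $c_1 + c_2 + c_3 + c_4 = 1 + a^2 + a + 1 = a^2 + a = 1 \ne 0$ in $\mathbb{F}_4$ (using $a^2 = a+1$). Therefore $Y(\bar w) + Y(w) = b \ne 0$, i.e.\ $Y(\bar w) \ne Y(w)$, so $w_1w_2w_3w_4$ and $\bar w_1\bar w_2\bar w_3\bar w_4$ have different top-right entries and cannot be equal---the desired contradiction, which proves the lemma.

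There is no serious obstacle, only bookkeeping; the one delicate point is the $\mathbb{F}_4$-arithmetic, specifically that the coefficient sum $1 + a^2 + a + 1$ equals the nonzero element $1$ rather than $0$. That is precisely the spot where the argument could have failed, and it is what makes the length $k = 4$ work for this particular pair; a quick consistency check is $\alpha^4 = \alpha \ne \beta = \beta^4$, which is the word $\alpha\alpha\alpha\alpha$ against its complement. (One could instead enumerate all $2^4$ words of length $4$ and compare the $8$ complementary pairs directly, but the displayed identity is shorter and explains why the statement holds.)
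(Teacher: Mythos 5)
Your proof is correct but takes a genuinely different route from the paper's. The paper works abstractly with the group presentation: it records the relations $\alpha^3 = \beta^3 = \mathrm{id}$ and $\alpha^2\beta = \beta^2\alpha$, fixes $x_1=\alpha,\ y_1=\beta$ by symmetry, and then kills each of the eight equations $\alpha x_2x_3x_4 = \beta\bar x_2\bar x_3\bar x_4$ by a short case analysis (grouped into five cases), always arriving at the contradiction $\alpha=\beta$. You instead stay inside the explicit matrix model: since both generators share the diagonal $(a,a^{-1})$, the diagonal of any length-$4$ product is $(a^4,a^{-4})$ regardless of the word, so the only entry that can distinguish a word $w=w_1w_2w_3w_4$ from its letterwise complement $\bar w$ is the upper-right entry, which you express as a linear form $Y(w)=\sum_i c_i y_i$ over $\mathbb{F}_4$ with coefficients $(c_1,c_2,c_3,c_4)=(1,a^2,a,1)$; flipping any letter adds $b$ to its $y$-coordinate, so $Y(\bar w)-Y(w)=b\sum_i c_i = b\neq 0$, disposing of all eight complementary pairs at once. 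The paper's route uses only group-theoretic bookkeeping and would transfer verbatim to any pair satisfying the same two relations; your route is slightly more computational but more explanatory, isolating the single $\mathbb{F}_4$-arithmetic fact $1+a^2+a+1=1\neq 0$ that makes the lemma true and making visible why $k=4$ works while $k=3$ cannot (for $k=3$ the analogous coefficient sum is $a+1+a^2=0$, matching the failure $\alpha^3=\beta^3$).
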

\begin{proof}
Suppose there is a solution to the equation $x_1 x_2 x_3 x_4 = y_1 y_2 y_3 y_4$ with 
$x_i \neq y_i$ for $1 \leq i \leq 4$ and $x_i,y_j \in \{ \alpha , \beta \}$ for all $i,j$.  Without loss of generality, assume $x_1 = \alpha$ and $y_1 = \beta$.  There are eight cases which we can deal with using the relations $\alpha^3 = \beta^3=\textrm{id}$ and $\alpha^2 \beta = \beta^2 \alpha$.  Instead of considering each individually, we handle several cases at the same time.  

\noindent
\textbf{Case 1:}  $\alpha^4 = \beta^4$ or $\alpha^3 \beta = \beta^3 \alpha$ or $\alpha \beta^3 = \beta \alpha^3$.

If any of these equations hold then the relation $\alpha^3 = \beta^3 = \textrm{id}$ implies $\alpha = \beta$, a contradiction.

\noindent
\textbf{Case 2:}  $\alpha^2 \beta \alpha = \beta^2 \alpha \beta$ or $\alpha^2 \beta^2 = \beta^2 \alpha^2$.     

If either of these equations hold then the relation $\alpha^2 \beta = \beta^2 \alpha$ implies $\alpha = \beta$.   

\noindent
\textbf{Case 3:}  $\alpha \beta \alpha^2 = \beta \alpha \beta^2$.  

Multiplying the equation on the right by $\beta$ and using $\beta^3 = \textrm{id}$, we get $\alpha \beta \alpha^2 \beta = \beta \alpha$.  On the other hand,
$\alpha \beta \alpha^2 \beta = \alpha \beta^3 \alpha = \alpha^2$ so combining the two equations we get $\beta \alpha = \alpha^2$ which implies $\alpha = \beta$, a contradiction.

\noindent
\textbf{Case 4:} $\alpha \beta \alpha \beta = \beta \alpha \beta \alpha$.

Multiply the equation on the left by $\beta^2$ to get $\beta^2 \alpha \beta \alpha \beta = \alpha \beta \alpha$ which can be 
rewritten as $\alpha^2 \beta^2 \alpha \beta = \alpha \beta \alpha$ using $\beta^2 \alpha = \alpha^2 \beta$.  
Replace $\beta^2 \alpha$ with $\alpha^2 \beta$ on the left hand side of $\alpha^2 \beta^2 \alpha \beta = \alpha \beta \alpha$ and 
cancel $\alpha$ to get $\beta^2 = \beta \alpha $ which implies $\beta = \alpha$.  


\noindent
\textbf{Case 5:}  $\alpha \beta^2 \alpha = \beta \alpha^2 \beta$.  

Using the relation $\beta^2 \alpha = \alpha^2 \beta$, we can rewrite this equation as 
$\alpha^3 \beta = \beta^3 \alpha$ which implies $\alpha = \beta$ since $\alpha^3 = \beta^3 = \textrm{id}$.

\end{proof}

\vspace{1em}

The set $\{\alpha , \beta \}$ is not a non-abelian $B_4$-set since $\alpha^2 \beta \beta = \beta^2 \alpha \beta$.   
The next theorem is a special case of a result of Odlyzko and Smith and we will use it in our construction.


\begin{theorem}[Odlyzko, Smith, \cite{OS}]\label{OS theorem}
For each prime $p$ with $p-1$ divisible by 4, there is a non-abelian group $G$ of order $4(p^4 - 1)$ and a non-abelian $B_4$-set $A \subset G$ with \begin{equation*}
|A| = \frac{1}{4}(p-1).
\end{equation*}   
\end{theorem}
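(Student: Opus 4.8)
The plan is to realise the group as a semidirect product coming from the Galois action on a quartic extension of $\mathbb{F}_p$, and to take for $A$ a Frobenius-twisted version of the Bose--Chowla construction restricted to a carefully chosen subset. Let $K=\mathbb{F}_{p^{4}}$ and let $\phi\colon K\to K$, $\phi(x)=x^{p}$, be the Frobenius automorphism, which generates $\mathrm{Gal}(K/\mathbb{F}_p)\cong\integers_4$. Set
\begin{equation*}
G=K^{*}\rtimes\langle\phi\rangle,\qquad (x,\phi^{i})(y,\phi^{j})=(x\,\phi^{i}(y),\phi^{i+j}),
\end{equation*}
so $|G|=4(p^{4}-1)$ and $G$ is non-abelian because $\phi$ acts nontrivially on $K^{*}$. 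Fix a generator $\theta$ of $K^{*}$ for which $1,\theta,\theta^{p},\theta^{p^{2}},\theta^{p^{3}}$ lie in sufficiently general position over $\mathbb{F}_p$ (for instance $\theta$ primitive with a normal basis of conjugates, which exists by the primitive normal basis theorem, translated if necessary). For a set $S\subseteq\mathbb{F}_p$ to be fixed below, put $A=\{(\theta+c,\phi):c\in S\}$. Since $\phi^{i}(\theta+c)=\theta^{p^{i}}+c$ for $c\in\mathbb{F}_p$, a product of four elements of $A$ equals $\bigl((\theta+c_1)(\theta^{p}+c_2)(\theta^{p^{2}}+c_3)(\theta^{p^{3}}+c_4),\ \mathrm{id}\bigr)$, and two such products agree in $G$ iff the four first coordinates agree. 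Hence $A$ is a non-abelian $B_4$-set precisely when the map
\begin{equation*}
\Phi(c_1,c_2,c_3,c_4)=(\theta+c_1)(\theta^{p}+c_2)(\theta^{p^{2}}+c_3)(\theta^{p^{3}}+c_4)
\end{equation*}
is injective on $S^{4}$, and the whole problem reduces to choosing $S$ with $|S|=(p-1)/4$ making $\Phi$ injective.

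First I would dispose of the easy collisions. Suppose $\Phi(c)=\Phi(c')$ (writing $c=(c_1,\dots,c_4)$, $c'=(c_1',\dots,c_4')$) and let $D=\{i:c_i\neq c_i'\}$; note each factor $\theta^{p^{i}}+c$ is nonzero since $\theta\notin\mathbb{F}_p$. If $|D|\le 1$, cancelling the three matching factors forces $c=c'$. If $|D|=2$, say $D=\{1,2\}$, then cancelling the common factors and expanding gives $(c_2-c_2')\theta+(c_1-c_1')\theta^{p}+(c_1c_2-c_1'c_2')=0$, a nontrivial $\mathbb{F}_p$-linear dependence among $1,\theta,\theta^{p}$; the analogous relation holds for every pair of positions, and all of them are excluded by the general-position hypothesis on $\theta$. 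Thus only collisions with $|D|\ge 3$ survive. Observe also $\Phi(c,c,c,c)=N_{K/\mathbb{F}_p}(\theta+c)=\mu(-c)$, where $\mu\in\mathbb{F}_p[X]$ is the minimal polynomial of $\theta$, so injectivity of $\Phi$ already requires $\mu(-\cdot)$ to be injective on $S$; since $\deg\mu=4$, each fibre of $c\mapsto\mu(-c)$ has at most four points, and therefore a set $S$ on which $\mu(-\cdot)$ is injective can be taken of size at least $\lceil p/4\rceil\ge(p-1)/4$. This is the origin of the bound in the theorem.

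The remaining step, which I expect to be the real obstacle, is to rule out collisions $\Phi(c)=\Phi(c')$ with $|D|\in\{3,4\}$ and $c\neq c'$. Expanding such an equality yields a relation $R(\theta)=0$ with $R\in\mathbb{F}_p[X]$ of degree as large as $p+p^{2}+p^{3}$, so --- in contrast with the classical Bose--Chowla argument --- one cannot conclude $R\equiv 0$ from $\deg\theta=4$; the identity merely forces $\mu\mid R$, which constrains the quadruples without excluding them outright. I would finish by bounding the number of ``bad'' pairs $\bigl((c_1,c_2,c_3,c_4),(c_1',c_2',c_3',c_4')\bigr)$ with $\Phi(c)=\Phi(c')$, $|D|\ge 3$ and $c\neq c'$, and then selecting $S$ of size $(p-1)/4$ on which $\mu(-\cdot)$ is injective and which meets no bad pair --- either by a counting/greedy argument, or by exploiting the explicit arithmetic of the Bose--Chowla set as Odlyzko and Smith do in \cite{OS}. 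With such an $S$, $A=\{(\theta+c,\phi):c\in S\}$ is a non-abelian $B_4$-set in $G$ with $|A|=|S|=(p-1)/4$, as required. Every step before the last is bookkeeping once the semidirect-product model and the generator $\theta$ are in place; it is quantifying and eliminating the high-degree collisions that does the real work and pins down the constant $(p-1)/4$.
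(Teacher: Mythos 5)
The paper does not actually prove this statement: it is quoted, as a special case, directly from Odlyzko and Smith \cite{OS} (hence the attribution in the theorem header, and the sentence preceding it, ``The next theorem is a special case of a result of Odlyzko and Smith''). So there is no internal proof to compare against, and the question is only whether your blind attempt is a complete proof. It is not. Your semidirect-product model $G=K^{*}\rtimes\langle\phi\rangle$ of order $4(p^4-1)$ is a plausible ambient group, and the reductions you give for $|D|\le 1$ (cancellation), $|D|=2$ (a three-term $\mathbb{F}_p$-linear relation among conjugates of $\theta$), and the diagonal identity $\Phi(c,c,c,c)=\mu(-c)$ are all correct. But you yourself identify the elimination of collisions with $|D|\in\{3,4\}$ as ``the real obstacle,'' and you leave it entirely as a sketch: ``bound the number of bad pairs \dots counting/greedy argument \dots or exploit the explicit arithmetic of the Bose--Chowla set as Odlyzko and Smith do.'' That step is precisely the content of the theorem. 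The observation that $\mu(-\cdot)$ must be injective on $S$ only shows $(p-1)/4$ is compatible with a necessary condition; it does not show a set of that size survives the $|D|\ge 3$ collisions, and a greedy argument would require a quantitative bound (none is offered) on how many $c'$ collide with a given $c$ once the polynomial $F-F'$ is merely required to be divisible by $\mu$ rather than to vanish. Finally, the hypothesis $4\mid p-1$ is never used in your argument, which strongly suggests the actual Odlyzko--Smith mechanism is not the one you have set up. As written, the proposal has a genuine gap at its central step.
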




Armed with Lemma~\ref{lemma 5.1}, Lemma~\ref{AB lemma}, and Theorem~\ref{OS theorem} it is easy to prove Theorem~\ref{lb 4}.  

Let $p$ be any prime with $p-1 $ divisible by 4.  By Theorem~\ref{OS theorem}, there is a group $G_1$ of order $4(p^4 - 1)$ and 
a non-abelian $B_4$-set $A_1 \subset G_1$ with $|A_1 | = \frac{1}{4}(p-1)$.  
Define the group $G$ to be the product group $G = G_1 \times H$.  Let $A = A_1 \times \{\alpha,\beta \}$.  Clearly 
$|G| = 12 \cdot 4 (p^4 -1)$, $|A| = \frac{1}{2} (p-1)$, and by Lemma~\ref{lemma 5.1}, $A$ is a non-abelian $B_4^+$-set in $G$.  






\section{Acknowledgement} 

The author would like to thank Jacques Verstra\"{e}te for helpful comments.  


\end{document}